\newcommand {\bel}[1]{\begin{align*}}
\newcommand {\eel}[1]{\end{align*}}
\newcommand {\bea}{\begin{eqnarray}}
\newcommand {\eea}{\end{eqnarray}}
\newcommand{\pr}{\mathbb{P}}
\newcommand{\E}{\mathbb{E}}
\newcommand{\R}{\mathbb{R}}
\newcommand{\Z}{\mathbb{Z}}
\newcommand{\G}{\mathbb{G}}
\newcommand{\T}{\mathbb{T}}
\newcommand{\M}{{\bf \mathcal{M}}}
\newcommand{\F}{\mathcal{\mathcal{F}}}
\newcommand{\mb}[1]{\mbox{\boldmath $#1$}}
\newcommand{\mbI}{\mb{I}}
\newcommand{\mbM}{\mb{M}}
\newcommand{\ignore}[1]{\relax}
\newcommand{\even}{\text{even}}
\newcommand{\odd}{\text{odd}}
\newtheorem{theorem}{Theorem}
\newcommand{\remark}{{\bf Remark : }}
\newtheorem{lemma}{Lemma}
\newtheorem{prop}{Proposition}
\newtheorem{coro}{Corollary}
\newtheorem{Defi}{Definition}
\newtheorem{Assumption}{Assumption}
\title{Sequential cavity method for computing free energy and surface pressure}
\author{
 {\sf David Gamarnik }
  \thanks{Operations Research Center, MIT, Cambridge, MA,  02139, e-mail: {\tt
gamarnik@mit.edu}}
\and
{\sf Dmitriy Katz} \thanks{Operations Research Center, MIT, Cambridge, MA,  02139, e-mail: {\tt dimdim@mit.edu}}
}
\begin{document}

\maketitle

\begin{abstract}
We propose a new method for the problems of computing free energy and surface pressure
for various statistical mechanics models on a lattice $\Z^d$. Our method is based on
representing the free energy and surface pressure in terms of certain marginal probabilities in a suitably modified
sublattice of $\Z^d$. Then recent deterministic algorithms for computing marginal probabilities are used
to obtain numerical estimates of the quantities of interest.
The method works under the assumption of Strong Spatial Mixing (SSP), which
is a form of a correlation decay.

We illustrate our method for the hard-core and monomer-dimer models, and
improve several earlier estimates. For example we show that the exponent of the
monomer-dimer coverings of $\Z^3$  belongs
to the interval $[0.78595,0.78599]$, improving  best previously known
estimate of (approximately)
$[0.7850,0.7862]$ obtained in~\cite{FriedlandPeled},\cite{FriedlandKropLundowMarkstrom}.
Moreover, we show that given a target additive error $\epsilon>0$, the computational
effort of our method for these two models is $(1/\epsilon)^{O(1)}$ \emph{both} for free energy and surface pressure.
In contrast, prior methods, such as transfer matrix method, require $\exp\big((1/\epsilon)^{O(1)}\big)$ computation effort.
\end{abstract}

\tableofcontents

\section{Introduction}
It is a classical fact in statistical physics that the logarithm of the partition function of a
general statistical mechanics model on
$[-n,n]^d\subset \Z^d$, appropriately rescaled,
has a well-defined limit as $n\rightarrow\infty$~\cite{GeorgyGibbsMeasure},\cite{SimonLatticeGases}.
This limit is called free energy or pressure (the difference is in normalizing constant).
Similarly, the limit of its first order correction,  \emph{surface pressure},  exists,
though such limit depends on the shape of the underlying finite box and the existing proofs
assume soft core interactions, $H<\infty$ (see Section~\ref{section:model})~\cite{SimonLatticeGases}.
It is a different matter to compute these limits, and this question interested researchers both in
the statistical physics and combinatorics communities.
In very special cases, free energy can be computed analytically.
The most widely known example is Fisher-Kasteleyn-Temperley's formula for  dimer model on $\Z^2$ \cite{Fisher},
\cite{KasteleynDimer},\cite{TemperleyFisher}. This formula was used in~\cite{KenyonOkounkovSheffield} for
obtaining complete parametrization of different Gibbs measures for the pure dimer model on $\Z^2$. The
formula, unfortunately, extends neither to a dimer model in other dimensions nor to the closely related
monomer-dimer model. Another example of an exactly solvable model is hard-core model on a hexagonal
lattice \cite{BaxterHexagon}.

Short of these special cases, the existing methods for computing free energy mostly rely on numerical approximations.
These include randomized methods
such as Monte-Carlo~\cite{JerrumSinclairHochbaumApproxAlgorithms},\cite{KenyonRandalWilsonPermanentPlanar}
and deterministic methods such as transfer matrices.
The Monte-Carlo method can be used to estimate free energy in finite graphs, say $[-n,n]^d$,
with some probabilistic approximation
guarantee, provided that some underlying Markov chain is rapidly mixing. One then has to relate finite graph to
infinite lattices to approximate free energy for an infinite lattice. This can be done since the rate of convergence
in the definition of the free energy is known to be $O(1/n)$ different from the log-partition function on
 $[-n,n]^d$, and the constant
in $O(\cdot)$ can be explicitly bounded. The drawback of this method is its dependence on the sampling error.
Transfer matrix method, on the other hand is a deterministic method and provides rigorous bounds on the free energy.
It is based on considering an infinite strip $[-n,n]^{d-1}\times \Z$ and then identifying the number of different ways
two configurations on $[-n,n]^{d-1}$ can match. One then constructs an  $[-n,n]^{d-1}$ by $[-n,n]^{d-1}$ matrix
and the spectral radius of this transfer matrix can be related to the growth rate of the partition function
on $[-n,n]^{d-1}\times [-N,N]$ as a function of $N$, namely the partition function on $[-n,n]^{d-1}\times \Z$.
Then by making $n$ sufficiently large, bounds on the free energy on the entire lattice $\Z^d$ can be obtained.
The construction of such transfer matrix  requires time $\exp(O(n^{d-1}))$ time. Since the convergence
rate of the free energy to its limit
wrt $n$ is $O(1/n)$, then in order to get a target additive error $\epsilon$, the transfer matrix method
requires time $\exp(O((1/\epsilon)^{d-1})$. Namely, this is time required to construct an $\epsilon$-length
interval containing the actual value of the free energy.
While, for low dimensions $d$,
 this is a substantial saving over a brute force method, which would simply
compute free energy on $[-n,n]^d$ for $n\ge 1/\epsilon$
and requires time $\exp(O((1/\epsilon)^{d})$, the method stops being effective
for larger $d$. Some additional computational savings can be achieved using underlying automorphisms
group structure, see Friedland and Peled~\cite{FriedlandPeled}, but we are not aware of any formal analysis of the computational
savings produced by this method. The complexity of computing surface pressure using the transfer matrix method further
increases to $\exp(O((1/\epsilon)^{2d-2})$, see Subsection~\ref{subsection:complexity}.

The transfer matrix method was used to obtain some of the best known bounds. For the hard-core model
(see Section~\ref{section:IS}) with activity $\lambda=1$, which corresponds to counting independent sets in $\Z^d$,
the exponent of the free energy is known to be in the range $[1.503047782,1.5035148]$, as
obtained by Calkin and Wilf~\cite{CalkinWilf}.
A far more accurate but non-rigorous estimate was obtained by Baxter~\cite{Baxter99}. Similarly, the
transfer matrix method was used to compute the free energy of the monomer-dimer (matchings)
model (see Section~\ref{section:matchings}).
The problem has a long history. Earlier studies include  Hammersley \cite{HammersleyMonteCarlo},
\cite{HammersleyLowerBound}, Hammersley and Menon~\cite{HammersleyMenon}, Baxter~\cite{Baxter68},
where some non-rigorous estimates and crude bounds were obtained. Recently
Friedland and Peled~\cite{FriedlandPeled} obtained rigorously a  range $0.6627989727\pm 0.0000000001$ for $d=2$
and $[0.7653,0.7863]$ for $d=3$ using the transfer matrix method. The lower bound was later tightened to
$0.7845$ using the Friedland-Tveberg inequality~\cite{FriedlandGurvits}.  This inequality
provides a bound for general regular graphs. This bound, while quite accurate for the case $\Z^3$,
is not improvable by running some numerical procedure longer or on a faster machine. A tighter
lower bound $0.7849602275$ was recently obtained using techniques related to the asymptotic matching
conjectures~\cite{FriedlandKropLundowMarkstrom}.
A bit earlier a similar
non-rigorous estimate $[0.7833,0.7861]$ was obtained in \cite{HuoLiangLiuBai}, by reduction from a permanent problem.
Exact values for monomer-dimer entropy on two dimensional infinite strip were obtained by Kong~\cite{kong}.
We are not aware of any computational estimates of surface pressure for these or of any related statistical
mechanics models.

In this paper we propose a completely new approach for the problem of computing numerically free energy and surface pressure.
Our approach takes advantage of the fact that some of these models, including the two models above, are in the so-called
uniqueness regime. Namely, the Gibbs measure on the infinite lattice is unique. This is an implication of
the Strong Spatial Mixing (SSM) property~\cite{BertoinMartinelliPeres}, see Subsection~\ref{subsection:SSM} for the definition.
This property asserts that the marginal probability
that a node $v$ attains a particular spin value $\sigma_v$
is asymptotically independent from the spin configurations for nodes $u$ which are far away from $v$. Our main theoretical
result is Theorem~\ref{theorem:MainResultGeneral} which provides a surprisingly simple representation of the free
energy in terms of such a marginal probability, provided that the SSM holds. This representation is particularly
easy to explain for the special case of hard-core model in two dimensions. Consider the subset $\Z^2_{\prec 0}$
of $\Z^2$ consisting of points $(v_1,v_2)$ with either $v_2<0$ or $v_2=0, v_1\le 0$. Our representation
theorem states that the free energy equals $\log(1/p^*)$ where $p^*$ is the probability that a random independent
set in $\Z^2_{\prec 0}$ contains the origin $(0,0)$. As the model satisfies SSM (see Section~\ref{section:IS})
this probability is well-defined. The idea of the proof is simple. Let $v_1\prec v_2\prec\cdots\prec v_{(2n+1)^2}$
be the lexicographic ordering of nodes in $[-n,n]^2$. That is $(v_1,v_2)\succ (u_1,u_2)$ iff $v_2>u_2$ or $v_2=u_2$ and
$v_1>u_1$.
Observe the following telescoping identity $Z^{-1}=\prod_{i=0}^{(2n+1)^2-1}Z(i)/Z(i+1)$, where
$Z(i)$ is the number of independents sets in the subgraph $\G_i$ of $[-n,n]^2$ when all nodes $v_j, j>i$ and incident
edges are removed, see Figure~\ref{fig:CavityIS}. The convention $Z(0)=1$ is used.
Observe that $Z(i)/Z(i+1)$ is the probability that a randomly chosen
independent set in $\G_i$ does not contain $v_{i+1}$. This probability is then approximately $p^*$ for "most" of
the nodes in $[-n,n]^2$. The required representation is then obtained by taking logarithms of both sides. Such representation
is called cavity method in statistical physics and has been used heavily for analyzing statistical models
on sparse random graphs, \cite{Aldous:assignment00},\cite{MezardIndSets2004},
\cite{MezardParisiCavity},\cite{MezardParisi}, where one can write certain recursive
distributional equations satisfied by cavity values. Here due to a particular sequencing of removed vertices,
we call our approach \emph{sequential cavity method}.

\begin{figure}
\begin{center}
\includegraphics[scale=.3]{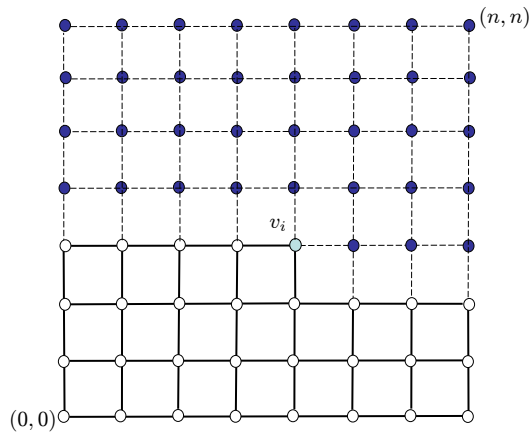}
\caption{$\G_i$ is obtained by removing all dark nodes}\label{fig:CavityIS}
\end{center}
\end{figure}

We establish a similar representation result for surface pressure for rectangular shapes again under the
assumption of (exponential) SSM. We note that our results provide an independent proof of the existence of the free energy
and surface pressure, not relying on the sub-additivity arguments, albeit in the special case of SSM.
Theorem~\ref{theorem:MainResultGeneral} reduces the problem of computing free energy and surface pressure
to the one of computing marginal probabilities,
and this is done using recent deterministic algorithms for computing such marginals in certain models satisfying
SSM property~\cite{weitzCounting},\cite{GamarnikKatz},\cite{BayatiGamarnikKatzNairTetali}. These results are
based on establishing even stronger property, namely correlation decay on a computation tree and lead to efficient
algorithms for computing such marginals. The original intention of these papers was constructing scalable
(polynomial time) approximation algorithms for computing partition functions on general finite graphs,
but an intermediate problem of computing approximately marginal probabilities is solved.
We have implemented these algorithm for the hard-core~\cite{weitzCounting}
and monomer-dimer~\cite{BayatiGamarnikKatzNairTetali} models in the special case of $\Z^d$. Using our approach
we improve existing bounds for these models. For example we show that the exponent of the free-energy
of the hard-core model in two dimensions is in the range $[1.503034,1.503058]$. While our lower
bound is weaker than the previous best known bound $1.503047782$~\cite{CalkinWilf} (see above), which is
already quite close to a believed estimate~\cite{Baxter99},
our upper bound improves the earlier best bound $1.5035148$~\cite{CalkinWilf}.
For the case of monomer-dimer model in three dimensions we obtained a range
$[0.78595,0.78599]$, substantially improving earlier bounds~\cite{FriedlandPeled},\cite{FriedlandGurvits},
\cite{FriedlandKropLundowMarkstrom} (see above). Further we show
that the numerical complexity of obtaining $\epsilon$-additive approximation using our approach
is $(1/\epsilon)^{O(1)}$ for \emph{both} free energy and surface pressure. The constant
in $O(1)$ may depend on model parameters and dimensions. For example, for the case of
monomer-dimer model, this constant is $Cd^{1\over 2}\log d$, where $C$ is some universal constant
(Proposition~\ref{prop:ComputationalComplexityM}).
This is a substantial
improvement over the computation efforts $\exp(O((1/\epsilon)^{d-1})$ and $\exp(O((1/\epsilon)^{2d-2})$
of the transfer matrix method for computing free energy and surface pressure.

The rest of the paper is organized as follows. In the following section we provide a necessary background on Gibbs measures
on general graphs and lattices, free energy, surface pressure, and define an important notion -- (exponential) Strong
Spatial Mixing. Our main theoretical result is Theorem~\ref{theorem:MainResultGeneral}
which represents free energy and surface pressure in terms of marginal probabilities. This result and its
several variations are stated and proven
in Section~\ref{section:general}. Sections~\ref{section:IS} and~\ref{section:matchings} are devoted to application
of Theorem~\ref{theorem:MainResultGeneral} and its variations to the problem of numerically estimating
free energy for hard-core and monomer-dimer models specifically. Additionally, in these sections
we compare the algorithmic complexity
of our method with the complexity of the  transfer matrix method. Concluding thoughts and open questions
are in Section~\ref{section:conclusions}.

\section{Model, assumptions and notations}\label{section:model}

\subsection{Finite and locally finite graphs}
Consider a finite or infinite locally finite simple undirected  graph $\G$
with node set $V$ and edge set $E\subset V\times V$. The locally-finite property means
every node is connected to only finitely many neighbors.
The graph is undirected and simple (no loops, no multiple edges).
For every $v,u\in V$ let
$d(u,v)$ be the length (number of edges) in the shortest path connecting $u$ and $v$.
For every node $v$, we let $N(v)$ stand for the set of neighbors of $v$: $N(v)=\{u: (u,v)\in E\}$.
We will write $N_{\G}(v)$ when we need to emphasize the underlying graph $\G$.
The quantity $\Delta=\Delta_{\G}\triangleq \max_v |N(v)|$ is called the degree of the graph.
For every $r\in \Z_+$ let  $B_r(v)=\{u:d(v,u)\le r\}$.
For every set $A\subset V$, let $B_r(A)=\cup_{v\in A}B_r(v)$. Thus locally finite property means
$|B_r(v)|<\infty$ for all $v,r$.
Given $A\subset V$, let $\partial A=\{v\in A: N(v)\cap A^c\ne \emptyset\}$ and let
$\partial_r A=\partial (B_r(A))$.
For every  subset $A\subset V$, we have an induced subgraph obtained by taking nodes in $A$
and all edges $(v,u)\in E\cap A^2$. The corresponding edge set of the induced graph is denoted by $E(A)$.

Let
$\R_+$ ($\Z_+$) denote the set of all non-negative real (integer) values. Le
$\R_{>0} (\Z_{>0})$ be the set of all positive real (integer) values.
Our main example of an infinite locally-finite graph
is the $d$-dimensional lattice $\Z^d$ with $V=\{(v_1,\ldots,v_d): v_i\in \Z$ and
$E=\{(v,u)\in V^2: \|v-u\|=1\}$, where $\|w\|=\sum_{1\le j\le d}|w_j|$.
We denote the origin $(0,0,\ldots,0)$ by $0$ for short.
Given a vector $a=(a_1,\ldots,a_d)\in \R_+^d$,
and $v=(v_1,\ldots,v_d)\in \Z^d$,
let $B_{an}(v)=\{u\in \Z^d: |u_j-v_j|\le a_jn, ~j=1,2,\ldots,d\}$.
In the special case $\G=\Z^d, v=0$, we write $B_r$ instead of $B_r(0)$ and $B_{an}$ instead of $B_{an}(0)$.
For each $j\le d, k\in \Z_+$, let $\Z^d_{j,k,+}=\{v\in\Z^d:v_j\le k\},\Z^d_{j,k,-}=\{v\in\Z^d:v_j\ge -k\}$.
Each of these is a $d$-dimensional half-plane in $\Z^d$.
Let $\prec$ denote a lexicographic full order on $\Z^d$. Namely, $v\prec u$ iff either $u_d>v_d$ or
$\exists k\in \{1,2,\ldots,d-1\}$ such that
$u_k>v_k$ and $v_j=u_j, j=k+1,\ldots,d$.
For every
$v\in \Z^d$ and $k=1,2,\ldots,d$, let
$\Z^d_{\prec v}=\{u\in\Z^d:u\prec v\}\cup \{v\}.$ Similarly we define $\Z^d_{j,k,+,\prec v}$ and $\Z^d_{j,k,-,\prec v}$
with $\Z^d_{j,k,+}$ and $\Z^d_{j,k,-}$ replacing $\Z^d$.

Throughout the paper we write $f(n)=O(g(n))$ and $f(n)=o(g(n)),~n\in \Z_+$ if $f(n)\le Cg(n)$,
respectively $f(n)/g(n)\rightarrow 0$, for all $n$,
for some constant $C$. This constant may in general depend on model parameters
such as $H,h$ (see the next section) or dimension $d$.
However, in some places the constant is universal, namely independent from any model parameters. We will explicitly
say so if this is the case.

\subsection{Gibbs measures}
Consider a finite set of spin values $\chi=\{s_1,\ldots,s_q\}$, a Hamiltonian
function $H:\chi^2\rightarrow \R\cup \{\infty\}$, and an
external field $h:\chi\rightarrow\R$.
Given a graph $\G=(V,E)$ we consider the associated spin configuration space $\Omega=\chi^{|V|}$ equipped
with product $\sigma$-field $\F$. If the graph $\G$ is finite, a
probability measure $\pr$ on $(\Omega,\F)$ is defined to be Gibbs measure
if for every spin assignment $(s_v)\in \chi^V$
\begin{align}
&\pr\left(\sigma_v=s_v,~\forall~ v\in V\right)
=Z^{-1}\exp\big(-\sum_{(v,u)\in E} H(s_v,s_u)-\sum_{v\in V}h(s_v)\big), \label{eq:Gibbs}
\end{align}
where $Z$ is the normalizing partition function:
\begin{align*}
Z=\sum_{(s_v)\in\chi^{|V|}}\exp\big(-\sum_{(v,u)\in E} H(s_v,s_u)-\sum_{v\in V}h(s_v)\big).
\end{align*}
We will often write $\pr_{\G}$ and $Z_{\G}$ in order to emphasize the underlying graph.
The case $H(a,a')=\infty$ corresponds to a hard-core constraint prohibiting assigning $a\in\chi$ and $a'\in
\chi$ to neighbors.
The possibility of such hard-core constraints is important for us when we discuss the problems of counting
independent sets and matchings.

When $\G$ is infinite, a probability measure $\pr(\cdot)$ is defined to be Gibbs measure, if it
satisfies the following spatial Markovian property. For every finite $A\subset V$ and every
spin assignment $(s_v)\in \chi^{|\partial A|}$ on the boundary $\partial A$ of $A$,
the conditional probability measure $\pr(\cdot|(s_v))$ on the finite graph $(V(A),E(A))$ induced by $A$
is a Gibbs measure with the same $H$ as the original graph and external field $h'$ given as
$h'_v(a)=h(a)+\sum_{u\in N(v)\setminus A}h_u(s_u)$ for every $a\in \chi$. Here every node
$u\in N(v)\setminus A$ belongs to $\partial A$ and thus its spin value $s_u$ is well defined.
This is called spatial Markovian property of Gibbs measure.
One can construct Gibbs measures
as a weak limit of Gibbs measures on cylinder sets in $\F$ obtained from finite induced subgraphs
 of $\G$, see \cite{SimonLatticeGases},\cite{GeorgyGibbsMeasure}
for details. Generally there are multiple Gibbs measures and the
space of Gibbs measures of $\G$ is denoted by $\M$ or $\M_\G$. We say that the model $(\G,H,h)$ is in the uniqueness
regime if $\M$ consists of a unique measure $\M=\{\pr\}$. Most of the results in this paper correspond to the uniqueness case,
and more specifically to the case of Strong Spatial Mixing (SSM) defined below.
Given a model $(\G,H,h)$ and a subset $A\subset V$, we have a naturally defined induced submodel on the
the induced subgraph $\G(A)=(V(A),E(A))$, given by the same $\chi,H$ and $h$ (but different partition function).
In order to emphasize the underlying subgraph $\G$ we write,
with abuse of notation,
$\Z_{A}$ and $\pr_{A}$ for the partition function and the  Gibbs measure on the subsystem
$(\G(A),H,h)$, when it is unique.
By default we drop the subscript when the underlying graph is the entire lattice $\Z^d$.

The definitions above is "node" centered: the spins are associated with nodes of a graph. In order to study the monomer-dimer
model we need to consider a similar "edge" model where spins are associated with edges. Thus given a finite graph $\G=(V,E)$,
and a finite set of spin values $\chi$, we consider a probability space $\Omega=\chi^{|E|}$. Then (\ref{eq:Gibbs})
is restated as follows. Write $e\sim e'$ if edges $e,e'$ are distinct and incident (share a node).
For every spin assignment $(s_e)\in \chi^{|E|}$ we assign probability measure
\begin{align}
&\pr\left(\sigma_e=s_e,~\forall~ e\in E\right)
=Z^{-1}\exp\big(-\sum_{e\sim e'\in E} H(s_e,s_{e'})-\sum_{e\in E}h(s_e)\big), \label{eq:GibbsEdges}
\end{align}
It is clear that "edge" model can be reduced to "node" model by considering a line graph of $\G$: the nodes of this graph
are edges of $\G$ and two nodes $e,e'$ form an edge in the line graph if and only if  $e\sim e'$.
For simplicity, however, we prefer not to switch to the line graph model.

\subsection{Free energy, pressure and surface pressure}
Given  $\G=\Z^d$, Hamiltonian $H$ and an external field $h$,  consider an arbitrary infinite
sequence of finite subsets $0\in \Lambda_1\subset\Lambda_2\subset\cdots\subset \Z^d$, such that the sequence
$r_n\triangleq\max\{r: B_r\subset \Lambda_i\}$ diverges to infinity as $n\rightarrow\infty$. Consider
the corresponding sequence of Gibbs measures $\pr_{\Lambda_n}$ and partition functions $Z_{\Lambda_n}$
on the graphs induced by $\Lambda_n$.
It follows from
sub-additivity property of partition functions that the limit
\begin{align}
\mathcal{P}(d,H,h)\triangleq\lim_{n\rightarrow\infty}{\log Z_{\Lambda_n}\over |\Lambda_n|} \label{eq:pressure}
\end{align}
exists and is independent from the choice of the sequence of subsets~\cite{SimonLatticeGases}, \cite{GeorgyGibbsMeasure}.
This quantity is called \emph{pressure}. Given a positive $\beta>0$ one usually also considers limits
$\lim_{n\rightarrow\infty}\log Z_{\Lambda_n}/(\beta |\Lambda_n|)$ where $H$ and $h$ are replaced by $\beta H$ and
$\beta h$, respectively. The corresponding limit is called \emph{free energy}. For our purposes, this difference
is insubstantial, as  it is just a matter
of redefining $H,h$ and changing the normalization. We will mostly use the term \emph{free energy}.

The first order correction to the limit (\ref{eq:pressure}) is known as \emph{surface pressure} and
is defined as follows. Unlike pressure, this quantity  is "shape" dependent.
In this paper we only consider the case of the rectangular shape, although our results can extended to more complicated
shapes as well. Given a vector $a=(a_i)\in \R^d_{>0}$,
let  $A_j(a)\triangleq\prod_{k\ne j}(2a_k), ~j=1,2,\ldots,d,$ and let $A(a)=\sum_{j\le d}A_j(a)$.
Observe that surface area of $B_{an}$ (the number of boundary nodes in $B_{an}$) is $A(a)n^{d-1}+o(n^{d-1})$.
The surface pressure is defined as
\begin{align}
s\mathcal{P}&(d,H,h,a)=\notag\\
&=\lim_{n\rightarrow\infty} A^{-1}(a)n^{-d+1}\left(\log Z_{B_{an}(0)}-
n^d\mathcal{P}(d,H,h)\prod_{1\le i\le d}(2a_i)\right). \label{eq:surfacepressure}
\end{align}
The surface pressure is interpreted as the first order correction of the free energy, scaled by the
surface area. The existence of this limit is a classical fact~\cite{SimonLatticeGases} for the case
$\max_{s,s'\in\chi}H(s,s')<\infty$. The case hard-core case $H=\infty$ is trickier and we are not aware
of results on the existence of the limit for this case. Our proofs however imply the existence of the limit
under the assumption of SSM, see below.

\subsection{Strong Spatial Mixing}\label{subsection:SSM}
The following technical assumption is needed for our analysis.
The assumption essentially says that there is always a choice of a spin value $s^*\in\chi$
which allows any other choice of spin values for its neighbors without making the Hamiltonian infinite.
\begin{Assumption}\label{assumption:s*}
There exist $s^*\in \chi$ such that  $\max_{s\in\chi} |H(s,s^*)|<\infty$.
\end{Assumption}
\noindent
The assumption  implies that under any Gibbs measure, for every node $v$
\begin{align}\label{eq:c^*}
c^*\triangleq \min_{(s_u)}\pr(\sigma_v=s^*|\sigma_u=s_u, \forall u\in N(v))>0
\end{align}
where the minimum is over all possible spin assignments
$(s_u)\in \chi^{|N(v)|}$ of the neighbors of $v$.
We now introduce the Strong Spatial Mixing assumption,
which, in particular, implies the uniqueness of the Gibbs measure on an infinite graph.
We first informally discuss this notion. Loosely speaking, a model $(\G,H,h)$ exhibits just \emph{spatial mixing}
if for every finite set $A\subset \G$ the joint probability law for spins in $A$ is asymptotically independent
from the values of spins which are far away  from $A$. A form of this condition is known
to be equivalent to the uniqueness of the Gibbs measure. Instead, a strong spatial mixing means the same
except that values for any other subset $B\subset \Z^d$ are allowed to be fixed. In other words, strong spatial
mixing is spatial mixing, but for the \emph{reduced} model on $V\setminus B$ when values of spins in $B$ are fixed
and values of the external field $h$ on the remaining graph are appropriately modified. Strong spatial mixing is strictly stronger
than spatial mixing (hence the separate definition), see~\cite{BertoinMartinelliPeres}
for counterexamples and discussion.

We now introduce the definition formally.
\begin{Defi}\label{definition:SSM}
$(\G,H,h)$ satisfies the Strong Spatial Mixing property if for every finite set $X\subset V(\G)$,
possibly infinite set $Y\in V(\G)$
there exists a function $R(r)$ satisfying $\lim_{r\rightarrow \infty}R(r)=0$, such that
for every positive integer $r$
\begin{align*}
\max
\Big|&\pr(\sigma_v=s^1_v,v\in X|\sigma_v=s^2_v,v\in Y; \sigma_v=s^3_v,v\in\partial_r X) \\
&-\pr(\sigma_v=s^1_v,v\in X|\sigma_v=s^2_v,v\in Y; \sigma_v=s^4_v,v\in\partial_r X) \Big| \le R(r),
\end{align*}
where the maximum is over all possible spin assignments $(s^1_v)\in \chi^{|X|},(s^2_v)\in \chi^{|Y|},
(s^3_v),(s^4_v)\in \chi^{|\partial_r X|}$.
$(\G,H,h)$ satisfies exponential strong spatial mixing if
there exist $\kappa,\gamma>0$ such that $R(r)\le \kappa\exp(-\gamma r)$ for all  $r\ge 0$.
\end{Defi}

It is known that if $(\G,H,h)$ exhibits SSM, then the Gibbs measure is  unique:
$\mathcal{M}=\{\pr\}$,~\cite{BertoinMartinelliPeres}.

\section{Sequential cavity method}\label{section:general}
In this section we present our main theoretical result: the representation of the free energy and surface pressure
on $\Z^d$ in terms of some conditional marginal probabilities $\pr(\sigma_0=s|\cdot)$ defined on
suitably modified subsets of $\Z^d,\Z_{j,k,+},\Z_{j,k,-}$. The idea is to sequentially remove nodes from a
rectangle $B_{an}$ one by one and observe that the log-partition function can be written in terms of
the log-marginal probabilities (cavity) of the removed nodes.

\subsection{Representation theorem}
Given a graph $\G=(V,E)$ with some full order $\succ$, a node $v$ and $s\in \chi$,
let $\mathcal{E}_{v,s}$ denote the event "$\sigma_u=s$ for all $u\succ v, u\in V$".
By convention we assume that $\mathcal{E}_{v,s}$ is the full event $\Omega$, if the set $u\succ v, u\in V$ is empty.
In the special case when $\G=\Z^d, v=0$ and $\succ$ is the lexicographic order,
this event is denoted by $\mathcal{E}_s$, see Figure~\ref{fig:CavityGeneral} for the case $d=2$.
We remind the reader, that we drop the
subscripts in $\pr_{\G},Z_{\G}$ when the underlying graph is $\Z^d$ and SSM holds guaranteeing uniqueness
of the Gibbs measure.

\begin{theorem}\label{theorem:MainResultGeneral}
Suppose  $(\Z^d,H,h)$ satisfies the SSM property and the Assumption~\ref{assumption:s*} holds.
Then
\begin{align}\label{eq:PressureGeneral}
\mathcal{P}(d,H,h)=-\log \pr(\sigma_{0}=s^*|\mathcal{E}_{s^*})-dH(s^*,s^*)-h(s^*).
\end{align}
If, in addition, SSM is exponential, then for every $a=(a_j)\in \R_{>0}^d$
\begin{align}\label{eq:SurfacePressureGeneral}
s\mathcal{P}(d,H,h,a)&=-H(s^*,s^*)+ \\
&\sum_{1\le j\le d}{A_j(a)\over A(a)}\sum_{k=0}^\infty\Big(2\log \pr(\sigma_{0}=s^*|\mathcal{E}_{s^*})-
\log \pr_{\Z^d_{j,k,+}}(\sigma_{0}=s^*|\mathcal{E}_{s^*}) \notag\\
&-\log \pr_{\Z^d_{j,-k,-}}(\sigma_{0}=s^*|\mathcal{E}_{s^*})\Big) \notag.
\end{align}
Moreover, the infinite sum is convergent.
\end{theorem}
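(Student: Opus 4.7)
The plan is to exploit the identity
\[
\log Z_{\Lambda_n} \;=\; -\log \pr_{\Lambda_n}\bigl(\sigma_v = s^*,\forall v\in\Lambda_n\bigr)\;-\;|E(\Lambda_n)|\,H(s^*,s^*)\;-\;|V(\Lambda_n)|\,h(s^*),
\]
which follows from evaluating the Gibbs weight at the all-$s^*$ configuration and rearranging; Assumption~\ref{assumption:s*} guarantees this weight is strictly positive. I would then enumerate the nodes of $\Lambda_n$ in reverse lexicographic order and apply the chain rule:
\[
-\log \pr_{\Lambda_n}\bigl(\sigma_v=s^*,\forall v\bigr) \;=\; \sum_{v\in \Lambda_n}\bigl[-\log \pr_{\Lambda_n}\bigl(\sigma_v=s^*\,\bigm|\,\sigma_u=s^*, u\succ v, u\in\Lambda_n\bigr)\bigr].
\]
This is the ``sequential cavity'' identity: each factor is a one-point marginal in the Gibbs measure with modified external field on the boundary against the spins already set to $s^*$.

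For the pressure formula (\ref{eq:PressureGeneral}), I take $\Lambda_n=B_n$ and split the sum over $v\in B_n$ into bulk nodes ($d(v,\partial B_n)$ large) and a boundary layer of cardinality $O(n^{d-1})$. By translation invariance and SSM, for a bulk node $v$ the conditional marginal is within $R(d(v,\partial B_n))$ of $\pr(\sigma_0=s^*\mid \mathcal{E}_{s^*})$, where $\mathcal{E}_{s^*}$ denotes $\sigma_u=s^*$ on the lexicographic ``future'' in $\Z^d$. Since $|E(B_n)|/|B_n|\to d$ and the boundary layer contributes $o(|B_n|)$ after dividing by $|B_n|$, taking $n\to\infty$ yields (\ref{eq:PressureGeneral}). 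Note that this argument actually gives an independent existence proof of $\mathcal{P}$ not relying on subadditivity.

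For (\ref{eq:SurfacePressureGeneral}) the strategy is the same but with boundary corrections tracked to order $n^{d-1}$. I take $\Lambda_n=B_{an}$ and classify each node $v$ by its minimal distance to the faces. A node at distance $k$ from the face $v_j=a_j n$ and far from every other face has, after translating $v$ to the origin, a ``future'' in $B_{an}-v$ that locally agrees with $\Z^d_{j,k,+}$; by exponential SSM the cavity factor differs from $\log \pr_{\Z^d_{j,k,+}}(\sigma_0=s^*\mid\mathcal{E}_{s^*})$ by $O(\kappa e^{-\gamma \min\{\text{distance to other faces}\}})$, and symmetrically for the opposite face using $\Z^d_{j,k,-}$. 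Nodes close to two or more faces (edges and corners of $B_{an}$) number only $O(n^{d-2})$ and contribute negligibly. The count of nodes at distance exactly $k$ from the $\pm j$-face is $A_j(a)n^{d-1}+O(n^{d-2})$, and an elementary computation gives $d|B_{an}|-|E(B_{an})|=A(a)n^{d-1}+O(n^{d-2})$ and $|B_{an}|-n^d\prod_i(2a_i)=A(a)n^{d-1}+O(n^{d-2})$. Assembling these three boundary contributions into $\log Z_{B_{an}} - n^d\mathcal{P}\prod_i(2a_i)$, dividing by $A(a)n^{d-1}$ and letting $n\to\infty$ produces the stated expression. Convergence of the inner series over $k$ follows from exponential SSM together with the lower bound $p^*\ge c^*>0$ from~(\ref{eq:c^*}), which makes $\log$ Lipschitz and gives summands bounded by $C\kappa e^{-\gamma k}$.

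The main obstacle is the bookkeeping in the surface-pressure step: one must simultaneously control (i)~the interchange of the limit $n\to\infty$ with the infinite sum over $k$, (ii)~the passage from finite-volume marginals in $B_{an}$ to infinite-volume marginals on the half-spaces $\Z^d_{j,k,\pm}$ uniformly in $k$, and (iii)~the subleading terms from corners/edges and from $|B_{an}|$ vs.\ $n^d\prod(2a_i)$. All three are handled by iterating the exponential SSM bound, choosing a buffer scale $r_n\to\infty$ slowly enough that $|B_{an}|R(r_n)=o(n^{d-1})$ while the shell of width $r_n$ around each face still has volume $O(r_n n^{d-1})$, whose correction is absorbed into the tail $\sum_{k\ge r_n}\kappa e^{-\gamma k}\to 0$. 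This also justifies the implicit claim that under exponential SSM the limit defining $s\mathcal{P}$ exists even in the hard-core case $H=\infty$, for which the classical proof in~\cite{SimonLatticeGases} does not apply.
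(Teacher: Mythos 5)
Your proposal is correct and follows essentially the same route as the paper: the all-$s^*$ evaluation of the Gibbs weight, the telescoping (sequential cavity) factorization along the lexicographic order, the bulk/boundary split controlled by SSM and the uniform lower bound $c^*$ for the pressure, and the distance-to-face classification with an exponential-SSM buffer, half-space comparison, and tail-sum interchange for the surface pressure. The only (minor) divergence is that you explicitly track the $|B_{an}|$ versus $n^d\prod_i(2a_i)$ discrepancy as a separate $A(a)n^{d-1}$ contribution, a bookkeeping point the paper's write-up passes over more quickly.
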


\remark We stress that our proof does not rely on the existence of a limit in (\ref{eq:pressure}) and thus
provides an independent proof for it, albeit in the restricted case of SSM.
Preempting the formal discussion, the convergence of the infinite sum in (\ref{eq:SurfacePressureGeneral})
is an almost immediate consequence of exponential SSM, which gives
\begin{align*}
|\pr_{\Z^d_{j,k,+}}(\sigma_{0}=s^*|\mathcal{E}_{s^*})-\pr(\sigma_{0}=s^*|\mathcal{E}_{s^*})|
\le \kappa\exp(-\gamma k).
\end{align*}
A similar conclusion for difference of the logarithms is obtained then using Assumption~\ref{assumption:s*}.
This observation implies that $\epsilon$-additive approximation of this infinite
sum is obtained by considering the partial sum with first $O(\log(1/\epsilon))$ terms.
As we shall discuss
in the following subsection,
the main import from this observation is that the overhead of computing surface pressure vs free energy is very small.

\begin{figure}
\begin{center}
\includegraphics[scale=.3]{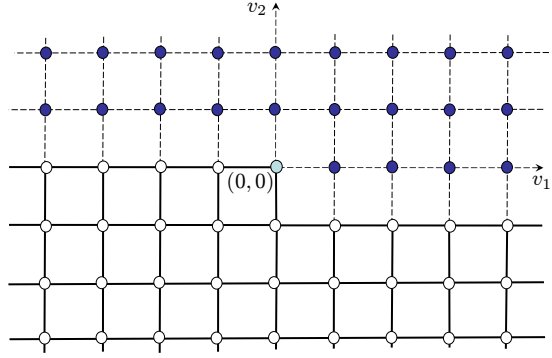}
\caption{Event $\mathcal{E}_{s^*}$ on $\Z^2$. Every dark node is assigned spin $s^*$}\label{fig:CavityGeneral}
\end{center}
\end{figure}

\begin{proof}
We first prove (\ref{eq:PressureGeneral}).
Let  $v_1\succ v_2\succ \ldots\succ v_{|B_n|}$ be the labeling of nodes in $B_n$ according to the
lexicographic order. Note that the number of edges in $B_n$ is  $d|B_n|+o(|B_n|)$. We have
\begin{align*}
\pr_{B_n}&(\sigma_{v}=s^*, \forall v\in B_n)\\
&=Z_{B_n}^{-1}\exp\big(-d|B_n|H(s^*,s^*)-o(|B_n|)H(s^*,s^*)-|B_n|h(s^*)\big),
\end{align*}
from which we infer that
\begin{align}
Z_{B_n}^{-1}
&=\exp\big(d|B_n|H(s^*,s^*)+o(|B_n|)H(s^*,s^*)+|B_n|h(s^*)\big)\times \notag\\
&\times\pr_{B_n}(\sigma_{v}=s^*, \forall v\in B_n). \label{eq:ZN-1}
\end{align}
On the other hand, by telescoping property
\begin{align}\label{eq:telescope}
\pr_{B_n}(\sigma_{v}=s^*, \forall v\in B_n)&=
\prod_{v\in B_n}\pr_{B_n}(\sigma_{v}=s^*|\sigma_{u}=s^*,~\forall~ u\succ v, u\in B_n)\\
&=\prod_{v\in B_n}\pr_{B_n}(\sigma_{v}=s^*|\mathcal{E}_{v,s^*}) \notag
\end{align}

Fix $\epsilon>0$ and find $r=r(\epsilon)$ such that according to Definition~\ref{definition:SSM}, $R(r)<\epsilon$
for $X=\{0\}, Y=\{u\succ 0\}$.
Let $B^o=\{v\in B_n: B_r(v)\subset B_n\}$.
Observe that $|B^o|/|B_n|\rightarrow 1$ as $n\rightarrow\infty$.
By the choice of $r$ we have for every $v\in B^o$
\begin{align}\label{eq:MarginalFiniteInfinite}
\Big|\pr(\sigma_{v}=s^*|\mathcal{E}_{v,s^*})
-\pr_{B_n}(\sigma_{v}=s^*|\mathcal{E}_{v,s^*})\Big|\le \epsilon.
\end{align}
By translation invariance we have
\begin{align*}
\pr(\sigma_{v}=s^*|\mathcal{E}_{v,s^*})
=
\pr(\sigma_{0}=s^*|\mathcal{E}_{s^*})
\end{align*}
For every $v\in B_n\setminus B^o$ we have the  generic lower bound (\ref{eq:c^*})
\begin{align}\label{eq:lowerNotinBo}
\pr_{B_n}(\sigma_{v}=s^*|\mathcal{E}_{v,s^*})
\ge c^*
\end{align}
which is strictly positive by Assumption~\ref{assumption:s*}.
A similar inequality with the same constant holds for
$\pr(\sigma_{0}=s^*|\mathcal{E}_{s^*})$.
Combining this with (\ref{eq:ZN-1}) and (\ref{eq:telescope}) we obtain
\begin{align*}
Z_{B_n}^{-1}&=\exp\big(d|B_n|H(s^*,s^*)+o(|B_n|)H(s^*,s^*)+|B_n|h(s^*)\big)\times\\
&\times\prod_{v\in B^o}\pr_{B_n}(\sigma_{v}=s^*|\mathcal{E}_{v,s^*})
\prod_{v\in B_n\setminus B^o}\pr_{B_n}(\sigma_{v}=s^*|\mathcal{E}_{v,s^*}) \\
&\ge\exp\big(d|B_n|H(s^*,s^*)+o(|B_n|)H(s^*,s^*)+|B_n|h(s^*)\big) \\
&\big(\pr(\sigma_{0}=s^*|\mathcal{E}_{s^*})-\epsilon)^{|B^o|}
(c^*)^{|B_n\setminus B^o|}
\end{align*}
Since $|B^o|/|B_n|\rightarrow 1$ as $n\rightarrow\infty$ and $c^*>0$, then we obtain
\begin{align*}
\liminf_{n\rightarrow\infty}{\log Z_{B_n}^{-1}\over |B_n|}&\ge dH(s^*,s^*)+h(s^*)+
\log(\pr(\sigma_{0}=s^*|\mathcal{E}_{s^*})-\epsilon)
\end{align*}
Recalling  $\pr(\sigma_{0}=s^*|\mathcal{E}_{s^*})\ge c^*>0$,
since $\epsilon$ was arbitrary, we conclude
\begin{align*}
\liminf_{n\rightarrow\infty}{\log Z_{B_n}^{-1}\over |B_n|}&\ge H(s^*,s^*)d+h(s^*)+
\log \pr(\sigma_{0}=s^*|\mathcal{E}_{s^*}).
\end{align*}
Similarly we show
\begin{align*}
\limsup_{n\rightarrow\infty}{\log Z_{B_n}^{-1}\over |B_n|}&\le H(s^*,s^*)d+h(s^*)+
\log \pr(\sigma_{0}=s^*|\mathcal{E}_{s^*}),
\end{align*}
where for the case $v\in B_n\setminus B_n^o$ we use a trivial inequality
$\pr_{B_n}(\sigma_{v}=s^*|\mathcal{E}_{v,s^*})\le 1$
in place of (\ref{eq:lowerNotinBo}). We obtain
\begin{align}\label{eq:limsupinf}
\lim_{n\rightarrow\infty}{\log Z_{B_n}^{-1}\over |B_n|}=H(s^*,s^*)d+h(s^*)+\log \pr(\sigma_{0}=s^*|\mathcal{E}_{s^*}),
\end{align}
This concludes the proof of (\ref{eq:PressureGeneral}).

Now we establish (\ref{eq:SurfacePressureGeneral}). Thus consider a rectangle $B_{an}$.
The proof is based on a more refined estimates for the
elements of the telescoping product (\ref{eq:telescope}) with $B_{an}$ replacing $B_n$.
We begin by refinement of (\ref{eq:ZN-1}).
The number of edges in $B_{an}$ is
$d|B_{an}|-A(a)n^{d-1}+o(n^{d-1})$. Repeating the derivation of (\ref{eq:ZN-1}) and (\ref{eq:telescope}), we obtain
\begin{align}
Z_{B_{an}}^{-1}
&=\exp\big(d|B_{an}|H(s^*,s^*)-A(a)n^{d-1}H(s^*,s^*)+h(s^*)|B_{an}|+o(n^{d-1})\big) \notag\\
&\times \prod_{v\in B_{an}}\pr_{B_{an}}(\sigma_{v}=s^*|\mathcal{E}_{v,s^*}) \label{eq:ZN-1a}
\end{align}
Let
\begin{align*}
B^o_{an}&=\Big\{v\in B_{an}: |a_in-v_i|,|a_in+v_i|\ge C\log n, 1\le i\le d \Big\},
\end{align*}
and for each $j=1,\ldots,d$ and $0\le k< C\log n$ let
\begin{align*}
B_{an,j,k,+}&=\Big\{v\in B_{an}:v_j=\lfloor a_jn\rfloor-k; |a_in-v_i|,|a_in+v_i|\ge C\log n, i\ne j\Big\},\\
B_{an,j,k,-}&=\Big\{v\in B_{an}:v_j=-\lfloor a_jn\rfloor +k; |a_in-v_i|,|a_in+v_i|\ge C\log n, i\ne j\Big\}.
\end{align*}
Here $C>0$ is a large yet unspecified constant.
In other words, $B_{an,j,k,+}\cup B_{an,j,k,-}$ consists of nodes
which have distance $k$ from from the boundary of $B_{an}$ in the $j$ coordinate, but have distance
at least $C\log n$ in all the other coordinates $i\ne j$.
Observe that the sets $B_{an,j,k,+}, B_{an,j,k,-}$ are non-intersecting, for sufficiently large $n$.
Observe also that
\begin{align}\label{eq:B0a}
\Big|B_{an}&\setminus \big(B^o_{an}\cup \cup_{1\le j\le d,0\le k\le C\log n}
(B_{an,j,k,+}\cup B_{an,j,k,-})\big)\Big| \notag\\
&=O(n^{d-2}\log^2 n ).
\end{align}
For every $v\in B^o_{an}$ we have by exponential SSM
\begin{align*}
\big|\pr(\sigma_{v}=s^*|\mathcal{E}_{v,s^*})
-\pr_{B_{an}}(\sigma_{v}=s^*|\mathcal{E}_{v,s^*})\big|\le \kappa\exp(-\gamma C\log n)=O(n^{-\gamma C}).
\end{align*}
By translation invariance we have
\begin{align*}
\pr(\sigma_{v}=s^*|\mathcal{E}_{v,s^*})=
\pr(\sigma_{0}=s^*|\mathcal{E}_{s^*})
\end{align*}
Recalling $\pr(\sigma_{0}=s^*|\mathcal{E}_{s^*})\ge c^*$, we obtain using the Taylor expansion,
\begin{align}\label{eq:InternalNodes}
\big|\log\pr_{B_{an}}(\sigma_{v}=s^*|\mathcal{E}_{v,s^*})-\log\pr(\sigma_{0}=s^*|\mathcal{E}_{s^*})\big|=O(n^{-\gamma C}).
\end{align}

Fix $v\in B_{an,j,k,+}$. Since $v_j=\lfloor a_jn\rfloor-k$ and for every $i\ne j$,
$v_i$ is at least $C\log n$ away from the boundary of $B_{an}$,
then by the exponential SSM property we have
\begin{align*}
\Big|\pr_{B_{an}}(\sigma_{v}=s^*|\mathcal{E}_{v,s^*})-
\pr_{\Z^d_{j,\lfloor a_jn\rfloor,+}}(\sigma_{v}=s^*|\mathcal{E}_{v,s^*})
\Big|\le \kappa\exp(-\gamma C\log n)=O(n^{-\gamma C}),
\end{align*}
(see Figure~\ref{fig:Cavity-Level-K} for the two-dimensional illustration with $j=2$).
Recalling $v_j=\lfloor a_jn\rfloor-k$ and using translation invariance we have
\begin{align*}
\pr_{\Z^d_{j,\lfloor a_jn\rfloor,+}}(\sigma_{v}=s^*|\mathcal{E}_{v,s^*})=
\pr_{\Z^d_{j,k,+}}(\sigma_{0}=s^*|\mathcal{E}_{s^*})
\end{align*}

\begin{figure}
\begin{center}
\includegraphics[scale=.4]{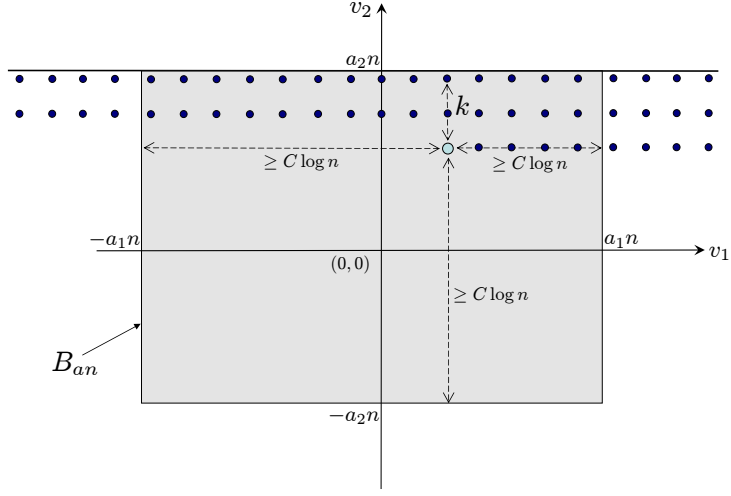}
\caption{Event $\mathcal{E}_{s^*}$ on $\Z^2_{2,\lfloor a_2n\rfloor,+}$.
Every dark node is assigned spin $s^*$}\label{fig:Cavity-Level-K}
\end{center}
\end{figure}
Using again the Taylor expansion, we obtain
\begin{align}\label{eq:B+approx}
\Big|\log\pr_{B_{an}}(\sigma_{v}=s^*|\mathcal{E}_{v,s^*})-
\log\pr_{\Z^d_{j,k,+}}(\sigma_{0}=s^*|\mathcal{E}_{s^*})\Big|=O(n^{-\gamma C}).
\end{align}
Similarly, if $v\in B_{an,j,k,-}$, then
\begin{align*}
\Big|\pr_{B_{an}}(\sigma_{v}=s^*|\mathcal{E}_{v,s^*})-
\pr_{\Z^d_{j,-\lfloor a_jn\rfloor,-}}(\sigma_{v}=s^*|\mathcal{E}_{v,s^*})
\Big|\le \exp(-\gamma C\log n)=O(n^{-\gamma C}).
\end{align*}
By translation invariance and since $v_j=-\lfloor a_jn\rfloor+k$, then
\begin{align*}
\pr_{\Z^d_{j,-\lfloor a_jn\rfloor,-}}(\sigma_{v}=s^*|\mathcal{E}_{v,s^*})=
\pr_{\Z^d_{j,-k,-}}(\sigma_{0}=s^*|\mathcal{E}_{s^*}),
\end{align*}
and again applying Taylor expansion
\begin{align}\label{eq:B-approx}
\Big|\log\pr_{B_{an}}(\sigma_{v}=s^*|\mathcal{E}_{v,s^*})-
\log\pr_{\Z^d_{j,-k,-}}(\sigma_{0}=s^*|\mathcal{E}_{s^*})\Big|=O(n^{-\gamma C}).
\end{align}
We now take $\log$ of both sides of (\ref{eq:ZN-1a}) and divide by $n^{d-1}$ to obtain
\begin{align*}
{-\log Z_{B_{an}}+|B_{an}| \mathcal{P}(d,H,h)\over n^{d-1}}
&={|B_{an}|\over n^{d-1}} \mathcal{P}(d,H,h)+{|B_{an}|\over n^{d-1}}dH(s^*,s^*) \\
&- A(a)H(s^*,s^*)+
{|B_{an}|\over n^{d-1}}h(s^*)+o(1) \notag\\
&+n^{-d+1}\sum_{v\in B^o_{an}}\log\pr_{B_{an}}(\sigma_{v}=s^*|\mathcal{E}_{v,s^*})\\
&+n^{-d+1}\sum_{1\le j\le d}\sum_{k\le C\log n}\sum_{v\in B_{an,j,k,+}}\log\pr_{B_{an}}(\sigma_{v}=s^*|\mathcal{E}_{v,s^*})\\
&+n^{-d+1}\sum_{1\le j\le d}\sum_{k\le C\log n}\sum_{v\in B_{an,j,k,-}}\log\pr_{B_{an}}(\sigma_{v}=s^*|\mathcal{E}_{v,s^*})
\end{align*}
Here we use (\ref{eq:B0a}) and the fact $\pr_{B_{an}}(\sigma_{v}=s^*|\mathcal{E}_{v,s^*})\ge c^*>0$ for all $v$.
Applying (\ref{eq:InternalNodes})
\begin{align*}
\sum_{v\in B^o_{an}}&\log\pr_{B_{an}}(\sigma_{v}=s^*|\mathcal{E}_{v,s^*})=\\
&=|B^o_{an}|\log\pr(\sigma_{0}=s^*|\mathcal{E}_{s^*})+|B^o_{an}|O(n^{-\gamma C})\\
&=|B_{an}|\log\pr(\sigma_{0}=s^*|\mathcal{E}_{s^*})-
\log\pr(\sigma_{0}=s^*|\mathcal{E}_{s^*})\times \\
&\sum_{1\le j\le d}\sum_{k\le C\log n}(|B_{an,j,k,+}|+|B_{an,j,k,+}|)
+o(n^{d-1}),
\end{align*}
where we used (\ref{eq:B0a}) in the last inequality and assume that $C>1/\gamma$.
Using the established identity (\ref{eq:PressureGeneral}) we conclude
\begin{align*}
&{-\log Z_{B_{an}}+|B_{an}| \mathcal{P}(d,H,h)\over n^{d-1}}
=- A(a)H(s^*,s^*)+ \\
&+n^{-d+1}\sum_{1\le j\le d}\sum_{k\le C\log n}\sum_{v\in B_{an,j,k,+}}
\big(\log\pr_{B_{an}}(\sigma_{v}=s^*|\mathcal{E}_{v,s^*})-\log\pr(\sigma_{0}=s^*|\mathcal{E}_{s^*})\big)\\
&+n^{-d+1}\sum_{1\le j\le d}\sum_{k\le C\log n}\sum_{v\in B_{an,j,k,-}}\big(\log\pr_{B_{an}}(\sigma_{v}=s^*|\mathcal{E}_{v,s^*})
-\log\pr(\sigma_{0}=s^*|\mathcal{E}_{s^*})\big) \\
&+o(1)
\end{align*}
Further applying (\ref{eq:B+approx}) and (\ref{eq:B-approx}) and using  $C>1/\gamma$,
we obtain the following expression
\begin{align*}
&- A(a)H(s^*,s^*)+ \\
&+n^{-d+1}\sum_{1\le j\le d}\sum_{k\le C\log n}|B_{an,j,k,+}|
\big(\log\pr_{\Z^d_{j,k,+}}(\sigma_{0}=s^*|\mathcal{E}_{v,s^*})
-\log\pr(\sigma_{0}=s^*|\mathcal{E}_{s^*})\big)\\
&+n^{-d+1}\sum_{1\le j\le d}\sum_{k\le C\log n}|B_{an,j,k,-}|
\big(\log\pr_{\Z^d_{j,-k,-}}(\sigma_{0}=s^*|\mathcal{E}_{v,s^*})
-\log\pr(\sigma_{0}=s^*|\mathcal{E}_{s^*})\big) \\
&+o(1)
\end{align*}
We have
\begin{align*}
|B_{an,j,k,+}|=A_j(a)n^{d-1}+o(n^{d-1}), ~~|B_{an,j,k,-}|=A_j(a)n^{d-1}+o(n^{d-1}).
\end{align*}
The resulting expression is then
\begin{align*}
&- A(a)H(s^*,s^*)+\\
&+\sum_{1\le j\le d}A_j(a)\sum_{k\le C\log n}
\big(\log\pr_{\Z^d_{j,k,+}}(\sigma_{0}=s^*|\mathcal{E}_{v,s^*})
-\log\pr(\sigma_{0}=s^*|\mathcal{E}_{s^*})\big)\\
&+\sum_{1\le j\le d}A_j(a)\sum_{k\le C\log n}
\big(\log\pr_{\Z^d_{j,-k,-}}(\sigma_{0}=s^*|\mathcal{E}_{v,s^*})
-\log\pr(\sigma_{0}=s^*|\mathcal{E}_{s^*})\big)
+o(1)
\end{align*}
Applying the exponential SSM property and the Taylor expansion,  we have for every $k\ge C\log n$
\begin{align*}
&\big|\log\pr_{\Z^d_{j,k,+}}(\sigma_{0}=s^*|\mathcal{E}_{v,s^*})
-\log\pr(\sigma_{0}=s^*|\mathcal{E}_{s^*})\big)| \le \kappa\exp(-\gamma k)\\
&\big|\log\pr_{\Z^d_{j,-k,-}}(\sigma_{0}=s^*|\mathcal{E}_{v,s^*})
-\log\pr(\sigma_{0}=s^*|\mathcal{E}_{s^*})\big)|\le \kappa\exp(-\gamma k).
\end{align*}
This means that we can replace the sums $\sum_{k\le C\log n}$ with infinite sums $\sum_{k\ge 0}$,
with a resulting error $O(\exp(-\gamma C\log n)=O(n^{-\gamma C})$.
Dividing  by $A(a)$ we obtain the result.
\end{proof}

\subsection{Extensions and variations}
In this subsection we establish several variations of the first part of Theorem~\ref{theorem:MainResultGeneral},
namely the representation (\ref{eq:PressureGeneral}) of the free energy in terms of the marginal probability.
First we extend identity (\ref{eq:PressureGeneral}) for the case when we do not necessarily
have SSM, but instead have an upper or lower bound on marginal probability $\pr(\sigma_{0}=s^*|\mathcal{E}_{s^*})$.
In this case we obtain an analogue of (\ref{eq:PressureGeneral}) in the form of inequalities. These
inequalities will be useful for obtaining numerical bounds on free energy for hard-core model in dimensions
$d=3,4$ in Section~\ref{section:IS}.

Given $r>0$ consider an arbitrary spin assignment $(s_u)\in \chi^{|\partial B_r|}$ which is consistent
with event $\mathcal{E}_{s^*}$. Namely, $s_u=s^*$ for all $u\succ 0, u\in \partial B_r$.
Let $p_{\max}(r)~(p_{\min}(r))$ be the maximum (minimum) of $\pr_{B_r}(\sigma_{0}=s^*|\mathcal{E}_{s^*},(a_u))$
when we vary over all such assignments.

\begin{coro}\label{coro:MainResultUpperLower}
For every $(\Z^d,H,h)$ and $r\ge 0$
\begin{align}\label{eq:PressureGeneralUpperLower}
-\log p_{max}(r)&-dH(s^*,s^*)-h(s^*)\notag\\
&\le \mathcal{P}(d,H,h)\notag\\
&\le -\log p_{min}(r)-dH(s^*,s^*)-h(s^*).
\end{align}
\end{coro}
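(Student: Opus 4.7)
My plan is to repeat the factorization/telescoping step from the proof of (\ref{eq:PressureGeneral}) and then replace the SSM-based asymptotic computation of the marginals by a uniform sandwich argument. The identity
\begin{align*}
Z_{B_n}^{-1}&=\exp\bigl(d|B_n|H(s^*,s^*)+o(|B_n|)H(s^*,s^*)+|B_n|h(s^*)\bigr)\prod_{v\in B_n}\pr_{B_n}(\sigma_{v}=s^*|\mathcal{E}_{v,s^*}),
\end{align*}
together with the a priori bound $\pr_{B_n}(\sigma_v=s^*|\mathcal{E}_{v,s^*})\ge c^*$ from Assumption~\ref{assumption:s*}, holds on any finite box without appeal to SSM, and $\mathcal{P}(d,H,h)$ is defined in (\ref{eq:pressure}) as a limit of log-partition functions whose existence is classical. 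Thus the corollary will follow once each factor $\pr_{B_n}(\sigma_v=s^*|\mathcal{E}_{v,s^*})$ is sandwiched between $p_{\min}(r)$ and $p_{\max}(r)$ for an asymptotically full fraction of $v\in B_n$.

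The core step is the claim that for every $v$ in the interior set $B_n^o\triangleq\{v\in B_n:B_r(v)\subset B_n\}$,
\[
p_{\min}(r)\le \pr_{B_n}(\sigma_v=s^*\mid\mathcal{E}_{v,s^*})\le p_{\max}(r).
\]
To prove it, I condition further on arbitrary values $(b_u)$ at the nodes of the inner boundary $\partial B_r(v)$ that lie below $v$ in the lexicographic order; the nodes of $\partial B_r(v)$ above $v$ are already frozen to $s^*$ by $\mathcal{E}_{v,s^*}$. Since $\partial B_r(v)$ is a graph-theoretic separator between the interior of $B_r(v)$ and $B_n\setminus B_r(v)$, the spatial Markov property implies that under this augmented conditioning the law of $\sigma_v$ coincides with the corresponding marginal in the induced subgraph $B_r(v)$, conditioned on $\mathcal{E}_{v,s^*}|_{B_r(v)}$ together with the boundary assignment just prescribed on $\partial B_r(v)$. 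By translation invariance and the inclusion $B_r(v)\subset B_n$, this marginal equals $\pr_{B_r}(\sigma_0=s^*\mid\mathcal{E}_{s^*},(a_u))$ for a boundary assignment $(a_u)$ on $\partial B_r$ consistent with $\mathcal{E}_{s^*}$, and hence lies in $[p_{\min}(r),p_{\max}(r)]$. Averaging over $(b_u)$ preserves the sandwich.

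For the remaining $v\in B_n\setminus B_n^o$ the trivial bound $c^*\le\pr_{B_n}(\sigma_v=s^*|\mathcal{E}_{v,s^*})\le 1$ suffices; since $r$ is fixed, $|B_n\setminus B_n^o|=O(n^{d-1})=o(|B_n|)$ and $\log c^*$ is finite, so after taking logarithms in the factorization and dividing by $|B_n|$ the boundary nodes contribute only $o(1)$, and sending $n\to\infty$ yields
\[
dH(s^*,s^*)+h(s^*)+\log p_{\min}(r)\le -\mathcal{P}(d,H,h)\le dH(s^*,s^*)+h(s^*)+\log p_{\max}(r),
\]
which rearranges to (\ref{eq:PressureGeneralUpperLower}). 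The main obstacle is the spatial Markov bookkeeping in the middle step: the event $\mathcal{E}_{v,s^*}$ freezes spin values both inside $B_r(v)$ and far outside it, so one must carefully argue that after further conditioning on the lower half of $\partial B_r(v)$ what remains is precisely one of the quantities that enter the definition of $p_{\max}(r)$ and $p_{\min}(r)$. Once that identification is cleanly made, the rest is routine asymptotics.
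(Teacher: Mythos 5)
Your proposal is correct and follows essentially the same route as the paper: the paper's proof is literally the telescoping argument of Theorem~\ref{theorem:MainResultGeneral} with the SSM estimate (\ref{eq:MarginalFiniteInfinite}) replaced by the sandwich $p_{\min}(r)\le \pr_{B_n}(\sigma_{v}=s^*|\mathcal{E}_{v,s^*})\le p_{\max}(r)$ for $v$ with $B_r(v)\subset B_n$, which is exactly your core step. Your additional justification of that sandwich via conditioning on the lower half of $\partial B_r(v)$ and the spatial Markov property is a detail the paper leaves implicit, and it is correctly carried out.
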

While the result holds for arbitrary $r$ the quality of the bounds presumably improves with increasing $r$.
We will see in Section~\ref{section:IS} that in some cases $p_{\max}(r)$ and $p_{\min}(r)$ are fairly close for large $r$
even though the model is outside of provably exponential SSM regime.

\begin{proof}
The proof is a minor variation of the proof of (\ref{eq:PressureGeneral}).
Instead of estimate (\ref{eq:MarginalFiniteInfinite}) we use
$p_{\min}(r)\le \pr_{B_n}(\sigma_{v}=s^*|\mathcal{E}_{v,s^*})\le p_{\max}(r)$ for every $n>r$.
\end{proof}

Our second variation is a "chess-pattern" version of Theorem~\ref{theorem:MainResultGeneral}
where in representation (\ref{eq:telescope})
we sequentially remove only vertices with even sum of coordinates.
As it turns out this
version provides substantial gains in computing numerical estimates of free energy both for hard-core
and monomer-dimer models, though we do not have theoretical explanation for this gain.

Let $\Z^d_{\even}=\{v=(v_1,\ldots,v_d)\in \Z^d: \sum_i v_i~\text{is even}\}$. Similarly
define $\Z^d_{\odd}$.
Given a subgraph $\G=(V,E)$ of $\Z^d$, let
$\mathcal{E}_{v,s,\even}$ denote the event $\sigma_u=s, \forall u\succ v, u\in V\cap\Z^d_{\even}$.
The special case $v=0$ is denoted by $\mathcal{E}_{s,\even}$. See Figure~\ref{fig:CavityGeneralChessPattern}
for this chess-pattern
version of the event $\mathcal{E}_{s^*}$.

\begin{figure}
\begin{center}
\includegraphics[scale=.3]{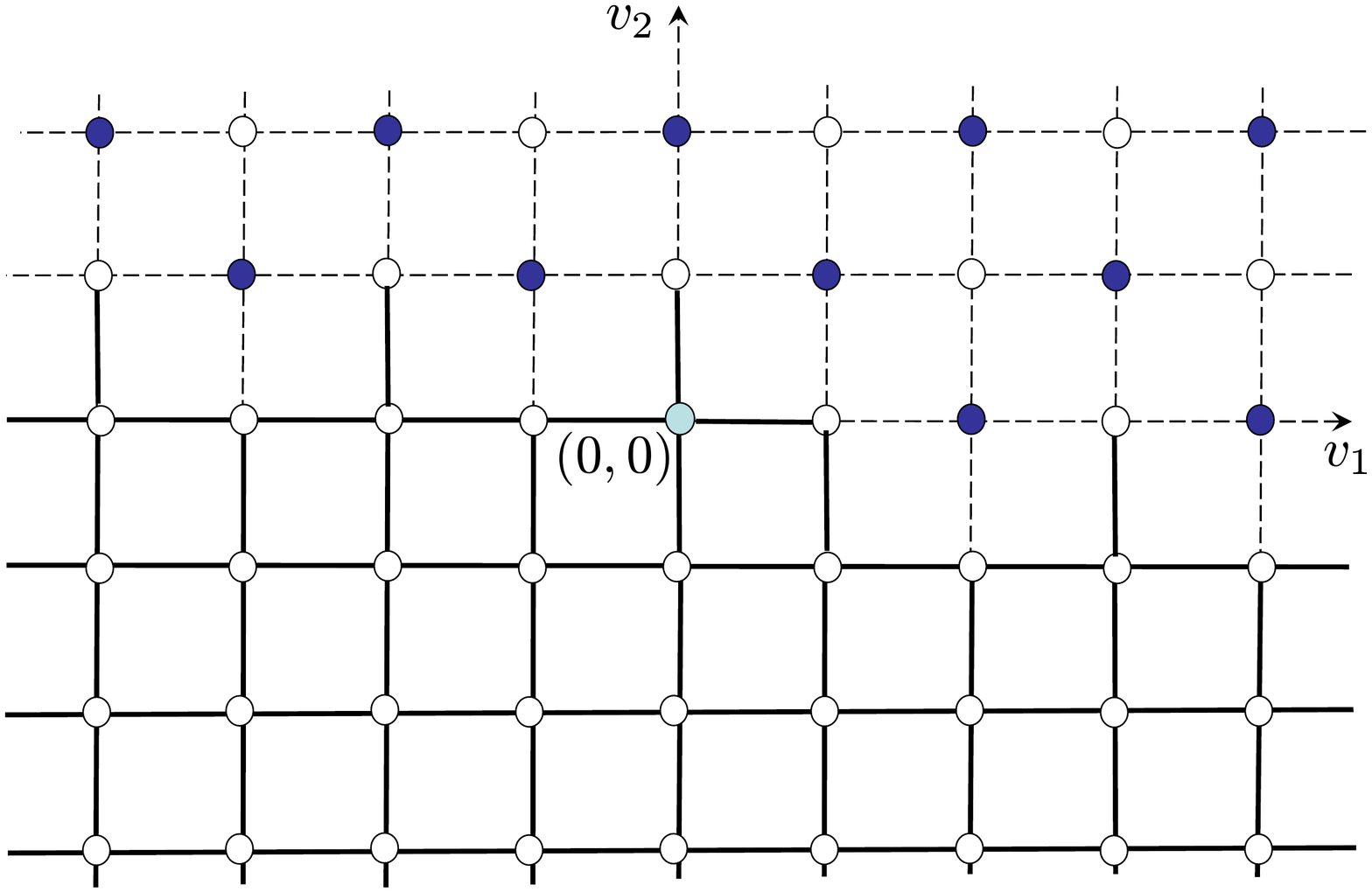}
\caption{Event $\mathcal{E}_{s^*,\even}$ on $\Z^2$. Every dark node is assigned spin $s^*$}
\label{fig:CavityGeneralChessPattern}
\end{center}
\end{figure}

\begin{theorem}\label{theorem:MainResultGeneralChessPattern}
Suppose  $(\Z^d,H,h)$ satisfies the SSM property and the Assumption~\ref{assumption:s*} holds.
Then
\begin{align}\label{eq:PressureGeneralChessPattern}
\mathcal{P}(d,H,h)
&=-{1\over 2}\log \pr(\sigma_{0}=s^*|\mathcal{E}_{s^*,\even}) \notag\\
&+{1\over 2}\log\Big(\sum_{s\in \chi}\exp(-2dH(s,s^*)-h(s))\Big).
\end{align}
\end{theorem}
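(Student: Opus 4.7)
The plan is to mimic the proof of (\ref{eq:PressureGeneral}) while exploiting the bipartite structure of $\Z^d$: every edge connects $\Z^d_{\even}$ to $\Z^d_{\odd}$, so once we condition on all even spins being $s^*$, the odd spins become conditionally independent, each with an explicit single-site law. This splitting is what lets a ``thinned'' telescope (running only over even vertices) still recover $\log Z_{B_n}$, using just the single conditional marginal $\pr(\sigma_0=s^*\mid\mathcal{E}_{s^*,\even})$ for the nontrivial factors.

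Concretely, I start from the same identity used in the original proof,
\[
\pr_{B_n}(\sigma_v=s^*,\ \forall v\in B_n) = Z_{B_n}^{-1}\exp\bigl(-|E(B_n)|H(s^*,s^*)-|B_n|h(s^*)\bigr),
\]
and factor the left-hand side as $P_1\cdot P_2$, where
\[
P_1 = \pr_{B_n}(\sigma_v=s^*,\ \forall v\in B_n\cap\Z^d_{\even}),
\]
\[
P_2 = \pr_{B_n}\bigl(\sigma_v=s^*,\ \forall v\in B_n\cap\Z^d_{\odd}\bigm|\sigma_u=s^*,\ \forall u\in B_n\cap\Z^d_{\even}\bigr).
\]
I then telescope $P_1$ only through even vertices in lexicographic order,
\[
P_1 = \prod_{v\in B_n\cap\Z^d_{\even}}\pr_{B_n}(\sigma_v=s^*\mid\mathcal{E}_{v,s^*,\even}),
\]
while the bipartite structure gives the factorization
\[
P_2 = \prod_{v\in B_n\cap\Z^d_{\odd}}\pr_{B_n}(\sigma_v=s^*\mid\sigma_u=s^*,\ \forall u\in N(v)\cap B_n).
\]
For an odd vertex $v$ whose $2d$ neighbors all lie in $B_n$, a direct single-site Gibbs computation gives
\[
\pr_{B_n}(\sigma_v=s^*\mid\sigma_u=s^*,\ \forall u\in N(v)) = \frac{\exp(-2dH(s^*,s^*)-h(s^*))}{\sum_{s\in\chi}\exp(-2dH(s,s^*)-h(s))}.
\]

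For the $P_1$ factors, SSM (Definition~\ref{definition:SSM}, applied with $Y$ equal to the infinite set of even vertices lexicographically succeeding $v$, which is allowed) together with translation invariance yields, for every $v$ in an interior set $B^o\subset B_n$ with $|B^o|/|B_n|\to 1$,
\[
\bigl|\pr_{B_n}(\sigma_v=s^*\mid\mathcal{E}_{v,s^*,\even}) - \pr(\sigma_0=s^*\mid\mathcal{E}_{s^*,\even})\bigr| \le R(r),
\]
arbitrarily small by choice of $r$. Boundary vertices of either parity --- of which there are $O(n^{d-1})=o(|B_n|)$ --- are controlled exactly as in the proof of (\ref{eq:PressureGeneral}): by the uniform lower bound $c^*>0$ from Assumption~\ref{assumption:s*} and by the trivial upper bound $1$. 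Using $|B_n\cap\Z^d_{\even}|/|B_n|\to 1/2$ and $|B_n\cap\Z^d_{\odd}|/|B_n|\to 1/2$, taking logs in the factored identity $\pr_{B_n}(\text{all}=s^*)=P_1 P_2$, dividing by $|B_n|$, and sending $n\to\infty$ then produces (\ref{eq:PressureGeneralChessPattern}) after elementary algebra.

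The main obstacle is justifying the SSM-based approximation in the chess-pattern setting, because the conditioning event $\mathcal{E}_{v,s^*,\even}$ now pins \emph{infinitely} many spins; this is precisely why Definition~\ref{definition:SSM} admits an infinite set $Y$. One still needs to invoke SSM with $X=\{v\}$ and a sufficiently large $r$ so that the difference between the boundary conditions imposed by $B_n$ and by $\Z^d$ on $\partial_r X$ is negligible, exactly as in the original proof. A secondary, purely combinatorial issue is that boundary odd vertices do not have all $2d$ neighbors in $B_n$, so their single-site factors differ from the clean expression above; since there are only $O(n^{d-1})$ such vertices, their contribution vanishes after normalization by $|B_n|$ and can be absorbed into the error terms.
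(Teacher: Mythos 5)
Your proposal is correct and follows essentially the same route as the paper: the paper marginalizes the odd spins directly to get $\pr_{B_n}(\sigma_v=s^*,\ \forall v\in B_{n,\even})=Z_{B_n}^{-1}\exp(-|B_{n,\even}|h(s^*))\prod_{v\ \text{odd}}\sum_{s}\exp(-\Delta(v)H(s,s^*)-h(s))$ and then telescopes over the even sublattice, which is algebraically identical to your factorization $\pr_{B_n}(\text{all}=s^*)=P_1P_2$ with $P_2$ computed by conditional independence of the odd spins. Your handling of the infinite conditioning set via the $Y$-clause of Definition~\ref{definition:SSM}, the boundary terms, and the density $1/2$ of each sublattice all match the paper's argument.
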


\begin{proof}
Let
$B_{n,\even}=B_n\cap \Z^d_{\even}$. Observe that every node $v\in B_n\setminus B_{n,\even}$
has only neighbors in $B_{n,\even}$. Let $\Delta(v)$ denote the degree of $v$ in $B_n$.
As a result,
\begin{align*}
\pr_{B_n}(\sigma_{v}&=s^*, \forall v\in B_{n,\even})=
Z_{B_n}^{-1}\prod_{v\in B_n\setminus B_{n,\even}}\sum_{s\in \chi}\exp(-\Delta(v)H(s,s^*)-h(s)),
\end{align*}
from which we obtain
\begin{align*}
Z_{B_n}^{-1}=\prod_{v\in B_n\setminus B_{n,\even}}\Big(\sum_{s\in \chi}\exp(-\Delta(v)H(s,s^*)-h(s))\Big)^{-1}
\pr_{B_n}(\sigma_{v}=s^*, \forall v\in B_{n,\even}).
\end{align*}
On the other hand, by telescoping property
\begin{align}\label{eq:telescopeChessPattern}
\pr_{B_n}(\sigma_{v}=s^*, \forall v\in B_{n,\even})&=
\prod_{v\in B_{n,\even}}\pr_{B_n}(\sigma_{v}=s^*|\sigma_{u}=s^*,~\forall~ u\succ v, u\in B_{n,\even})\\
&=\prod_{v\in B_{n,\even}}\pr_{B_n}(\sigma_{v}=s^*|\mathcal{E}_{v,s^*,\even}) \notag
\end{align}
The remainder of the proof is very similar to the proof of Theorem~\ref{theorem:MainResultGeneral}
and details are omitted. Notice that for $|B_n|-o(|B_n|)$ nodes in $B_n$, the degree $\Delta(v)=2d$.
Then in place of (\ref{eq:limsupinf}) we obtain
\begin{align*}
\lim_n{\log Z_{B_n}^{-1}\over |B_n|}&=
-\lim_{n\rightarrow\infty}{|B_n\setminus B_{n,\even}|\over |B_n|}
\log\Big(\sum_{s\in \chi}\exp(-2dH(s,s^*)-h(s))\Big) \\
&+\lim_{n\rightarrow\infty}{| B_{n,\even}|\over |B_n|}\log \pr(\sigma_{0}=s^*|\mathcal{E}_{s^*,\even})\\
&=-{1\over 2}\log\Big(\sum_{s\in \chi}\exp(-2dH(s,s^*)-h(s))\Big)
+{1\over 2}\log \pr(\sigma_{0}=s^*|\mathcal{E}_{s^*,\even})
\end{align*}

\end{proof}

Now let us present a version of Theorem~\ref{theorem:MainResultGeneral} for the model (\ref{eq:GibbsEdges})
where spins are assigned to edges rather than nodes. We need this for application to the monomer-dimer model.
Let $\mathcal{E}_{v,s,\text{edges}}$ denote the event $\sigma_{u,w}=s, \forall u\succ v, w\in N(u)$,
and let $\mathcal{E}_{v,s,\text{edges},\even}$ denote the event $\sigma_{u,w}=s, \forall u\succ v, u\in\Z^d_{\even}, w\in N(u)$.
Let $\mathcal{E}_{s,\text{edges}},\mathcal{E}_{s,\text{edges},\even}$  denote the same events when $v=0$.

\begin{theorem}\label{theorem:MainResultGeneralEdges}
Consider a model $(\Z^d,H,h)$ given by (\ref{eq:GibbsEdges}) with spins assigned to edges.
Suppose  $(\Z^d,H,h)$ satisfies the SSM property and the Assumption~\ref{assumption:s*} holds.
Then
\begin{align}
\mathcal{P}(d,H,h)
&=-\log \pr(\sigma_{(0,v)}=s^*, \forall v\in N(0)|\mathcal{E}_{s^*,\text{edges}})\notag\\
&-d(2d-1)H(s^*,s^*)-dh(s^*)
\label{eq:PressureGeneralEdges}\\
&=-{1\over 2}\log \pr(\sigma_{(0,v)}=s^*, \forall v\in N(0)|\mathcal{E}_{s^*,\text{edges},even})\notag\\
&-d(2d-1)H(s^*,s^*)-dh(s^*).
\label{eq:PressureGeneralEdgesChessPattern}
\end{align}
\end{theorem}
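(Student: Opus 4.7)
The plan is to adapt the proofs of Theorem~\ref{theorem:MainResultGeneral} and Theorem~\ref{theorem:MainResultGeneralChessPattern} to the edge-spin model~\eqref{eq:GibbsEdges}, following the same three-step template: (i) express $Z_{B_n}^{-1}$ as a Boltzmann factor times the probability that every edge of $B_n$ carries spin $s^*$; (ii) telescope that probability along a lexicographic ordering of \emph{nodes}, assigning to each node its incident edges to lex-smaller neighbors; (iii) apply translation invariance and SSM (viewed on the line graph) to replace each factor by its infinite-volume counterpart. For step~(i), a node $v$ of degree $\Delta(v)$ contributes $\binom{\Delta(v)}{2}$ pairs $e\sim e'$ meeting at $v$; all but $o(|B_n|)$ nodes of $B_n$ are interior with $\Delta=2d$, giving $d(2d-1)$ pairs each and a total of $d|B_n|+o(|B_n|)$ edges. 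Hence
\begin{align*}
Z_{B_n}^{-1} = \exp\bigl(d(2d-1)|B_n|H(s^*,s^*) + d|B_n|h(s^*) + o(|B_n|)\bigr)\,\pr_{B_n}\bigl(\sigma_e = s^*\ \forall e \in E(B_n)\bigr).
\end{align*}

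For step~(ii), list the nodes $v_1 \succ v_2 \succ \cdots \succ v_{|B_n|}$ and assign each edge to its lex-larger endpoint; this partitions $E(B_n)$. For an interior $v\in\Z^d$, exactly $d$ of the $2d$ neighbors are lex-smaller (the $v-e_j$ for each coordinate $j$), so $d$ fresh edges are introduced at each interior step. Under $\mathcal{E}_{v,s^*,\text{edges}}$, the other $d$ incident edges $(v,u)$ with $u\succ v$ are already fixed to $s^*$ (each is of the form $(u,v)$ for $u\succ v$), so
\begin{align*}
\pr_{B_n}\bigl(\sigma_{(v,u)} = s^*\ \forall u\in N(v),\,u\prec v \mid \mathcal{E}_{v,s^*,\text{edges}}\bigr) = \pr_{B_n}\bigl(\sigma_{(v,u)} = s^*\ \forall u\in N(v) \mid \mathcal{E}_{v,s^*,\text{edges}}\bigr),
\end{align*}
and the right-hand side is precisely the finite-volume analogue of the marginal in~\eqref{eq:PressureGeneralEdges}. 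Telescoping yields
\begin{align*}
\pr_{B_n}(\sigma_e = s^*\ \forall e) = \prod_{v\in B_n} \pr_{B_n}\bigl(\sigma_{(v,u)} = s^*\ \forall u\in N(v)\mid \mathcal{E}_{v,s^*,\text{edges}}\bigr).
\end{align*}
Step~(iii) is then essentially verbatim from the proof of Theorem~\ref{theorem:MainResultGeneral}: for $v$ at distance $\ge r(\epsilon)$ from $\partial B_n$ apply SSM plus translation invariance to replace each factor by $\pr(\sigma_{(0,u)}=s^*\ \forall u\in N(0)\mid \mathcal{E}_{s^*,\text{edges}})$ up to $\epsilon$; for the $o(|B_n|)$ boundary nodes, invoke the uniform lower bound $\ge c^*$ from Assumption~\ref{assumption:s*}, as in~\eqref{eq:lowerNotinBo}. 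Taking logs, dividing by $|B_n|$, and letting $n\to\infty$ then $\epsilon\to 0$ delivers~\eqref{eq:PressureGeneralEdges}.

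The chess-pattern identity~\eqref{eq:PressureGeneralEdgesChessPattern} follows by iterating only over $v\in B_{n,\even}$. Since every edge of $\Z^d$ joins an even node to an odd node, the map ``edge~$\mapsto$~even endpoint'' is a bijection from $E(B_n)$ onto $B_{n,\even}$ up to boundary effects; thus at each step $v\in B_{n,\even}$ all $2d$ incident edges are fresh, conditioned on $\mathcal{E}_{v,s^*,\text{edges},\even}$. Because $|B_{n,\even}|/|B_n|\to\tfrac{1}{2}$, a factor of $\tfrac{1}{2}$ appears in front of the log-marginal while the Boltzmann prefactor is unchanged. The only nontrivial checkpoint, and what I expect to be the main obstacle to write carefully, is the bookkeeping in step~(ii): verifying that the assignment of edges to lex-larger endpoints is indeed a partition of $E(B_n)$, and that the $d$ already-fixed edges at $v$ under $\mathcal{E}_{v,s^*,\text{edges}}$ combine with the $d$ fresh edges to reconstitute the full $2d$-edge marginal at the origin appearing in the theorem statement. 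Once this accounting is in place, the SSM substitution and the $o(|B_n|)$ estimates proceed by the template of Theorem~\ref{theorem:MainResultGeneral}.
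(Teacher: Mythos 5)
Your proposal is correct and follows essentially the same route as the paper's own proof: the same Boltzmann-factor count ($d(2d-1)|B_n|$ incident-edge pairs, $d|B_n|$ edges), the same telescoping of $\pr_{B_n}(\sigma_e=s^*\ \forall e)$ over nodes in lexicographic order (and over even nodes for the chess-pattern identity, using that every edge has exactly one even endpoint), and the same SSM/translation-invariance/boundary-term argument borrowed from Theorem~\ref{theorem:MainResultGeneral}. Your explicit bookkeeping of the partition of $E(B_n)$ by lex-larger endpoints, and of why the $d$ fresh edges combine with the $d$ already-conditioned edges to give the full $2d$-edge marginal, is exactly the content the paper leaves implicit in its phrase ``by telescoping property.''
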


\remark Contrast this result with Theorem~\ref{theorem:MainResultGeneralChessPattern}.
There the extra term $\log(\sum_{s\in\chi}\cdot)$ appears since every odd node surrounded by even nodes with
preset spin value $s^*$ still has $\chi$ choices for the spin selection. For the edge version this is not the case:
all edges are preselected to take spin values $s^*$.

\begin{proof}
We first prove (\ref{eq:PressureGeneralEdges}).
Let $E(B_n)$ denote the edge set of $B_n$. Note that $E(B_n)=d|B_n|+o(|B_n|)$ and the the number
of edges with $2d-2$ incident edges is also $d|B_n|+o(|B_n|)$. The number of pairs of incident
edges is then $d(2d-1)|B_n|+o(|B_n|)$.
We have
\begin{align*}
\pr_{B_n}(\sigma_{e}&=s^*, \forall e\in E(B_n))\\
&=Z_{B_n}^{-1}\exp\big(-d(2d-1)|B_n|H(s^*,s^*)-o(|B_n|)H(s^*,s^*)-d|B_n|h(s^*)\big).
\end{align*}
On the other hand, by telescoping property
\begin{align*}
\pr_{B_n}&(\sigma_{e}=s^*, \forall e\in E(B_{n}))\\
&=\prod_{v\in B_{n}}\pr_{B_n}(\sigma_{(v,u)}=s^*, \forall u\in N(v)|
\sigma_{(u,w)}=s^*,~\forall~ u\succ v, w\in N(u),~ u,w\in B_{n}) \\
&=\prod_{v\in B_{n}}\pr_{B_n}(\sigma_{(v,u)}=s^*, \forall u\in N(v)|\mathcal{E}_{v,s^*,\text{edges}}).
\end{align*}
The remainder of the proof of
(\ref{eq:PressureGeneralEdges}) is similar to the one of Theorem~\ref{theorem:MainResultGeneral} and is omitted.

Turning to (\ref{eq:PressureGeneralEdgesChessPattern}), observe that every edge in $\Z^d$ has exactly one end point
in $\Z^d_{\even}$. Then
we have again by the telescoping property
\begin{align*}
\pr_{B_n}&(\sigma_{e}=s^*, \forall e\in E(B_{n}))\\
&=\prod_{v\in B_{n,\even}}\pr_{B_n}(\sigma_{(v,u)}=s^*, \forall u\in N(v)|
\sigma_{(u,w)}=s^*,~\forall~ u\succ v, w\in N(u),~ u\in B_{n,\even},w\in B_{n}) \\
&=\prod_{v\in B_{n,\even}}\pr_{B_n}(\sigma_{(v,u)}=s^*, \forall u\in N(v)|\mathcal{E}_{v,s^*,\text{edges},\even}).
\end{align*}
The remainder of the proof of
(\ref{eq:PressureGeneralEdges}) is similar to the one of Theorem~\ref{theorem:MainResultGeneral}.
The fact $|B_{n,even}|/|B_n|\rightarrow 1/2$ as $n\rightarrow\infty$ leads to a factor $1/2$ in
(\ref{eq:PressureGeneralEdgesChessPattern}).
\end{proof}

\subsection{Applications and numerical complexity}\label{subsection:complexity}
Theorem~\ref{theorem:MainResultGeneral} reduces the problem of computing $\mathcal{P}$ and $s\mathcal{P}$
to the problem of computing conditional marginal probabilities $\pr(\sigma_0=s^*|\mathcal{E}_{s^*})$. This is certainly
not the only way to represent free energy and surface pressure in terms of marginal probabilities.
For example, consider a modified system $(\G,\beta H,\beta h)$ on a finite graph $\G$, and observe that

\begin{align*}
{d\log Z_{\G}\over d\beta}=-\sum_{v\in V}\E[h(\sigma_v)]-\sum_{(v,u)\in E}\E[H(\sigma_v,\sigma_u)],
\end{align*}
both expectations are taken wrt the Gibbs measure $\pr$. Thus knowing the marginal probabilities $\pr(\sigma_v)$
and joint probabilities $\pr(\sigma_v,\sigma_u), (v,u)\in E$
lets us recover $\log Z_{\G}$ approximately in principle.
Unfortunately, this means we have to integrate the answers over $\beta\in [0,1]$,
which in practice has to be approximated by summation. In order then to guarantee the target level of accuracy, one
would have to control the derivatives of marginal probabilities wrt $\beta$.
Additionally,
one would have to compute marginal probabilities for a whole range of $\beta$, over which the integration takes place.
The advantage of the representation (\ref{eq:PressureGeneral}) and (\ref{eq:SurfacePressureGeneral}) is that it allows
us computing free energy and surface pressure by computing \emph{only one}
marginal probability $\pr(\sigma_0=s^*|\mathcal{E}_{s^*})$.
In this paper we will compute these marginal probabilities approximately using recent deterministic algorithms for computing
such marginal probabilities in general graphs, where appropriately defined computation tree (see following chapters) satisfies
exponential SSM. As we will see below, when we have such property, our method provides an additive $\epsilon$
approximation of free energy in time $(1/\epsilon)^{O(1)}$, where the constant $O(1)$
may depend on the model parameters and $d$. Let us now compare this performance with the performance of the
transfer matrix method. While we are not aware of any systematic numerical complexity analysis of the transfer
matrix method, it can be deduced from the following considerations. The transfer matrix method is based on first computing
the partition function on a strip $[-n,n]^{d-1}\times \Z$. The latter is done by constructing certain
$|\chi|^{(2n+1)^{d-1}}$ by $|\chi|^{(2n+1)^{d-1}}$ transfer matrix.
For the hard-core case with $\lambda=1$ (see Section~\ref{section:IS})
the matrix is $0-1$ with $1$ corresponding to allowed pair of neighboring configurations and $0$ corresponding
to pairs of configurations which are not allowed. The spectral radius of the transfer matrix is then used to deduce
the growth rate of the partition function, namely the free energy. Constructing such matrix takes time $\exp(O(n^{d-1}))$.
Since the partition function on
$[-n,n]^{d-1}\times\Z$ converges to the one of $\Z^d$ at the rate $O(1/n)$~\cite{SimonLatticeGases},
then in order to achieve an additive error
$\epsilon>0$, one needs $\exp(O((1/\epsilon)^{d-1}))$ computation effort.

Suppose one then wishes to use this method to approximate the surface pressure with an additive error $\epsilon$.
What is the required numerical effort?
We are not aware of applications of the transfer matrix method for computing the surface pressure. Thus
the reasonable alternative approach is to
1) select a rectangle $an$ for some large value $n$, 2) Compute
the partition function $Z_{an}$ in this rectangle using perhaps the brute force method and 3) Compute
the approximation $\hat P$ of the free energy $\mathcal{P}$ using perhaps the transfer matrix method.
The approximate surface pressure is then obtained from the fact that the
convergence rate in the limit (\ref{eq:surfacepressure}) is $O(1/n)$~\cite{SimonLatticeGases}.
Let us show that this approach
requires $\exp\big(O((1/\epsilon)^{2d-2})\big)$ numerical effort in order to obtain $\epsilon$ additive
approximation, regardless of how quickly one is able to compute $Z_{an}$.  Indeed,
observe that this approach leads to an
error $n^d|\hat P-\mathcal{P}|/n^{d-1}=n|\hat P-\mathcal{P}|$.
 Thus for the target $\epsilon$ additive error,
we need to set $n\ge 1/\epsilon$. But this further requires that
$(1/\epsilon)|\hat P-\mathcal{P}|=O(\epsilon)$, namely $|\hat P-\mathcal{P}|=O(\epsilon^2)$.
Thus we need to achieve $\epsilon^2$ additive error accuracy in estimating the free energy.
This requires $\exp\big(O((1/\epsilon)^{2d-2})\big)$ per our earlier calculations,
and the assertion is established.
This is a stark contrast with complexity $(1/\epsilon)^{O(1)}$ of the method proposed in this paper
both for free energy and surface pressure for the cases of hard-core and monomer-dimer models.
However, one should note that our method takes explicit advantage of the exponential SSM, while the
transform matrix method does not rely on this assumption.

\section{Hard-core (independent set)  model}\label{section:IS}
The hard-core lattice gas model, commonly known as independent set model in combinatorics, is given by
$\chi=\{0,1\}, H(0,0)=H(0,1)=H(1,0)=0, H(1,1)=\infty, h(0)=0,h(1)=\beta$ for some parameter $\beta$.
Case of interest is  $\beta\le 0$ as it corresponds to Gibbs measure putting larger weight on larger cardinality
independent set. Choosing $s^*=0$ we obtain that Assumption~\ref{assumption:s*} holds.

It is common to set $\lambda=\exp(-\beta)>0$ and let $\lambda$ be the parameter of the
hard-core model. This parameter is usually called \emph{activity}. Note that in terms of $\lambda$, for every
finite graph $\G$
\begin{align}\label{eq:IndSetPartitionFunction}
Z_{\G}=\sum\lambda^{|\{v\in V:\sigma_v=1\}|}
\end{align}
where the sum is over all spin configurations $(\sigma_v)\in \{0,1\}^V$ such that
$\sigma_v\sigma_u=0$ for all $(v,u)\in E$.
Equivalently, a subset of nodes  $I\subset V(\G)$ is called an independent (also sometimes called a stable set)
if for no edge $(u,v)$ we have both $u$ and $v$ belong to $I$. Then we may rewrite (\ref{eq:IndSetPartitionFunction}) as
\begin{align}
Z_{\G}=\sum_{I}\lambda^{|I|} \label{eq:PartitionFunctionIS}
\end{align}
where the summation is over all independent sets of $\G$. The summation in (\ref{eq:PartitionFunctionIS})
is sometimes called an independent set polynomial in the combinatorics literature.
From now on we let $\mbI$ denote the random independent set selected according to a Gibbs measure
(multiply defined if there are many Gibbs measures).
In the case of finite graph $\G$, for every independent set $I$ we have
\begin{align*}
\pr_{\G}(\mbI=I)=Z_{\G}^{-1}\lambda^{|I|}.
\end{align*}
The special case $\lambda=1$ corresponds to a uniform distribution on the set of all independent sets in $\G$.
We denote the free energy and surface pressure on $\Z^d$ by $\mathcal{P}(d,\lambda)$ and $s\mathcal{P}(d,\lambda,a)$,
respectively. In the special case $\lambda=1$, the free energy is also the entropy of the Gibbs distribution, since it is
uniform.
Conditioning on spins taking value $s^*=0$ simplifies significantly in the context of hard-core model as the following
proposition shows, a simple proof of which we include for completeness.

\begin{prop}\label{prop:hardcorereduction}
If a hard-core model on a graph $\G=(V,E)$ satisfies SSM for some $\lambda$, then so does
any subgraph of $\G$. The same assertion applies to exponential SSM.
Moreover, for every $W_1,W_2\subset V(\G)$ the following identity holds
with respect to the unique Gibbs measure.
\begin{align}\label{eq:reductionIS}
\pr_{\G}(v\in \mbI| W_1\cap\mbI=\emptyset, W_2\subset \mbI)&=\pr_{\hat\G}(v\in\mbI),
\end{align}
for every $v\in V\setminus (W_1\cup B_1(W_2))$,
where $\hat\G$ is the subgraph induced by nodes in $V\setminus (W_1\cup B_1(W_2))$.
\end{prop}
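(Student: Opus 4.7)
My plan is to first prove the identity (\ref{eq:reductionIS}) by an explicit partition-function calculation, then lift it to a joint version on the whole configuration space, and finally use the joint version to transfer SSM from $\G$ to its induced subgraphs.

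For (\ref{eq:reductionIS}), the crucial observation is that an independent set $I \subset V$ of $\G$ satisfies both $W_1 \cap I = \emptyset$ and $W_2 \subset I$ if and only if $I = W_2 \sqcup I'$ for some independent set $I'$ of $\hat\G$, in which case $\lambda^{|I|} = \lambda^{|W_2|}\lambda^{|I'|}$. Substituting this bijection into the numerator and denominator of the conditional probability, the common factor $Z_{\G}^{-1}\lambda^{|W_2|}$ cancels, leaving $\sum_{I'\ni v}\lambda^{|I'|}/Z_{\hat\G} = \pr_{\hat\G}(v\in\mbI)$. The very same bijection yields, at no extra cost, the stronger joint identity
\begin{equation*}
\pr_{\G}\bigl(A \,\big|\, W_1\cap\mbI = \emptyset,\; W_2\subset\mbI\bigr) \;=\; \pr_{\hat\G}(A)
\end{equation*}
for every event $A$ measurable with respect to the spins in $V\setminus (W_1\cup B_1(W_2))$.

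For the SSM claim, let $\G' = \G[V']$ be an induced subgraph of $\G$. Applying the joint identity with $W_1 = V\setminus V'$ and $W_2 = \emptyset$ shows that $\pr_{\G'}$ is exactly $\pr_{\G}(\,\cdot\,\mid \sigma_{V\setminus V'}=0)$ restricted to $V'$. Fix $X, Y\subset V'$, $r\geq 0$, and boundary spins $s^3,s^4$ on $\partial_r^{\G'}X$. Using the identity, both sides of the SSM inequality for $\G'$ become $\pr_{\G}$-conditional probabilities with an enlarged $Y$-set $\tilde Y = Y \cup (V\setminus V') \cup \partial_r^{\G'}X$ carrying the values $s^2$, $0$, and $s^3$ or $s^4$ respectively. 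I would then insert an intermediate $\G$-boundary at radius $r'$: by SSM on $\G$ applied with this $\tilde Y$, each side lies within $R(r')$ of a reference probability $\pr_{\G}(\sigma_X=s^1\mid \sigma_{\tilde Y}, \sigma_{\partial_{r'}^{\G} X}=\tau^*)$ for any configuration $\tau^*$. If $r'$ is chosen so that $\partial_{r'}^{\G}X$ separates $X$ from $\partial_r^{\G'}X$ in $\G$, the spatial Markov property forces the two reference probabilities (one with $s^3$, one with $s^4$) to coincide, and the SSM discrepancy on $\G'$ is bounded by $2R(r')$; the exponential case is inherited identically.

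The main obstacle is verifying that $r'$ can be taken to diverge with $r$, since a priori the $\G$-distance from $X$ to $\partial_r^{\G'}X$ is only bounded above by $r$ and could be much smaller if paths through $V\setminus V'$ shorten distances. This is resolved by a hard-core-specific observation: conditioning $\sigma_{V\setminus V'}=0$ renders those vertices inert—a spin-$0$ vertex in the hard-core model imposes no constraint on its neighbors, so correlations cannot be transmitted through $V\setminus V'$. Choosing the reference $\tau^* = 0$ and reapplying the identity, $\partial_{r'}^{\G}X$ becomes inert as well, and the effective separation distance inside the conditional measure equals the $\G'$-distance $r$. One can therefore take $r' = \Omega(r)$, yielding SSM (and exponential SSM, with unchanged decay rate up to constants) on $\G'$.
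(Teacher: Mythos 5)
Your reduction identity and the conversion of SSM for $\G'$ into a statement about $\pr_{\G}$ conditioned on $\sigma_{V\setminus V'}=0$ are in the spirit of the paper's argument, and you correctly isolate the real difficulty: vertices of $\partial_r^{\G'}X$ may lie at bounded $\G$-distance from $X$, so no inserted sphere $\partial_{r'}^{\G}X$ with $r'\to\infty$ separates $X$ from the varied spins. But your resolution of that difficulty is not a proof. Declaring the $0$-conditioned vertices ``inert'' and concluding that ``the effective separation distance inside the conditional measure equals the $\G'$-distance $r$'' is precisely the assertion that has to be established: it is decay of correlations measured in the intrinsic metric of the reduced graph, i.e.\ SSM for $\G'$ itself. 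The hypothesis you may use, SSM for $\G$, quantifies decay only in the $\G$-metric, and taking the reference $\tau^*=0$ does not close the argument: after the sphere is deleted via the identity, the two reference probabilities are still conditioned on $s^3$ versus $s^4$ at the varied vertices sitting inside $B_{r'}^{\G}(X)$, and bounding their difference is again an influence-at-small-$\G$-distance/large-$\G'$-distance question of exactly the type you set out to control. Hence the step ``one can therefore take $r'=\Omega(r)$'' does not follow from anything preceding it; the argument is circular at its crucial point. The paper avoids this trap by never comparing assignments on the intrinsic boundary $\partial_r^{\G_1}X$ at all: it runs the cancellation inside an ambient ball $B_r(v)$ with an arbitrary boundary assignment $\mathcal{S}$ on $\partial B_r(v)$, obtains the chain $\pr_{\G_1}(\cdot\mid W_1,W_2,\mathcal{S})=\pr_{\G}(\cdot\mid W_1\cup(V\setminus V_1),W_2,\mathcal{S})=\pr_{\hat\G_1}(\cdot\mid\mathcal{S})$, and then invokes SSM of $\G$ with the removed vertices absorbed into the arbitrary (possibly infinite) set $Y$ of Definition~\ref{definition:SSM}, so that only ambient spheres ever enter, and takes $r\to\infty$.

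A second, smaller defect: your ``explicit partition-function calculation,'' in which $Z_{\G}^{-1}\lambda^{|W_2|}$ cancels, only makes sense for finite $\G$ with finite, consistent $W_1,W_2$ (in particular $W_2$ independent and disjoint from $W_1$). The proposition, however, is stated for the unique infinite-volume Gibbs measure and is applied with infinite conditioning sets, e.g.\ $W_1=\{u\succ 0\}$ coming from $\mathcal{E}_{s^*}$, where $Z_{\G}$ does not exist and the conditioning event has probability zero. Your bijection $I\mapsto I\setminus W_2$ is the right combinatorial core, but to give (\ref{eq:reductionIS}) meaning and a proof in the setting where it is used you need the finite-ball formulation with boundary conditions and a limit controlled by SSM, which is exactly what the paper's proof supplies and your sketch omits.
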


\begin{proof}
Fix any positive integer $r$ and consider any spin assignment
$(s_u), u\in \partial B_r(v)\setminus (W_1\cup B_1(W_2))$. Extend this to a spin assignment to entire
$\partial B_r(v)$ by setting $s_u=1$ (that is $u\in \mbI$), for $u\in \partial B_r(v)\cap W_2$ and $s_u=0$ (that is $u\notin\mbI$)
for $u\in \partial B_r(v)\cap \Big(W_1\cup (B_1(W_2)\setminus W_2)\Big)$.
Call this spin assignment $\mathcal{S}$.
Applying spatial Markovian property we have
\begin{align*}
\pr_{\G}(\sigma_v=0&|\sigma_u=0, u\in W_1,\sigma_u=1,u\in W_2, \mathcal{S}) \\
&={\pr_{\G}(\sigma_v=0,\sigma_u=0, u\in W_1,\sigma_u=1,u\in W_2,| \mathcal{S})
\over \pr_{\G}(\sigma_u=0, u\in W_1,\sigma_u=1,u\in W_2|\mathcal{S})} \\
&={\sum_{I\in \mathcal{I}_1}\lambda^{|I|}\over \sum_{I\in \mathcal{I}_2}\lambda^{|I|}},
\end{align*}
where $\mathcal{I}_1$ is the set of independent sets in $B_r(v)$ such that
$v\notin I, I\cap W_1=\emptyset, W_2\subset I$ and $u\in I\cap \partial B_r(v)$ iff $s_u=1$ (according to $\mathcal{S}$).
The set $\mathcal{I}_2$ is defined similarly, except the condition $v\notin I$ is dropped.
Then the ratio is equal to
\begin{align*}
{\sum_{I\in \mathcal{I}_1}\lambda^{|I|-|W_2|}\over \sum_{I\in \mathcal{I}_2}\lambda^{|I|-|W_2|}}=
{\sum_{I\in \mathcal{I}_3}\lambda^{|I|}\over \sum_{I\in \mathcal{I}_4}\lambda^{|I|}}
\end{align*}
where $\mathcal{I}_4$ is the set of independent subsets of $B_r(0)\setminus (W_1\cup B_1(W_2))$
such that $u\in I\cap \partial B_r(v)$ iff $s_u=1$, and $\mathcal{I}_3$ is defined similarly, except
in addition $v\notin I$ for every $I\in \mathcal{I}_3$. We recognize this ratio as
$\pr_{\hat \G}(\sigma_v=0|\mathcal{S})$. We conclude
\begin{align*}
\pr_{\G}(\sigma_v=0|\sigma_u=0, u\in W_1,\sigma_u=1,u\in W_2, \mathcal{S})=\pr_{\hat \G}(\sigma_v=0|\mathcal{S}).
\end{align*}
Now consider any induced subgraph $\G_1$ of $\G$ and any two sets $W_1,W_2$ in $\G_1$.
Applying a similar argument we obtain
\begin{align*}
\pr_{\G_1}&(\sigma_v=0|\sigma_u=0, u\in W_1,\sigma_u=1,u\in W_2, \mathcal{S}) \\
&=\pr_{\G}(\sigma_v=0|\sigma_u=0, u\in W_1\cup (V\setminus V_1),\sigma_u=1,u\in W_2, \mathcal{S}) \\
&=\pr_{\hat \G_1}(\sigma_v=0|\mathcal{S})
\end{align*}
where $\hat\G_1$ is induced by nodes $V_1\setminus (W_1\cup B_1(W_2))$. By SSM property, the second quantity
has a limit as $r\rightarrow\infty$ which is independent from assignment $\mathcal{S}$ on the boundary
$\partial B_r(v)$. Therefore the same applies to the first and third quantities. The first conclusion implies that
$\G_1$ satisfies SSM. The second conclusion gives (\ref{eq:reductionIS}) when applied to $\G_1=\G$.
\end{proof}

In light of Propoposition~\ref{prop:hardcorereduction}
we obtain the following simplification of
Theorems~\ref{theorem:MainResultGeneral}  and  \ref{theorem:MainResultGeneralChessPattern} in the hard-core case. Let
\begin{align*}
\Z^d_{\prec 0,\even}=\Z^d_{\prec 0}\cup \{u\in \Z^d_{\odd}: 0\prec u\},
\end{align*}
see Figure~\ref{fig:CavityGeneralChessPattern}.
\begin{coro}\label{corollary:IS}
Suppose the hard-core model on $\Z^d$ satisfies SSM for a given $\lambda$. Then
\begin{align}
\mathcal{P}(d,\lambda)&=-\log \pr_{\Z^d_{\prec 0}}(0\notin \mbI) \label{eq:PressureIS1}\\
&=-{1\over 2}\log \pr_{\Z^d_{\prec 0, \even}}(0\notin \mbI)+{1\over 2}\log (1+\lambda). \label{eq:PressureIS1chesspattern}
\end{align}
\end{coro}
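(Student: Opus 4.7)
The plan is to derive both identities in Corollary~\ref{corollary:IS} as direct specializations of Theorem~\ref{theorem:MainResultGeneral} and Theorem~\ref{theorem:MainResultGeneralChessPattern} respectively, then use Proposition~\ref{prop:hardcorereduction} to convert the conditional marginals into unconditional marginals on the appropriate induced subgraph. Before starting, note that the hard-core parametrization satisfies Assumption~\ref{assumption:s*} with $s^*=0$, since $H(0,0)=H(1,0)=0<\infty$.

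For the first identity, I would plug the hard-core parameters into (\ref{eq:PressureGeneral}). Since $H(s^*,s^*)=H(0,0)=0$ and $h(s^*)=h(0)=0$, the formula collapses to
\begin{align*}
\mathcal{P}(d,\lambda) = -\log\pr(\sigma_0 = 0 \mid \mathcal{E}_0).
\end{align*}
The conditioning event $\mathcal{E}_0$ asserts $\sigma_u=0$ (equivalently $u\notin\mbI$) for every $u\succ 0$. I would then apply Proposition~\ref{prop:hardcorereduction} with $W_1=\{u\in\Z^d: u\succ 0\}$ and $W_2=\emptyset$: by (\ref{eq:reductionIS}) the conditional marginal on $\Z^d$ equals the unconditional marginal on the induced subgraph with vertex set $\Z^d\setminus W_1=\Z^d_{\prec 0}$, giving (\ref{eq:PressureIS1}).

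For the second identity, I would specialize (\ref{eq:PressureGeneralChessPattern}) in the same manner. The normalizing sum becomes
\begin{align*}
\sum_{s\in\{0,1\}}\exp(-2dH(s,0)-h(s)) = e^0 + e^{-\beta} = 1+\lambda,
\end{align*}
so Theorem~\ref{theorem:MainResultGeneralChessPattern} yields
\begin{align*}
\mathcal{P}(d,\lambda) = -\tfrac{1}{2}\log\pr(\sigma_0=0\mid \mathcal{E}_{0,\even}) + \tfrac{1}{2}\log(1+\lambda).
\end{align*}
The event $\mathcal{E}_{0,\even}$ fixes $\sigma_u=0$ for all $u\in\Z^d_{\even}$ with $u\succ 0$. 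Applying Proposition~\ref{prop:hardcorereduction} once more with $W_1=\{u\in\Z^d_{\even}: u\succ 0\}$ and $W_2=\emptyset$ converts this to the marginal on the graph induced by $\Z^d\setminus W_1$, which by the definition of $\Z^d_{\prec 0,\even}$ stated just before the corollary is precisely $\Z^d_{\prec 0,\even}$. This gives (\ref{eq:PressureIS1chesspattern}).

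The only technical point requiring care is confirming that Proposition~\ref{prop:hardcorereduction} applies: it requires SSM on $\Z^d$ for the given $\lambda$, which is exactly the hypothesis of the corollary, and its conclusion uniformly handles infinite $W_1$ since the proof of Proposition~\ref{prop:hardcorereduction} works on arbitrary induced subgraphs. I do not expect any real obstacle here, as both formulas follow by matching parameters in already-established representations and then invoking Proposition~\ref{prop:hardcorereduction} to absorb the conditioning into the geometry of the underlying subgraph.
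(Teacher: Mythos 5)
Your proposal is correct and follows exactly the paper's intended route: the paper derives the corollary by specializing Theorems~\ref{theorem:MainResultGeneral} and~\ref{theorem:MainResultGeneralChessPattern} to the hard-core parameters ($s^*=0$, $H(0,0)=h(0)=0$, and $\sum_s\exp(-2dH(s,0)-h(s))=1+\lambda$) and then invoking Proposition~\ref{prop:hardcorereduction} to turn the conditional marginals into unconditional ones on $\Z^d_{\prec 0}$ and $\Z^d_{\prec 0,\even}$. Nothing further is needed.
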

Thus we now focus on developing an algorithm for numerically estimating marginal probabilities appearing
in (\ref{eq:PressureIS1}) and (\ref{eq:PressureIS1chesspattern}).

\subsection{Recursion, sequential cavity algorithm and correlation decay}\label{subsection:AlgorithmIS}
Let us now introduce a recursion satisfied by the hard-core model.
This identity in a different form using a self-avoiding tree construction was
established recently by  Weitz~\cite{weitzCounting}. We repeat here some of the developments
in~\cite{weitzCounting}, with some minor modifications, which
are indicated as necessary.

\begin{theorem}\label{theorem:RecursionIndSet}
Given a finite graph $\G=(V,E)$ and  $v\in V$, let $N(v)=\{v_1,\ldots,v_k\}$. Then
\begin{align}\label{eq:RecursionIndSet}
\pr_{\G}(v\notin \mbI)={1\over 1+\lambda\prod_{1\le i\le k}\pr_{\G_{i-1}}(v_i\notin \mbI)}
\end{align}
where $\G_i$ is the graph induced by $V\setminus \{v,v_1,\ldots,v_{i}\}$, $\G_0$ is induced
by $V\setminus \{v\}$ and $\prod_{1\le i\le k}=1$ when $k=0$.
\end{theorem}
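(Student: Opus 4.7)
The plan is to rewrite $\pr_{\G}(v\notin\mbI)$ as $1/(1+R)$, where $R=\pr_{\G}(v\in\mbI)/\pr_{\G}(v\notin\mbI)$, and to show that $R$ equals the claimed product.

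First, I would use the partition function representation (\ref{eq:PartitionFunctionIS}) to write both marginals as ratios of partition functions. Every independent set $I$ of $\G$ containing $v$ is obtained from a unique independent set $I'$ of the graph induced by $V\setminus(\{v\}\cup N(v))$ by adjoining $v$, and has $|I|=|I'|+1$. Hence
\begin{align*}
\pr_{\G}(v\in\mbI)=\lambda\,Z_{\G_k}/Z_{\G},\qquad \pr_{\G}(v\notin\mbI)=Z_{\G_0}/Z_{\G},
\end{align*}
where I have used that removing $v$ and all its $k$ neighbors from $\G$ yields precisely $\G_k$, while removing only $v$ yields $\G_0$. Taking the ratio gives $R=\lambda\,Z_{\G_k}/Z_{\G_0}$.

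Second, I would telescope this ratio along the sequence $\G_0\supset\G_1\supset\cdots\supset\G_k$:
\begin{align*}
{Z_{\G_k}\over Z_{\G_0}}=\prod_{i=1}^{k}{Z_{\G_i}\over Z_{\G_{i-1}}}.
\end{align*}
Each factor has a clean probabilistic interpretation: since $\G_i$ is obtained from $\G_{i-1}$ by deleting the single vertex $v_i$, the same partition-function argument applied to $v_i$ in $\G_{i-1}$ (now without the ``$v_i\in\mbI$'' contribution, which would require removing neighbors, but we only need the ``not in'' part) yields
\begin{align*}
\pr_{\G_{i-1}}(v_i\notin\mbI)={Z_{\G_{i-1}\setminus\{v_i\}}\over Z_{\G_{i-1}}}={Z_{\G_i}\over Z_{\G_{i-1}}}.
\end{align*}
Substituting this into the telescoped product and then into $R$ gives $R=\lambda\prod_{i=1}^{k}\pr_{\G_{i-1}}(v_i\notin\mbI)$, whence the recursion (\ref{eq:RecursionIndSet}) follows.

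The argument is essentially a bookkeeping telescoping of ratios of partition functions, so there is no real obstacle; the only point worth being careful about is the boundary case $k=0$ (the vertex $v$ is isolated in $\G$), where the product is empty and the recursion reduces to $\pr_{\G}(v\notin\mbI)=1/(1+\lambda)$, which matches the two-element Gibbs measure on $\{0,1\}$ with weights $1$ and $\lambda$. Also, since $v_1,\ldots,v_k$ are distinct neighbors of $v$, the sequence of induced subgraphs $\G_i$ is well-defined and strictly shrinking, so the telescoping is valid.
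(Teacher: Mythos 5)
Your proof is correct and follows essentially the same route as the paper: decompose the partition function according to whether $v$ is in the independent set, identify the two contributions as $Z_{\G_0}$ and $\lambda Z_{\G_k}$, and telescope $Z_{\G_k}/Z_{\G_0}$ into the product of marginals $\pr_{\G_{i-1}}(v_i\notin\mbI)$. Writing the argument through the odds ratio $R=\pr_{\G}(v\in\mbI)/\pr_{\G}(v\notin\mbI)$ is just a cosmetic repackaging of the paper's step of dividing $Z_{\G}=Z_{\G_0}+\lambda Z_{\G_k}$ by $Z_{\G_0}$.
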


\begin{proof}
We have
\begin{align}\label{eq:recursionIndSetZ}
Z_{\G}&=\sum_{I: v\notin I}\lambda^{|I|}+\sum_{I: v\in I}\lambda^{|I|}
=\sum_{I: I\subset V\setminus \{v\}}\lambda^{|I|}+
\lambda\sum_{I: I\subset V\setminus \{v,v_1,\ldots,v_k\} }\lambda^{|I|}
\end{align}
where everywhere the sums are over independent sets $I$. Note that
$\sum_{I: I\subset V\setminus \{v\}}\lambda^{|I|}=Z_{\G_0}$
and\\
$\sum_{I: I\subset V\setminus \{v,v_1,\ldots,v_k\}}\lambda^{|I|}=Z_{\G_k}$.
Dividing both sides of the identity (\ref{eq:recursionIndSetZ}) by $Z_{\G_0}$ we obtain
\begin{align*}
{Z_{\G}\over Z_{\G_0}}=1+\lambda {Z_{\G_k}\over Z_{\G_0}}
\end{align*}
It is immediate that $Z_{\G_0}/Z_{\G}=\pr_{\G}(v\notin \mbI)$. In order to interpret
$Z_{\G_k}/Z_{\G_0}$ similarly we rewrite it as
\begin{align*}
{Z_{\G_k}\over Z_{\G_0}}=\prod_{i=1}^k{Z_{\G_i}\over Z_{\G_{i-1}}}
\end{align*}
and note that $Z_{\G_i}/Z_{\G_{i-1}}=\pr_{\G_{i-1}}(v_i\notin \mbI)$.
Combining these observations we obtain (\ref{eq:RecursionIndSet}).

\end{proof}

The identity (\ref{eq:RecursionIndSet}) suggests a recursion for computing marginal probabilities
$\pr_{\G}(v\notin\mbI)$ approximately. The idea is to apply the identity recursively several times
and then set the initial values arbitrarily. One then establishes further that a correlation decay property
holds on this recursion which implies that any initialization of the values at the beginning of the recursion
leads to approximately correct values at the end of the recursion. This principle was underlying the algorithm
proposed in~\cite{weitzCounting} for computing approximately the number of independent sets in general
graphs. The original approach taken in~\cite{weitzCounting} was slightly different - first a self-avoiding
tree corresponding to the recursive computation tree described above is constructed. Then it is shown
that  $\pr(v\notin I)$ on this tree equals the same probability in the underlying graph.
The approach proposed here is slightly simpler as it bypasses the extra argument of showing equivalence
of two marginal probabilities.

We now provide details of this approach.
Given a finite graph $\G$, for every subgraph $\hat\G=(\hat V,\hat E)$ of $\G$, every vertex $v\in
\hat V$ and every  $t\in \mathbb{Z}_+$ we introduce a quantity
$\Phi_{\hat\G}(v,t)$ defined inductively as follows.
\begin{align}\label{eq:Phirecursion}
\Phi_{\hat \G}(v,t)=\left\{
                      \begin{array}{ll}
                        1, & \hbox{$t=0$;} \\
                        (1+\lambda)^{-1}, & \hbox{$t>0, ~N(v)=\emptyset,$} \\
                        (1+\prod_{1\le i\le k}\Phi_{\hat\G_{i-1}}(v_i,t-1))^{-1}, &
                    \hbox{$t>0, ~N(v)=\{v_1,\ldots,v_k\}\ne\emptyset$.}
                    \end{array}
                    \right.
\end{align}
Here again $\hat\G_0$ is induced by $\hat V\setminus \{v\}$ and $\hat\G_i$ is induced by
$\hat V\setminus \{v,v_1,\ldots,v_i\}$.
The recursion (\ref{eq:Phirecursion}) is naturally related to the identity (\ref{eq:RecursionIndSet}).
Specifically if $\Phi_{\hat\G_{i-1}}(v_i,t-1)=\pr_{\hat\G_{i-1}}(v_i\notin \mbI)$ for all $i$
then $\Phi_{\hat\G}(v,t)=\pr_{\hat\G}(v\notin \mbI)$. However, this will not occur in general,
as we set $\Phi_{\hat \G}(v,0)=1$, due to the lack of knowledge of actual values of the corresponding
probabilities.

Similarly to $\Phi$, we introduce values $\Psi_{\hat G}(v,t)$ with the only exception that
$\Psi_{\hat \G}(v,0)=0$ for all $\hat\G=(\hat V,\hat E)$ and $v\in\hat V$. The following lemma follows
from Theorem~\ref{theorem:RecursionIndSet} and the definitions of $\Phi$ and $\Psi$
using a simple induction argument
\begin{lemma}\label{lemma:UpperLowerIS}
For every $v$ and $t\in\Z_+$
\begin{align*}
\Psi_{\G}(v,2t)&\le \pr_{\G}(v\notin \mbI)\le \Phi_{\G}(v,2t), \\
\Phi_{\G}(v,2t+1)&\le \pr_{\G}(v\notin \mbI)\le \Psi_{\G}(v,2t+1)
\end{align*}
\end{lemma}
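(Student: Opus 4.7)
The plan is to prove the lemma by straightforward induction on $t$, exploiting the fact that the map
\[
f(x_1,\ldots,x_k) \;=\; \frac{1}{1+\lambda\prod_{i=1}^{k} x_i}
\]
appearing in both the exact recursion (\ref{eq:RecursionIndSet}) and in the defining recursion (\ref{eq:Phirecursion}) is coordinatewise \emph{decreasing} in each $x_i$ on $[0,1]$. Combined with the observation that $\Phi_{\hat\G}(v,t)$ and $\Psi_{\hat\G}(v,t)$ satisfy the \emph{same} recursion and differ only in the initialization at $t=0$, this monotonicity will force the bounds to alternate direction at each level of the recursion.

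More precisely, I would prove by induction on $t\ge 0$ the combined statement: for every finite graph $\hat\G=(\hat V,\hat E)$ and every $v\in\hat V$,
\begin{align*}
\Psi_{\hat\G}(v,2t)\ \le\ \pr_{\hat\G}(v\notin\mbI)\ \le\ \Phi_{\hat\G}(v,2t),\\
\Phi_{\hat\G}(v,2t+1)\ \le\ \pr_{\hat\G}(v\notin\mbI)\ \le\ \Psi_{\hat\G}(v,2t+1).
\end{align*}
The base case $t=0$ is immediate: $\Psi_{\hat\G}(v,0)=0\le \pr_{\hat\G}(v\notin\mbI)\le 1=\Phi_{\hat\G}(v,0)$, and for $t=1$ one applies the recursion once using these trivial initial bounds (or, equivalently, treats $t=1$ as part of the induction step). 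The isolated-vertex case $N(v)=\emptyset$ is handled separately: then $\pr_{\hat\G}(v\notin\mbI)=(1+\lambda)^{-1}=\Phi_{\hat\G}(v,t)=\Psi_{\hat\G}(v,t)$ for all $t\ge 1$, so the inequalities are equalities.

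For the inductive step, assume the claim at level $t-1$ and prove it at level $t$. Write $N(v)=\{v_1,\ldots,v_k\}$ and let $\hat\G_{i-1}$ be the induced subgraphs appearing in both (\ref{eq:RecursionIndSet}) and (\ref{eq:Phirecursion}). Suppose first that $t$ is even, so $t-1$ is odd; the inductive hypothesis gives
\[
\Phi_{\hat\G_{i-1}}(v_i,t-1)\ \le\ \pr_{\hat\G_{i-1}}(v_i\notin\mbI)\ \le\ \Psi_{\hat\G_{i-1}}(v_i,t-1),\qquad i=1,\ldots,k.
\]
Taking products and applying the decreasing map $x\mapsto (1+\lambda x)^{-1}$ flips the inequalities:
\[
\Psi_{\hat\G}(v,t)\ =\ f\bigl(\Psi_{\hat\G_{i-1}}(v_i,t-1)\bigr)\ \le\ \pr_{\hat\G}(v\notin\mbI)\ \le\ f\bigl(\Phi_{\hat\G_{i-1}}(v_i,t-1)\bigr)\ =\ \Phi_{\hat\G}(v,t),
\]
where the middle equality uses (\ref{eq:RecursionIndSet}). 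This is exactly the desired pair of inequalities at the even level $t$. The case $t$ odd is symmetric: the inductive hypothesis now supplies $\Psi_{\hat\G_{i-1}}(v_i,t-1)\le \pr_{\hat\G_{i-1}}(v_i\notin\mbI)\le \Phi_{\hat\G_{i-1}}(v_i,t-1)$, and one more application of $f$ yields $\Phi_{\hat\G}(v,t)\le \pr_{\hat\G}(v\notin\mbI)\le \Psi_{\hat\G}(v,t)$.

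There is no real obstacle here; the only subtlety is bookkeeping. One must check that the inductive hypothesis is applied to the \emph{correct} subgraph $\hat\G_{i-1}$ at each coordinate (so the induction must be phrased uniformly over all graphs, as above), and one must verify that the isolated-vertex branch of the definition is consistent with the recursive branch in the sense that both $\Phi$ and $\Psi$ collapse to the exact marginal $(1+\lambda)^{-1}$ for $t\ge 1$. Once these are in place, the monotonicity of $f$ does all the work, and the lemma follows.
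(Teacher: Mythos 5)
Your proof is correct and is essentially the paper's own argument: the paper just asserts that the lemma "follows from Theorem~\ref{theorem:RecursionIndSet} and the definitions of $\Phi$ and $\Psi$ using a simple induction argument," and your induction (phrased uniformly over all finite subgraphs, using that the update map is coordinatewise decreasing and that the two initializations $1$ and $0$ bracket the true marginal, so the bounds alternate at each level) is exactly that argument spelled out. The only note is that the displayed recursion (\ref{eq:Phirecursion}) omits the factor $\lambda$ in front of the product, evidently a typo, and your inclusion of $\lambda$ matches the intended definition and the exact identity (\ref{eq:RecursionIndSet}).
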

Next we provide bounds on the computational effort required to compute $\Phi$ and $\Psi$.
\begin{lemma}\label{lemma:ComputationalComplexityISGeneral}
For every finite graph $\G$ with degree $\le\Delta$, $v\in\G$ and $t$, the values
$\Phi_{\G}(v,t),\Psi_{\G}(v,t)$ can be computed in time $\exp(O(t\log \Delta))$,
where the constant in $O(\cdot)$ is universal.
\end{lemma}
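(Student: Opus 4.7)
The plan is to unfold the recursion (\ref{eq:Phirecursion}) into a computation tree rooted at $(\hat\G, v, t)$ and bound its size together with the per-node work.

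First, I would observe that the recursion is structural: the value $\Phi_{\hat\G}(v,t)$ depends only on values $\Phi_{\hat\G_{i-1}}(v_i, t-1)$ where $v_1, \ldots, v_k$ are the at most $\Delta$ neighbors of $v$ in $\hat\G$, and each $\hat\G_{i-1}$ is obtained by deleting at most $k \le \Delta$ vertices from $\hat\G$. Thus at each level the depth parameter strictly decreases by one and the number of recursive calls spawned is at most $\Delta$. Unrolling this by induction on $t$, one gets a computation tree of depth $t$ with branching factor bounded by $\Delta$, so the total number of nodes is at most $\sum_{j=0}^{t} \Delta^j \le 2\Delta^t$.

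Next I would account for the work performed at each node. Given the list $v_1,\ldots,v_k$ of neighbors and the recursively computed values $\Phi_{\hat\G_{i-1}}(v_i, t-1)$, producing $\Phi_{\hat\G}(v,t)$ requires at most $\Delta$ multiplications, one addition, and one reciprocal, for $O(\Delta)$ arithmetic operations. Extracting the neighbor list of $v$ in $\hat\G$ and forming the adjacency representation of each $\hat\G_i$ (which is just $\hat\G$ with a prescribed set of at most $\Delta$ vertices deleted) can be done in time polynomial in $|V(\hat\G)|$, but the key point is that it is bounded independently of $t$ by a polynomial in the size of the original graph; in fact one may simply pass down a compact descriptor (e.g.\ a bitmask of the vertices kept in $\hat\G$ relative to the original graph), so the per-node overhead is at most $\mathrm{poly}(\Delta)$ once one charges the graph bookkeeping appropriately. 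Combining this with the node count gives a total running time of $\mathrm{poly}(\Delta)\cdot \Delta^t = \exp\bigl(O(t\log\Delta)\bigr)$, with a universal constant, as required. The argument for $\Psi$ is identical, since the two quantities differ only in the base case ($\Psi_{\hat\G}(v,0)=0$) and obey the same recursion at levels $t\ge 1$.

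The proof is essentially routine once the tree unfolding is written down, so the only mild obstacle is being explicit about the data structure for subgraphs so that the per-node cost does not silently blow up. I would handle that by stating that each recursive call carries only a list of deleted vertices (relative to $\G$); this list has length at most $t\Delta$, so all neighbor queries and deletions can be performed in time $\mathrm{poly}(t,\Delta)$, which is absorbed into the exponential factor.
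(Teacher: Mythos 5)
Your proof is correct and follows the same idea the paper uses: the paper's own proof is the one-line remark that the bound ``follows immediately from the recursive definitions of $\Phi$ and $\Psi$,'' i.e.\ exactly the tree-unfolding with branching factor at most $\Delta$ and depth $t$ that you make explicit. Your additional care about representing subgraphs by a list of deleted vertices so the per-node bookkeeping stays $\mathrm{poly}(t,\Delta)$ is a harmless (and welcome) elaboration of what the paper leaves implicit.
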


\begin{proof}
The result follows immediately from the recursive definitions of $\Phi$ and $\Psi$.
\end{proof}

The crucial correlation decay property is formulated in the following proposition.
\begin{theorem}[\cite{weitzCounting}]\label{theorem:CorrelationDecayIS}
For every  $\Delta\ge 3$ and for every
\begin{align}\label{eq:lambdaBound}
\lambda<(\Delta-1)^{\Delta-1}/(\Delta-2)^{\Delta},
\end{align}
there exists $C=C(\lambda,\Delta),\rho=\rho(\lambda,\Delta)<1$ such that for every finite graph $\G=(V,E)$
with degree at most $\Delta$ and every  $v\in V, t\in \Z_+$:
\begin{align}\label{eq:UpperLowerIS}
|\log\Phi_{\G}(v,t)-\log\Psi_{\G}(v,t)|\le C\rho^t.
\end{align}
As a result, $\G$ satisfies exponential SSM for  $\lambda$ satisfying (\ref{eq:lambdaBound}).
\end{theorem}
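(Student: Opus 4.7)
The plan is to prove contraction of the recursion (\ref{eq:Phirecursion}) under an appropriate reparametrization of the messages, with the worst case being realized on the infinite $\Delta$-regular tree. First I would rewrite the recursion in terms of ratio messages $R_{\hat\G}(v,t)=\lambda\prod_{i}\Phi_{\hat\G_{i-1}}(v_i,t-1)$, so that $\Phi = 1/(1+R)$, and note that $R$ satisfies the recursion $R = \lambda\prod_i 1/(1+R_i)$. Unrolling this recursion for $t$ steps starting from $v$ produces a rooted tree of depth $t$ with branching bounded by $\Delta - 1$ at internal nodes, whose leaves are assigned the boundary value corresponding to $\Phi(\cdot,0)=1$ in the case of $\Phi_\G(v,t)$, and the boundary value corresponding to $\Psi(\cdot,0)=0$ in the case of $\Psi_\G(v,t)$. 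The task reduces to bounding, uniformly over such trees, the gap between the two root values.

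The key step is to construct a potential $\varphi\colon(0,\infty)\to\mathbb{R}$ and a constant $\rho=\rho(\lambda,\Delta)<1$ such that, in the transformed coordinate $y=\varphi(R)$, each step of the recursion is a $\rho$-contraction in the $\ell_\infty$ norm. Direct differentiation of $F(R_1,\dots,R_k)=\lambda\prod_i 1/(1+R_i)$ yields $\partial F/\partial R_i = -F/(1+R_i)$, so the row sum of absolute values of partial derivatives in the $y$-coordinates becomes a specific function of $\varphi'$ and the $R_i$'s. The goal is to choose $\varphi$ so that this row sum is strictly below $1$ uniformly for all $k\le \Delta-1$ and all $R_i$ in the feasible range, precisely when $\lambda < (\Delta-1)^{\Delta-1}/(\Delta-2)^\Delta$. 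A symmetrization/convexity argument reduces the worst case to the symmetric configuration with $k=\Delta-1$ equal message values, which corresponds exactly to the scalar fixed-point equation $R = \lambda(1+R)^{-(\Delta-1)}$ of the $\Delta$-regular tree. The stated threshold is precisely the value of $\lambda$ at which $|F'(R^*)|=1$ for this scalar map, so strictly below the threshold such a contractive $\varphi$ can be engineered.

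Given such a $\varphi$, induction on $t$ bounds $|\varphi(R_v^{(\Phi)})-\varphi(R_v^{(\Psi)})|$ by $\rho^t$ times the initial leaf gap. Since after one step of the recursion every message $R$ lies in a compact subset of $(0,\infty)$ on which $\varphi$ is smooth with nonvanishing derivative, this bound transfers to $|\log\Phi_\G(v,t)-\log\Psi_\G(v,t)|\le C\rho^t$ with $C=C(\lambda,\Delta)$, which is (\ref{eq:UpperLowerIS}). Exponential SSM then follows immediately: for any finite graph $\G$ with a vertex $v$ and two boundary conditions differing only at distance at least $r$ from $v$, the first $r-1$ levels of the unrolled recursion agree, so the two computed root values differ by at most $C\rho^r$; by Lemma~\ref{lemma:UpperLowerIS} this controls the difference in the true marginal $\pr_\G(v\notin\mbI)$ under the two boundary conditions.

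The main technical obstacle is constructing the potential $\varphi$ that gives uniform contraction over the \emph{entire} feasible range of messages (not merely near the fixed point) all the way up to the sharp threshold $\lambda<(\Delta-1)^{\Delta-1}/(\Delta-2)^\Delta$. A naive choice such as $\varphi=\log$ only yields contraction for $\lambda$ in a strictly smaller range; reaching the sharp bound requires a carefully tailored $\varphi$ of the sort used by Weitz, combined with the symmetrization step that verifies the symmetric $(\Delta-1)$-ary configuration on the regular tree is indeed the extremal one.
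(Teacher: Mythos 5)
Your plan is a genuinely different route from the paper's. The paper does not reprove the contraction from scratch: it cites Weitz's argument that the discrepancy $|\log\Phi_{\G}(v,t)-\log\Psi_{\G}(v,t)|$ on an arbitrary graph of degree at most $\Delta$ is dominated by the same quantity on the regular tree $\T_{\Delta,t}$, and then invokes the classical results of Spitzer and Kelly on the hard-core model on the regular tree to obtain $C$ and $\rho<1$; exponential SSM then follows from Proposition~\ref{prop:hardcorereduction} together with the observation that $\Phi_{\G}(v,t),\Psi_{\G}(v,t)$ depend only on the radius-$t$ ball around $v$. You instead propose a self-contained one-step contraction of the ratio recursion $R=\lambda\prod_i(1+R_i)^{-1}$ in a potential coordinate $y=\varphi(R)$, with a symmetrization step identifying the symmetric $(\Delta-1)$-ary configuration as extremal. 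Your identification of the threshold as the point where the scalar map $R\mapsto\lambda(1+R)^{-(\Delta-1)}$ has derivative of modulus $1$ at its fixed point is correct, and your deduction of SSM from the contraction estimate matches the paper's (modulo explicitly invoking Proposition~\ref{prop:hardcorereduction} to turn conditioning into vertex removal).

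However, as written the proposal has a genuine gap, and it sits exactly where the theorem's content lies: you assert, but do not establish, (i) the existence of a potential $\varphi$ for which the row sum of $|\partial(\varphi\circ F\circ\varphi^{-1})/\partial y_i|$ is bounded by some $\rho<1$ uniformly over all feasible messages and all $k\le\Delta-1$, for \emph{every} $\lambda$ strictly below $(\Delta-1)^{\Delta-1}/(\Delta-2)^{\Delta}$, and (ii) the claim that a ``symmetrization/convexity argument'' reduces the worst case to equal $R_i$'s. Point (ii) is not a routine convexity fact: whether the symmetric configuration is extremal depends on the choice of $\varphi$ (for the naive choices, e.g. $\varphi=\log$, it is the uniform contraction itself that fails well below the threshold), and engineering a $\varphi$ for which the multivariate decay bound collapses to a univariate one at the sharp threshold is precisely the delicate step in the potential-method proofs. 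You acknowledge this as ``the main technical obstacle,'' which means the hardest step is deferred rather than proved; everything else in your outline (unrolling the recursion into a depth-$t$ tree with branching at most $\Delta-1$, transferring the bound from $\varphi$-coordinates to logarithms on a compact message range, and the finite-range argument for SSM) is correct but routine. To close the gap you would either have to exhibit such a $\varphi$ and verify the uniform bound $\rho(\lambda,\Delta)<1$ up to the stated threshold, or follow the paper's (Weitz's) path: prove that the regular tree is extremal for the discrepancy and then quote the tree fixed-point convergence results of Spitzer and Kelly.
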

\begin{proof} The details of the proof can be found in \cite{weitzCounting}. It is shown there
that the absolute value in (\ref{eq:UpperLowerIS}) is upper bounded by the same quantity, when applied
to $\G=\T_{\Delta,t}$ - the $\Delta$-regular depth-$t$ tree. Then a classical results by Spitzer~\cite{Spitzer75}
and Kelly~\cite{KellyHardCore} are invoked to show the existence of $C$ and $\rho$. The existence of $\rho$ then
implies SSM via Proposition~\ref{prop:hardcorereduction} and observing that changing values of $\sigma$
for nodes $u$ which have distance bigger than $t$ from a given node $v$, does not affect the values
of $\Phi_{\G}(v,t)$ and $\Psi_{\G}(v,t)$.
\end{proof}

\subsection{Free energy and surface pressure on $\Z^d$. Numerical results}
We are now equipped to obtain bounds  on the free energy and the surface pressure for the hard-core
model on $\Z^d$.

Denote by $\Phi(t)$ and $\Psi(t)$ the values of $\Phi_{\G}(v,t),\Psi_{\G}(v,t)$ when applied
to a graph $\G=\Z^d_{\prec 0}\cap B_n$, for sufficiently large $n$ and $v=0$. Observe that
the values $\Phi_{\G}(v,t),\Psi_{\G}(v,t)$ are the same for all values of $n$ sufficiently larger than $t$
(for example $n\ge t+1$ suffices). Thus the notations are well-defined.
The following relations are the basis for computing bounds on the free energy and surface pressure.
\begin{coro}\label{coro:UpperLowerISLattice}
For every $t\in\Z_+$ and $\lambda$ satisfying (\ref{eq:lambdaBound})
\begin{align}
-\log \Phi(2t)&\le \mathcal{P}(d,\lambda)\le -\log \Psi(2t) \label{eq:UpperLowerISLattice1}\\
-\log \Psi(2t+1)&\le \mathcal{P}(d,\lambda)\le -\log \Phi(2t+1) \label{eq:UpperLowerISLattice2}.
\end{align}
\end{coro}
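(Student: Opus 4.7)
The plan is to combine the representation of the free energy from Corollary~\ref{corollary:IS} with the pointwise sandwich bounds of Lemma~\ref{lemma:UpperLowerIS}, and then pass from finite truncations to the infinite graph $\Z^d_{\prec 0}$ using exponential SSM. Since Corollary~\ref{corollary:IS} gives $\mathcal{P}(d,\lambda)=-\log\pr_{\Z^d_{\prec 0}}(0\notin\mbI)$, it suffices to show
\begin{align*}
\Psi(2t)\le \pr_{\Z^d_{\prec 0}}(0\notin \mbI)\le \Phi(2t),\qquad
\Phi(2t+1)\le \pr_{\Z^d_{\prec 0}}(0\notin \mbI)\le \Psi(2t+1),
\end{align*}
and then take $-\log$ of each chain of inequalities, which gives (\ref{eq:UpperLowerISLattice1}) and (\ref{eq:UpperLowerISLattice2}) at once.

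The first step is to observe that the recursive definition (\ref{eq:Phirecursion}) of $\Phi_{\hat\G}(v,t)$ (and the analogous one for $\Psi$) only inspects vertices at graph distance at most $t$ from $v$. Consequently, for every $n>t$ one has $\Phi_{\Z^d_{\prec 0}\cap B_n}(0,t)=\Phi(t)$ and $\Psi_{\Z^d_{\prec 0}\cap B_n}(0,t)=\Psi(t)$, so the quantities $\Phi(t),\Psi(t)$ are genuinely well-defined on the infinite graph. Applying Lemma~\ref{lemma:UpperLowerIS} to the finite induced subgraph $\Z^d_{\prec 0}\cap B_n$ with $n>2t+1$ then yields
\begin{align*}
\Psi(2t)\le \pr_{\Z^d_{\prec 0}\cap B_n}(0\notin \mbI)\le \Phi(2t),
\end{align*}
and similarly with the inequalities reversed for $2t+1$.

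The remaining step is to pass to $n\rightarrow\infty$. By Theorem~\ref{theorem:CorrelationDecayIS} the hard-core model on $\Z^d$ satisfies exponential SSM whenever $\lambda$ satisfies (\ref{eq:lambdaBound}); by Proposition~\ref{prop:hardcorereduction} this transfers to the induced subgraph $\Z^d_{\prec 0}$. SSM in particular implies $\pr_{\Z^d_{\prec 0}\cap B_n}(0\notin \mbI)\rightarrow \pr_{\Z^d_{\prec 0}}(0\notin \mbI)$ as $n\rightarrow\infty$, so the sandwich bounds survive the limit. Combining with (\ref{eq:PressureIS1}) and taking logarithms delivers the corollary.

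The only potentially delicate point is the transfer of SSM from $\Z^d$ to the non-translation-invariant induced subgraph $\Z^d_{\prec 0}$, which is needed to justify the limit of the finite-graph marginals appearing on the right-hand side of (\ref{eq:PressureIS1}). This is precisely what the first half of Proposition~\ref{prop:hardcorereduction} supplies, so once that proposition is in hand the argument becomes essentially a one-line passage to the limit.
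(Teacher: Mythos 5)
Your proposal is correct and follows essentially the same route as the paper: apply Lemma~\ref{lemma:UpperLowerIS} on the finite truncations $\Z^d_{\prec 0}\cap B_n$, use exponential SSM (Theorem~\ref{theorem:CorrelationDecayIS}, together with Proposition~\ref{prop:hardcorereduction}) to pass the sandwich bounds to the limit $\pr_{\Z^d_{\prec 0}}(0\notin\mbI)$, and conclude via the representation (\ref{eq:PressureIS1}). Your extra remarks on the well-definedness of $\Phi(t),\Psi(t)$ and the transfer of SSM to the subgraph are exactly the points the paper states before or inside its (terser) proof.
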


\begin{proof}
By Theorem~\ref{theorem:CorrelationDecayIS} we have exponential SSM.
Thus
\begin{align*}
\pr_{\Z^d_{\prec 0}}(0\notin I)=\lim_{n\rightarrow\infty}\pr_{B_n\cap \Z^d_{\prec 0}}(0\notin I).
\end{align*}
$B_n\cap \Z^d_{\prec 0}$ is finite graph for which bounds from Lemma~\ref{lemma:UpperLowerIS} are applicable.
\end{proof}

Our algorithm for computing $\mathcal{P}(d,\lambda)$ and $s\mathcal{P}(\Z^d,\lambda)$
is based on relations (\ref{eq:UpperLowerISLattice1})
and (\ref{eq:UpperLowerISLattice2}), and will be called \emph{Sequential Cavity Algorithm} or shortly SCA.

We have numerically computed values $\Phi(t),\Psi(t)$ for the cases $d=2,3,4$ using the chessboard pattern method.
Our results provide the following bounds on the free energy for the case $\lambda=1$. Since previous
bounds were stated in terms of $\exp(\mathcal{P}(d,1)$, we  do the same here:
\begin{align*}
&1.503034\le \exp(\mathcal{P}(2,1))\le 1.503058 \\
&1.434493\le \exp(\mathcal{P}(3,1))\le 1.449698 \\
&1.417583\le \exp(\mathcal{P}(4,1))\le 1.444713
\end{align*}
The computations were done at the level $t=27$ for the case $d=2$, $t=16$ for the case $d=3$ and $t=12$
for the case $d=4$.
Our lower bound for the case $d=2$ is weaker than the previous best known $1.503047782$~\cite{CalkinWilf},
which is already very close to the presumably correct but unproven value stated in~\cite{Baxter99}.
However, our upper bound is stronger than the previous best known $1.5035148$~\cite{CalkinWilf}.
We are not aware of any estimates for the case $d=3$. Thus we believe our bounds are the best known.
We have not done computations of the surface pressure and we are not aware of any previously existing benchmarks.
Note that in the case  $d=3$ we have $\Delta=6$, and $\lambda=1$ no longer satisfies (\ref{eq:lambdaBound}).
Thus we have no guarantee that SCA will provide converging estimates as $t$ increases.
The correctness of our bounds for this case is guaranteed by Corollary~\ref{coro:MainResultUpperLower}.
It is encouraging to
see that the bounds are close and based on this fact we conjecture that $\lambda=1$ corresponds to the uniqueness
regime.

It is instructive to compare our numerical results, which were obtained using the chess-pattern approach
(identity (\ref{eq:PressureIS1chesspattern})) with
results which could be obtained directly from (\ref{eq:PressureIS1}). The computations based on (\ref{eq:PressureIS1})
for the case $d=2,\lambda=1$ at depth $t=12$ lead to bounds
$1.0942\le \exp(\mathcal{P}(2,1))\le 1.8377$. At the same time, the computations using chess pattern method
at the depth only $t=3$ already lead to a much tighter bounds
$1.4169\le \exp(\mathcal{P}(2,1))\le 1.5565$.

Notice, that while Theorem~\ref{theorem:CorrelationDecayIS} is not used in computing actual bounds on the
free energy and surface pressure, it provides the guarantee for the quality of such bounds. Let us
use it now to analyze the computation effort required to obtain a particular level of accuracy in bounds.

\begin{prop}\label{prop:ComputationalComplexityIS}
For every $d,\lambda<(d-1)^{d-1}/(d-2)^d$ and  $\epsilon>0$  SCA produces an $\epsilon$-additive
estimate of $\mathcal{P}(d,\lambda)$ and $s\mathcal{P}(\Z^d,\lambda)$ in time
$({1\over \epsilon})^{O(1)}$, where
the constant in $O(\cdot)$ depends on $\lambda$ and $d$.
\end{prop}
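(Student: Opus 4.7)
The plan is to combine three ingredients already at hand: the correlation-decay bound of Theorem~\ref{theorem:CorrelationDecayIS}, the per-call complexity estimate of Lemma~\ref{lemma:ComputationalComplexityISGeneral}, and the exponential convergence of the infinite sum that represents the surface pressure.

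For the free energy I would start from the bracket provided by Corollary~\ref{coro:UpperLowerISLattice} and its odd counterpart. Since $\Phi(t),\Psi(t)\ge c^*>0$ uniformly (a consequence of Assumption~\ref{assumption:s*}), the logarithms are Lipschitz in their arguments, and Theorem~\ref{theorem:CorrelationDecayIS} gives $|\log\Phi(t)-\log\Psi(t)|\le C\rho^t$ with $\rho=\rho(\lambda,d)<1$. Choosing $t=\lceil\log_{1/\rho}(C/\epsilon)\rceil=O(\log(1/\epsilon))$ forces the width of the bracket to be at most $\epsilon$. By Lemma~\ref{lemma:ComputationalComplexityISGeneral}, with $\Delta=2d$, producing $\Phi(t)$ and $\Psi(t)$ takes time $\exp(O(t\log d))=(1/\epsilon)^{O(\log d)}$, which is $(1/\epsilon)^{O(1)}$ with an implicit constant depending on $\lambda$ and $d$.

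For the surface pressure I would use the representation (\ref{eq:SurfacePressureGeneral}), which is a $d$-fold finite linear combination of two infinite sums whose summands are differences of the form $\log\pr(\sigma_0=s^*|\mathcal{E}_{s^*})-\log\pr_{\Z^d_{j,\pm k,\pm}}(\sigma_0=s^*|\mathcal{E}_{s^*})$. By Proposition~\ref{prop:hardcorereduction} each half-space subgraph inherits exponential SSM from $\Z^d$, so each summand is bounded by $C_1\exp(-\gamma k)$. Truncating every infinite sum at $K=O(\log(1/\epsilon))$ therefore contributes a tail error of at most $\epsilon/2$. It remains to compute each of the $O(dK)$ surviving terms to additive accuracy $\epsilon/(dK)$. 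Each such term is a pair of marginals at the origin of an infinite half-space; running SCA on its intersection with $B_n$ for any $n$ exceeding the recursion depth yields the identical value, and the free-energy analysis above shows that depth $t=O(\log(dK/\epsilon))=O(\log(1/\epsilon))$ suffices. Each marginal therefore costs $(1/\epsilon)^{O(1)}$, and summing over $O(dK)$ evaluations preserves the bound $(1/\epsilon)^{O(1)}$.

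The main delicacy, more bookkeeping than real obstacle, is the interaction between the truncation error and the per-term error in the surface-pressure calculation: one must check that the Lipschitz constant converting approximation of marginal probabilities into approximation of their logarithms is bounded uniformly across all half-space subgraphs and all truncation levels. This is ensured by the same uniform lower bound $c^*>0$ from Assumption~\ref{assumption:s*} together with Proposition~\ref{prop:hardcorereduction}. Beyond that, the argument is essentially the composition of Theorem~\ref{theorem:CorrelationDecayIS}, Lemma~\ref{lemma:ComputationalComplexityISGeneral}, and the geometric tail estimate on the surface-pressure sum.
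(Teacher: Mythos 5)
Your proposal is correct and follows essentially the same route as the paper: depth $t=O(\log(1/\epsilon))$ from Theorem~\ref{theorem:CorrelationDecayIS} combined with the $\exp(O(t\log\Delta))$ cost of Lemma~\ref{lemma:ComputationalComplexityISGeneral} for the free energy, and for the surface pressure a truncation of the sum in (\ref{eq:SurfacePressureGeneral}) at $O(\log(1/\epsilon))$ terms with each retained term computed by SCA to accuracy $\epsilon$ divided by the number of terms. The only cosmetic difference is that you justify the geometric tail via SSM inherited by half-space subgraphs and an explicit Lipschitz step using $c^*$, whereas the paper invokes the computation-tree decay bound (already stated for logarithms) directly; these are equivalent here.
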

\ignore{This estimate has a deceivingly good depends on $d$, especially compared to
numerical effort $\exp(O((1/\epsilon)^{d-1}))$ of the transfer matrix mathod.
In fact it is important to keep in mind that
our method works only for $\lambda$ in the range (\ref{eq:lambdaBound}) which pushes $\lambda$ down
and affects $\rho$, as $d$ increases.
The transfer matrix method, on the other hand, does not require an upper bound on $\lambda$.

We see that for fixed
$\lambda$ and $d$ we have an implied fixed $\rho$ and then, treating $d$ and $\rho$ as constants,
SCA has performance $({1\over \epsilon})^{O(\cdot)}$ as opposed to $\exp(O((1/\epsilon)^{d-1}))$ of the
transfer matrix method.
}

\begin{proof}
Applying Theorem~\ref{theorem:CorrelationDecayIS}, an additive error $\epsilon$ is achieved
provided that $C\rho^t<\epsilon$ or \\
$t\ge \log(C/\epsilon)\log^{-1}(1/\rho)=O(\log(1/\epsilon))$.
Hence the result for free energy
follows from Lemma~\ref{lemma:ComputationalComplexityISGeneral}.

For surface pressure observe that applying Theorem~\ref{theorem:CorrelationDecayIS}
\begin{align*}
\Big|\log \pr_{\Z^d_{\prec 0}}(\sigma_{0}=0)-\log \pr_{\Z^d_{j,k,+,\prec 0}}(\sigma_{0}=0)\Big|\le C\rho^k.
\end{align*}
A similar bound holds for $\pr_{\Z^d_{j,-k,-,\prec 0}}(\sigma_{0}=0)$. Thus if we take $k_0$ such that
$C\rho^{k_0}/(1-\rho)<\epsilon$, then the partial sum in (\ref{eq:SurfacePressureGeneral}) corresponding
to terms $k\ge k_0$ is at most $2\epsilon$. The required $k_0$ is
$O\big(\log((1-\rho)C/\epsilon)/\log(1/\rho)\big)=O(\log(1/\epsilon))$.
For the remaining terms $k<k_0$ we compute $\pr_{\Z^d_{j,k,+,\prec 0}}(\sigma_{0}=0)$ and
$\pr_{\Z^d_{j,-k,-,\prec 0}}(\sigma_{0}=0)$ using SCA with accuracy $\hat\epsilon=\epsilon/k_0$.
Since $\log(1/\hat\epsilon)=\log(1/\epsilon)+\log\log(1/\epsilon)=O(\log(1/\epsilon))$, the
result then follows from our estimate for computing  $\mathcal{P}(d,\lambda)$.
\end{proof}

\section{Monomer-dimer (matching) model}\label{section:matchings}
The monomer-dimer model is defined by spin values $S=\{0,1\}$ assigned to edges of a graph $G=(V,E)$.
A set of edges $M\subset E$ is a matching if no two edges in $M$ are incident.
Sometimes term partial matching is used to contrast with full matching, which is a matching with size $|V|/2$
(namely every node is incident to an edge in the matching).
The edges of $M$ are called dimers
and nodes in $V$ which are not incident to any edge in $M$ are called monomers.
We set $H(0,0)=H(0,1)=H(1,0)=h(0)=0,H(1,1)=\infty, h(1)=\beta$. The Gibbs measure is defined via (\ref{eq:GibbsEdges}).
As in the case of hard-core model, it is convenient to introduce
$\lambda=\exp(-\beta)>0$.  Similarly to the hard-core model we have
for finite graphs $\G$
\begin{align}
Z_{\G}=\sum_{M}\lambda^{|M|} \label{eq:PartitionFunctionM}
\end{align}
were the summation is over all matchings in $\G$. The summation in
(\ref{eq:PartitionFunctionM}) is called a matching polynomial in the combinatorics literature.
We denote by $\mbM$ a random matching chosen according to the Gibbs measure, when it is unique. In the case of finite
graphs
\begin{align*}
\pr_{\G}(\mbM=M)=Z_{\G}^{-1}\lambda^{|M|}.
\end{align*}

The monomer-dimer model is a close relative of the hard-core model, even though its properties are substantially different.
For example this model does not exhibit a phase transition and is always in the uniqueness
regime~\cite{HeilmanLieb}. Moreover, it satisfies
the SSM property for all activities $\lambda$ as we shall shortly see. Corresponding analogues of
Proposition~\ref{prop:hardcorereduction} will be stated later once exponential SSM is asserted.

\subsection{Recursion, sequential cavity algorithm and correlation decay}
We now establish an analogue of (\ref{eq:RecursionIndSet}) for the monomer-dimer model.
The proof of this result can be found in~\cite{BayatiGamarnikKatzNairTetali} and is omitted. It is similar
to the proof of (\ref{eq:RecursionIndSet}).
In the following, with a slight abuse of notation we write $v\in M$ if matching $M$ contains an edge
incident to $v$.
\begin{theorem}\cite{BayatiGamarnikKatzNairTetali}\label{theorem:RecursionMatchings}
For every  finite graph $\G=(V,E)$ and $v\in V$
\begin{align}\label{eq:RecursionMatching}
\pr_{\G}(v\notin \mbM)={1\over 1+\lambda\sum_{u\in N_{\G}(v)}\pr_{\G_0}(u\notin \mbM)}
\end{align}
where  $\G_0$ is induced by $V\setminus \{v\}$ and $\sum_{u\in N_{\G}(v)}=0$ when $v$ is an isolated node.
\end{theorem}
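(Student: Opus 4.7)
The plan is to imitate, step for step, the derivation of the hard-core recursion in Theorem~\ref{theorem:RecursionIndSet}, with the simple modification that the event ``$v$ is covered by $\mbM$'' can now happen through any one of the $|N_\G(v)|$ edges incident to $v$ rather than through a single occupancy indicator. First I would split the partition function according to the status of $v$:
\begin{equation*}
Z_\G \;=\; \sum_{M:\,v\notin M}\lambda^{|M|} \;+\; \sum_{u\in N_\G(v)}\sum_{M:\,(v,u)\in M}\lambda^{|M|},
\end{equation*}
where each inner sum is over matchings $M$ of $\G$.

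Next I would identify each piece combinatorially. The first sum is exactly the matching polynomial of the induced subgraph $\G_0 = \G\setminus\{v\}$, so it equals $Z_{\G_0}$. For each fixed neighbor $u\in N_\G(v)$, requiring $(v,u)\in M$ forbids every other edge at $v$ and at $u$, so the remaining edges of $M$ form an arbitrary matching of $\G\setminus\{v,u\}$; this yields a weight-preserving bijection (up to the single factor $\lambda$ contributed by the edge $(v,u)$) and hence a contribution $\lambda Z_{\G\setminus\{v,u\}}$. Combining,
\begin{equation*}
Z_\G \;=\; Z_{\G_0} \;+\; \lambda\sum_{u\in N_\G(v)} Z_{\G\setminus\{v,u\}}.
\end{equation*}
The final step is to divide through by $Z_{\G_0}$ and read the ratios as marginal probabilities: $Z_{\G_0}/Z_\G = \pr_\G(v\notin\mbM)$ by definition of the Gibbs measure, and $Z_{\G\setminus\{v,u\}}/Z_{\G_0} = \pr_{\G_0}(u\notin\mbM)$ because $\G_0\setminus\{u\} = \G\setminus\{v,u\}$. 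Solving for $\pr_\G(v\notin\mbM)$ yields (\ref{eq:RecursionMatching}).

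I do not anticipate a real obstacle: the argument is a one-line counting identity together with two book-keeping moves, and it is essentially the same as the hard-core version. The only point worth verifying carefully is the edge-removal bijection in the second step, which is immediate but is what distinguishes the monomer-dimer recursion from the independent-set one, in that the right-hand side becomes a \emph{sum} over neighbors rather than a product. The degenerate case $N_\G(v)=\emptyset$ needs no separate treatment: the empty-sum convention yields $\pr_\G(v\notin\mbM)=1$, correctly reflecting that an isolated node is never covered by a matching.
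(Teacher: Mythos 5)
Your proof is correct and follows exactly the route the paper indicates: it mirrors the hard-core argument of Theorem~\ref{theorem:RecursionIndSet}, decomposing $Z_\G$ according to the status of $v$ and reading off $Z_{\G_0}/Z_\G=\pr_\G(v\notin\mbM)$ and $Z_{\G\setminus\{v,u\}}/Z_{\G_0}=\pr_{\G_0}(u\notin\mbM)$ (here even without the telescoping product needed in the independent-set case). The paper omits this proof, deferring to~\cite{BayatiGamarnikKatzNairTetali} and noting it is similar to the hard-core recursion, so your write-up fills that gap correctly, including the isolated-node convention.
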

The further development in this subsection mirrors the one of Subsection~\ref{subsection:AlgorithmIS},
yet the conclusion will be different - the monomer-dimer model exhibits SSM for \emph{all} values
of $\lambda$. This will lead to an algorithm for computing the free energy and surface pressure
for monomer-dimer model for every value $\lambda>0$.

Given a finite graph $\G$, for every subgraph $\hat\G=(\hat V,\hat E)$ of $\G$, every node $v\in
\hat\G$ and every  $t\in \mathbb{Z}_+$ we introduce a quantity
$\Phi_{\hat\G}(v,t)$ defined inductively as follows. In the context of monomer-dimer model
this quantity stands for (approximate) probability that $v\notin \mbM$ in the subgraph $\hat\G$.
\begin{align}\label{eq:PhirecursionM}
\Phi_{\hat \G}&(v,t)
=\left\{
                      \begin{array}{ll}
                        1, & \hbox{$t=0$ or $N(v)=\emptyset$;} \\
                        (1+\lambda\sum_{1\le i\le k}\Phi_{\G_0}(v_i,t-1))^{-1}, &
                    \hbox{$t>0$ and  $N(v)=\{v_1,\ldots,v_k\}\ne\emptyset$.}
                    \end{array}
                    \right.
\end{align}
Here $\hat\G_{0}$ is induced by $\hat V\setminus \{v\}$.
If $\Phi_{\G_0}(v_i,t-1)=\pr_{\G_0}(v_i\notin \mbM)$ for all $i$
then $\Phi_{\G}(v,t)=\pr_{\G}(v\notin \mbM)$.

Similarly,  introduce  $\Psi_{\hat G}(v,t)$ with the only exception that
$\Psi_{\hat \G}(v,0)=0$ for all $\hat\G$ and $v$. The following proposition follows
from Theorem~\ref{theorem:RecursionMatchings} and the definitions of $\Phi$ and $\Psi$
using a simple induction argument.
\begin{lemma}\label{prop:UpperLowerM}
For every $v\in V, t\in\Z_+$
\begin{align*}
\Psi_{\G}(v,2t)&\le \pr_{\G}(v\notin \mbM)\le \Phi_{\G}(v,2t) \\
\Phi_{\G}(v,2t+1)&\le \pr_{\G}(v\notin \mbM)\le \Psi_{\G}(v,2t+1).
\end{align*}
\end{lemma}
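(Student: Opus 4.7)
The plan is a straightforward induction on the depth parameter $t$, which reduces everything to a single monotonicity observation about the recursion (\ref{eq:PhirecursionM}).

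The key observation is that the map
\begin{equation*}
f(x_1,\ldots,x_k) \;=\; \frac{1}{1+\lambda\sum_{i=1}^k x_i}
\end{equation*}
is strictly decreasing in each coordinate on $[0,1]^k$. Writing Theorem~\ref{theorem:RecursionMatchings} as
$\pr_{\G}(v\notin \mbM)=f\bigl(\pr_{\G_0}(v_1\notin \mbM),\ldots,\pr_{\G_0}(v_k\notin \mbM)\bigr)$
and noting that (\ref{eq:PhirecursionM}) is $\Phi_{\hat \G}(v,t)=f(\Phi_{\hat\G_0}(v_1,t-1),\ldots,\Phi_{\hat\G_0}(v_k,t-1))$, with the analogous identity for $\Psi$, one sees that applying one step of the recursion reverses inequalities, which is exactly why the direction of the bounds flips with parity.

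The plan is to prove the unified statement $(\star_s)$: for every finite $\G$, every $v\in V(\G)$, and $s\in\Z_+$,
\begin{align*}
\Psi_{\G}(v,s)\le \pr_{\G}(v\notin \mbM)\le \Phi_{\G}(v,s) &\quad \text{if $s$ is even,} \\
\Phi_{\G}(v,s)\le \pr_{\G}(v\notin \mbM)\le \Psi_{\G}(v,s) &\quad \text{if $s$ is odd.}
\end{align*}
The two displayed inequalities of the Lemma are just $(\star_{2t})$ and $(\star_{2t+1})$. Base case $s=0$: by definition $\Psi_{\G}(v,0)=0\le \pr_{\G}(v\notin\mbM)\le 1=\Phi_{\G}(v,0)$. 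For the inductive step, assume $(\star_{s-1})$ with $s\ge 1$. Handle first the trivial cases where $N_{\G}(v)=\emptyset$: then $\pr_{\G}(v\notin\mbM)=1$ and, by the convention in (\ref{eq:PhirecursionM}) together with the convention for $\Psi$ inherited for $t\ge 1$, $\Phi_{\G}(v,s)=\Psi_{\G}(v,s)=1$, so the inequalities hold trivially. Otherwise, write $N_{\G}(v)=\{v_1,\ldots,v_k\}$ and let $\G_0$ be induced by $V\setminus\{v\}$. By the inductive hypothesis $(\star_{s-1})$ applied to each $v_i$ in $\G_0$, the quantities $\pr_{\G_0}(v_i\notin\mbM)$ are sandwiched between $\Phi_{\G_0}(v_i,s-1)$ and $\Psi_{\G_0}(v_i,s-1)$ in the direction dictated by the parity of $s-1$. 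Feeding these bounds coordinate-wise into $f$ and using that $f$ is decreasing in each coordinate, the inequalities flip direction, yielding exactly $(\star_s)$ for $v$ in $\G$.

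There is no real obstacle: once one notices the monotonicity of $f$ and carries the parity bookkeeping carefully, the induction is automatic. The only minor subtlety is the boundary convention for $\Psi_{\hat\G}(v,t)$ when $N(v)=\emptyset$ and $t\ge 1$ (which must be read as $1$, consistent with the phrase ``the only exception'' at $t=0$); this ensures the base-case-like step for isolated vertices does not break the induction.
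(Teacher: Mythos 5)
Your proof is correct and is exactly the ``simple induction argument'' the paper invokes without spelling out: the paper states the lemma follows from Theorem~\ref{theorem:RecursionMatchings} and the definitions of $\Phi$ and $\Psi$, and your induction on $t$ using the coordinatewise monotone decrease of $x\mapsto(1+\lambda\sum_i x_i)^{-1}$ (which flips the inequalities and produces the parity alternation) is the intended argument. Your handling of the base case and of isolated vertices, including reading $\Psi_{\hat\G}(v,t)=1$ for $t\ge 1$ when $N(v)=\emptyset$, is also consistent with the paper's conventions.
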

Next we provide bounds on the computational effort required to compute $\Phi$ and $\Psi$.

\begin{lemma}\label{lemma:ComputationalComplexityMGeneral}
For every finite graph $\G$ with degree $\Delta$, $v\in\G$ and $t$, the values
$\Phi_{\G}(v,t),\Psi_{\G}(v,t)$ can be computed in time $\exp(O(t\log \Delta))$, where
the constant in $O(\cdot)$ is universal.
\end{lemma}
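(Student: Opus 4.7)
The plan is a direct bound on the size of the recursion tree induced by (\ref{eq:PhirecursionM}). Let $T(t)$ denote the worst-case time to evaluate $\Phi_{\hat\G}(v,t)$ over all subgraphs $\hat\G$ of $\G$ with maximum degree at most $\Delta$ and all vertices $v$. From (\ref{eq:PhirecursionM}), the base cases $t=0$ and $N(v)=\emptyset$ give $T(0)=O(1)$ (a single arithmetic operation). For $t\ge 1$ and a vertex $v$ with $k\le \Delta$ neighbors, evaluating $\Phi_{\hat\G}(v,t)$ requires computing $\Phi_{\hat\G_0}(v_i,t-1)$ for each $i=1,\ldots,k$ followed by $O(\Delta)$ arithmetic (forming the weighted sum, multiplying by $\lambda$, adding one, inverting). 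This yields the recurrence
\begin{equation*}
T(t)\le \Delta\cdot T(t-1)+O(\Delta).
\end{equation*}
Unrolling gives $T(t)=O(\Delta^{t+1})=\exp(O(t\log\Delta))$, with the constant inside $O(\cdot)$ absolute.

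The main bookkeeping point is the efficient representation of the nested subgraphs $\hat\G_0$ appearing at depth $k$ of the recursion; after $k$ recursive calls, exactly $k$ specific vertices have been deleted. I would handle this by threading a list of deleted vertices of length at most $t$ through each recursive call, and when enumerating the neighbors of the current vertex in the current subgraph simply filter the list against it at an additive $O(\Delta)$ cost per node. This does not change the $T(t)\le \Delta\cdot T(t-1)+O(\Delta)$ recurrence.

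The same argument applies verbatim to $\Psi_{\hat\G}(v,t)$: only the base case at $t=0$ differs ($\Psi_{\hat\G}(v,0)=0$ instead of $\Phi_{\hat\G}(v,0)=1$), which leaves both the recurrence and its solution unchanged. There is no substantive obstacle here; the bound is an immediate consequence of the tree-recursive form of (\ref{eq:PhirecursionM}), together with a trivial dynamic representation of the subgraph sequence.
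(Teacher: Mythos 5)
Your argument is correct and is essentially the paper's own proof, which simply observes that the bound follows immediately from the recursive definitions: the recursion tree has branching factor at most $\Delta$ and depth $t$, giving time $\exp(O(t\log\Delta))$ with a universal constant. The recurrence $T(t)\le\Delta\,T(t-1)+O(\Delta)$ and the subgraph bookkeeping you describe only make this explicit (and any extra polynomial-in-$t$ factor from filtering deleted vertices is absorbed into the exponent), so nothing further is needed.
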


\begin{proof}
The result follows immediately from the recursive definitions of $\Phi$ and $\Psi$.
\end{proof}
The correlation decay property is formulated in the following proposition which
is proved in~\cite{BayatiGamarnikKatzNairTetali}. Let
\begin{align}\label{eq:rho}
\rho=\Big(1 -\frac{2}{\sqrt{1+\lambda \Delta} +1}\Big)^{1/2}
\end{align}

\begin{theorem}[\cite{BayatiGamarnikKatzNairTetali}]\label{theorem:CorrelationDecayM}
For every  $\Delta\ge 2,\lambda>0$,
for every graph $\G$ with degree at most $\Delta$ and every node $v$
\begin{align}\label{eq:UpperLowerM}
|\log\Phi_{\G}(v,t)-\log\Psi_{\G}(v,t)|\le \rho^t\log(1+\lambda\Delta).
\end{align}
As a consequence, every graph $\G$ satisfies exponential SSM for all $\lambda>0$.
\end{theorem}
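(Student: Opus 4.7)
The proof is a correlation-decay statement on the recursion (\ref{eq:PhirecursionM}), and the plan is to establish it by the now-standard potential-function / one-step contraction argument, as carried out in \cite{BayatiGamarnikKatzNairTetali}. The starting point is Lemma~\ref{prop:UpperLowerM}, which together with the recursion shows that $\Phi_\G(v,t)$ and $\Psi_\G(v,t)$ sandwich the same target quantity $\pr_\G(v\notin\mbM)$ from opposite sides on consecutive levels. Thus it suffices to produce a geometric contraction for the gap $|\log\Phi_\G(v,t)-\log\Psi_\G(v,t)|$, with base-case estimate at $t=1$: there $\Phi(v,1)=(1+\lambda k)^{-1}$ and $\Psi(v,1)=1$ (where $k=|N(v)|\le\Delta$), so the gap is at most $\log(1+\lambda\Delta)$.

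To exhibit the per-step contraction, I would view one step of (\ref{eq:PhirecursionM}) as the map $F(x_1,\ldots,x_k) = (1+\lambda\sum_i x_i)^{-1}$ acting on $x_i\in[(1+\lambda\Delta)^{-1},1]$, $k\le\Delta$, and introduce an increasing change of variable $\phi:(0,1]\to \R$ under which the map $(y_1,\ldots,y_k)\mapsto \phi\bigl(F(\phi^{-1}(y_1),\ldots,\phi^{-1}(y_k))\bigr)$ is $\ell_\infty$-contractive with modulus $\rho$ from (\ref{eq:rho}). A short calculation shows that the worst-case sum of absolute partial derivatives of this induced map is maximized when all $x_i$ equal the stationary point $x^*$ of the iteration, defined by $x^* = 1/(1+\lambda k x^*)$. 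With the correct choice of $\phi$ (the one that equalizes the contribution of each coordinate at the stationary point), that worst case evaluates exactly to $\rho^2 = (\sqrt{1+\lambda\Delta}-1)/(\sqrt{1+\lambda\Delta}+1)$, and maximizing over $k\le\Delta$ pins down the constant in (\ref{eq:rho}). Iterating and transforming back via the bi-Lipschitz behaviour of $\phi$ on $[(1+\lambda\Delta)^{-1},1]$ produces the bound $\rho^{\,t}\log(1+\lambda\Delta)$ on the log-scale gap.

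For the exponential SSM consequence, I would observe that $\Phi_\G(v,t)$ and $\Psi_\G(v,t)$ depend only on the $t$-neighbourhood of $v$, and that conditioning on any spin assignment to edges lying at distance greater than $t$ does not alter the two bracketing quantities, while Lemma~\ref{prop:UpperLowerM} guarantees that the resulting conditional marginal $\pr(v\notin\mbM\mid\cdot)$ remains sandwiched between $\Psi_\G(v,t)$ and $\Phi_\G(v,t)$. Combined with the uniform lower bound $\pr(v\notin\mbM\mid\cdot)\ge (1+\lambda\Delta)^{-1}$, this converts the $\log$-scale estimate (\ref{eq:UpperLowerM}) into an $O(\rho^t)$ bound on the total-variation difference of two conditional marginals with differing boundary conditions, which is the required exponential SSM; a monomer-dimer analogue of Proposition~\ref{prop:hardcorereduction}, proved by the same spatial Markov computation, handles the reduction of conditioning on fixed edge-spins to operating on a smaller induced graph.

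The main obstacle is the second step: identifying the correct potential function $\phi$ and carrying out the extremal calculation that isolates the worst-case contraction constant as $\rho$. Once $\phi$ is chosen, verifying that the sum of partial derivatives is maximized at the common stationary point is a standard convexity argument and the extremal value simplifies to the announced $\rho$; the creative content is finding $\phi$, which is exactly what distinguishes the monomer-dimer analysis (where SSM holds for \emph{all} $\lambda>0$) from the hard-core analysis of Theorem~\ref{theorem:CorrelationDecayIS} (where SSM holds only under (\ref{eq:lambdaBound})).
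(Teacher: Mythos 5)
Your overall architecture (base gap $\log(1+\lambda\Delta)$ at $t=1$, a per-level contraction for the recursion (\ref{eq:PhirecursionM}), and the SSM consequence via the fact that $\Phi_{\G}(v,t),\Psi_{\G}(v,t)$ only depend on the $t$-ball together with the sandwich of Lemma~\ref{prop:UpperLowerM} and Proposition~\ref{prop:matchingsreduction}) is sound, and note that the paper itself does not reprove this theorem but imports it from \cite{BayatiGamarnikKatzNairTetali}. However, your central quantitative step is wrong as stated. You claim a \emph{one-step} potential-function contraction whose worst case is attained when all inputs sit at the stationary point $x^*=1/(1+\lambda k x^*)$ and there evaluates to $\rho^2=(\sqrt{1+\lambda\Delta}-1)/(\sqrt{1+\lambda\Delta}+1)$. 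This cannot be: at the fixed point the amortized derivative is independent of the potential, since the factors $\phi'$ cancel, and it equals $\sum_i|\phi'(x^*)|\lambda (x^*)^2/|\phi'(x^*)|=\lambda k (x^*)^2=1-x^*$. For $\lambda\Delta=3$ this is $\approx 0.566$ while $\rho^2=1/3$, so no choice of $\phi$ yields a uniform one-step modulus $\rho^2$; the best any potential can achieve is at least $1-x^*$, which lies between $\rho^2$ and $\rho$. Moreover the claim that the worst case sits at the stationary point is false for the natural choice $\phi=\log$ (there the one-step factor is $1-f$, maximized at the boundary where all $x_i=1$, giving $\lambda\Delta/(1+\lambda\Delta)$), so the asserted ``standard convexity argument'' does not exist in the form you describe. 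Your write-up is also internally inconsistent: a modulus $\rho$ is announced, a worst case $\rho^2$ is computed, and the conclusion $\rho^t$ is drawn; a genuine one-step modulus $\rho^2$ would give $\rho^{2t}$, which is stronger than the theorem and is ruled out by the fixed-point linearization above.

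The argument of \cite{BayatiGamarnikKatzNairTetali} that you are trying to reconstruct works differently: it stays in the plain $\log$ variable, where the one-step Lipschitz factor at a node whose value is $f$ is $1-f$, and it analyzes \emph{two consecutive levels}. Along the worst child with value $x$ one has $f\ge (1+\lambda\Delta x)^{-1}$, and maximizing $(1-f)(1-x)\le \frac{\lambda\Delta x}{1+\lambda\Delta x}(1-x)$ over $x\in[0,1]$ gives exactly $\frac{(\sqrt{1+\lambda\Delta}-1)^2}{\lambda\Delta}=\rho^2$; iterating the two-level contraction and combining with the depth-one gap $\log(1+\lambda k)\le\log(1+\lambda\Delta)$ yields (\ref{eq:UpperLowerM}). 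So the missing idea in your proposal is precisely this two-level (amortized over a path of length two) estimate; without it, the single-step potential route you sketch cannot reach the constant $\rho$ of (\ref{eq:rho}), and the key extremal computation you defer to is not merely unfinished but, at the stationary point, provably gives a larger value than you assert. The first and third paragraphs of your proposal (base case, locality, and the conversion of the log-scale bound into exponential SSM using the uniform lower bound $(1+\lambda\Delta)^{-1}$ and the matching analogue of Proposition~\ref{prop:hardcorereduction}) are fine and match the intended use of the theorem in the paper.
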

We now state the analogues of Proposition~\ref{prop:hardcorereduction} and Corollary~\ref{corollary:IS}.
The proofs are very similar and omitted.
Given any set $W\subset E$, let $N(W)=W\cup\{e:\exists e'\in W, e\sim e'\}$.
\begin{prop}\label{prop:matchingsreduction}
Given a graph $\G=(V,E)$,
for every mutually exclusive sets $A,W_1,W_2\subset E$ and spin assignment $(s_e), e\in A$ on $A$
the following identity holds
with respect to the unique Gibbs measure.
\begin{align}\label{eq:reductionM}
\pr_{\G}(1\{e\in \mbM\}&=s_e, \forall e\in A|W_1\cap \mbM=\emptyset,W_2\subset \mbM)=\notag\\
&=\pr_{\hat\G}(1\{e\in \mbM\}=s_e, \forall e\in A),
\end{align}
where $\hat\G$ is the subgraph obtained from $\G$ by removing edges $W_1\cup N(W_2)$.
\end{prop}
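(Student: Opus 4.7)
The plan is to follow the template of the proof of Proposition~\ref{prop:hardcorereduction}, adapting the node-based combinatorics to the edge-based combinatorics of the monomer-dimer model. First I would reduce to a finite-volume statement by the spatial Markovian property: fix a large integer $r$ and a spin assignment $\mathcal{S} = (s_e)_{e \in \partial B_r}$ on the boundary edges of a ball $B_r$ containing $A \cup W_1 \cup W_2$, consistent with the constraints $W_1 \cap \mbM = \emptyset$ and $W_2 \subset \mbM$. Since exponential SSM holds for the monomer-dimer model by Theorem~\ref{theorem:CorrelationDecayM}, the Gibbs measure is unique, and both sides of (\ref{eq:reductionM}) will be recovered as $r \to \infty$ independently of the choice of $\mathcal{S}$.

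Next I would establish the identity in the finite-volume conditional form. Using the partition-function expression (\ref{eq:PartitionFunctionM}), the conditional probability on the left becomes
\begin{align*}
\frac{\sum_{M \in \mathcal{M}_1} \lambda^{|M|}}{\sum_{M \in \mathcal{M}_2} \lambda^{|M|}},
\end{align*}
where $\mathcal{M}_2$ is the set of matchings in $B_r$ that contain $W_2$, avoid $W_1$, and are consistent with $\mathcal{S}$ on $\partial B_r$; and $\mathcal{M}_1 \subset \mathcal{M}_2$ imposes additionally $1\{e \in M\} = s_e$ for $e \in A$. The key bijection is $M \mapsto M \setminus W_2$, mapping $\mathcal{M}_i$ onto the analogous set $\mathcal{M}'_i$ of matchings in $\hat B_r := B_r \setminus (W_1 \cup N(W_2))$: any matching containing $W_2$ automatically uses no edge in $N(W_2) \setminus W_2$, and conversely appending $W_2$ to any matching in $\hat B_r$ yields a valid matching in $B_r$. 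Since $|M| = |M \setminus W_2| + |W_2|$, the factor $\lambda^{|W_2|}$ pulls out of numerator and denominator and cancels, yielding the ratio $\sum_{M' \in \mathcal{M}'_1} \lambda^{|M'|} / \sum_{M' \in \mathcal{M}'_2} \lambda^{|M'|}$, which is exactly $\pr_{\hat B_r}(1\{e \in \mbM\} = s_e, \forall e \in A \mid \mathcal{S})$.

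Finally I would pass to the limit $r \to \infty$. By exponential SSM applied to both $\G$ and $\hat\G$ (the subgraph relation is preserved by SSM via an analog of the first part of Proposition~\ref{prop:hardcorereduction}, which also follows directly from Theorem~\ref{theorem:CorrelationDecayM} since the bounds depend only on the maximum degree), the dependence on $\mathcal{S}$ vanishes uniformly, and both sides converge to their infinite-volume counterparts, giving (\ref{eq:reductionM}).

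The main obstacle is the careful bookkeeping of the bijection, particularly verifying that the images land in the correct reduced graph and that the spin constraints on $A$ translate correctly: one must check that $A$ remains a subset of the edge set of $\hat\G$, which uses the implicit compatibility of $A$ with the conditioning (if some $e \in A$ were incident to a dimer in $W_2$, the only consistent value would be $s_e = 0$, and such $e$ would be absent from $\hat\G$ so that the event holds trivially on both sides). A minor additional point is reformulating the spatial Markov property for the line-graph interpretation of the edge model so that conditioning on spins of boundary edges (rather than boundary nodes) is legitimate; this is routine given the equivalence noted after (\ref{eq:GibbsEdges}).
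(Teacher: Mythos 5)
Your proposal is correct and is essentially the argument the paper intends: it omits the proof as ``very similar'' to that of Proposition~\ref{prop:hardcorereduction}, and your adaptation --- conditioning on a boundary assignment $\mathcal{S}$, cancelling the factor $\lambda^{|W_2|}$ via the bijection $M\mapsto M\setminus W_2$ onto matchings of the graph with $W_1\cup N(W_2)$ removed, and letting $r\to\infty$ using the exponential SSM guaranteed by Theorem~\ref{theorem:CorrelationDecayM} --- is exactly that adaptation. Your remarks on edges of $A$ falling in $N(W_2)$ and on the line-graph reformulation of the spatial Markov property are the right bookkeeping points and raise no gap.
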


We use notations $\mathcal{P}(d,\lambda)$ and $s\mathcal{P}(d,\lambda)$ for the free energy and the surface pressure
for the monomer-dimer model on $\Z^d$ as well. As a corollary of Theorem~\ref{theorem:MainResultGeneralEdges},
and Proposition~\ref{prop:matchingsreduction} we obtain
\begin{coro}\label{corollary:M}
For every $\lambda>0$ and $d$
\begin{align}\label{eq:PressureM1}
\mathcal{P}(d,\lambda)=-\log \pr_{\Z^d_{\prec 0}}(0\notin\mbM)=
-{1\over 2}\log \pr_{\Z^d_{\prec 0,\even}}(0\notin\mbM)
\end{align}
\ignore{
Also for every $(a_i)\in \R_{>0}^d$
\begin{align}\label{eq:SurfacePressureIS1}
s\mathcal{P}(d,\lambda)&=
\sum_{1\le j\le d}a_j^{-1}\sum_{k=0}^\infty\Big(2\log \pr_{\Z^d_{\prec 0}}(0\notin\mbM)-
\log \pr_{\Z^d_{j,k,+,\prec 0}}(0\notin\mbM)- \\
&-\log \pr_{\Z^d_{j,-k,-,\prec 0}}(0\notin\mbM)\Big) \notag.
\end{align}
}
\end{coro}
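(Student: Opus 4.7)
The plan is to derive the identity (\ref{eq:PressureM1}) directly from Theorem~\ref{theorem:MainResultGeneralEdges} by choosing the spin $s^* = 0$, exploiting the fact that the monomer--dimer Hamiltonian is trivial on the $(0,0)$ pair, and then rewriting the resulting conditional marginal probabilities as unconditional marginals on a suitable subgraph via Proposition~\ref{prop:matchingsreduction}.

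First I would verify the hypotheses of Theorem~\ref{theorem:MainResultGeneralEdges}. Take $s^* = 0$: Assumption~\ref{assumption:s*} holds because $H(0,0) = H(0,1) = 0 < \infty$, and exponential SSM on $\Z^d$ for every $\lambda > 0$ is supplied by Theorem~\ref{theorem:CorrelationDecayM}, which gives SSM in particular. The critical simplification is that $H(s^*,s^*) = H(0,0) = 0$ and $h(s^*) = h(0) = 0$, so the additive correction terms $-d(2d-1)H(s^*,s^*) - dh(s^*)$ in (\ref{eq:PressureGeneralEdges}) and (\ref{eq:PressureGeneralEdgesChessPattern}) both vanish.

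Next I would translate the spin events into matching language. The event $\{\sigma_{(0,v)} = 0 \text{ for all } v \in N(0)\}$ says no edge incident to $0$ belongs to $\mbM$, i.e.\ $\{0 \notin \mbM\}$. The conditioning event $\mathcal{E}_{0,\text{edges}}$ prescribes $\sigma_e = 0$ on every edge $e$ with at least one endpoint $u \succ 0$; letting $W_1$ be the set of all such edges, this is the event $W_1 \cap \mbM = \emptyset$. Applying Proposition~\ref{prop:matchingsreduction} with $A = \{(0,v) : v \in N(0)\}$, all $s_e = 0$, $W_2 = \emptyset$, and $W_1$ as above, the conditional probability collapses to $\pr_{\hat\G}(0 \notin \mbM)$, where $\hat\G$ is obtained from $\Z^d$ by deleting $W_1$. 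Since the edges of $\Z^d$ with \emph{no} endpoint $\succ 0$ are precisely the edges induced on $\Z^d_{\prec 0}$, we have $\hat\G = \Z^d_{\prec 0}$, yielding $\mathcal{P}(d,\lambda) = -\log \pr_{\Z^d_{\prec 0}}(0 \notin \mbM)$.

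For the chess-pattern identity the same template applies, but now $\mathcal{E}_{0,\text{edges},\even}$ fixes $\sigma_e = 0$ only on edges $(u,w)$ with $u \succ 0$ and $u \in \Z^d_{\even}$. The step that I expect to require the most care is identifying the resulting subgraph with $\Z^d_{\prec 0,\even}$. Since $\Z^d$ is bipartite with bipartition $(\Z^d_{\even}, \Z^d_{\odd})$, every edge has a unique even endpoint; after removing the edges whose even endpoint is $\succ 0$, the surviving edges are exactly those whose even endpoint lies in $\Z^d_{\prec 0}$. On the other hand, by the definition $\Z^d_{\prec 0,\even} = \Z^d_{\prec 0} \cup \{u \in \Z^d_{\odd} : 0 \prec u\}$, every odd node of $\Z^d$ belongs to $\Z^d_{\prec 0,\even}$, while the only even nodes in $\Z^d_{\prec 0,\even}$ are those in $\Z^d_{\prec 0}$. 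Hence an edge is induced on $\Z^d_{\prec 0,\even}$ iff its even endpoint is $\preceq 0$, matching the surviving edge set. Proposition~\ref{prop:matchingsreduction} then gives $\pr_{\Z^d_{\prec 0,\even}}(0 \notin \mbM)$, and the factor $1/2$ from (\ref{eq:PressureGeneralEdgesChessPattern}) delivers the second equality in (\ref{eq:PressureM1}).
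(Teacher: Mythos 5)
Your proposal is correct and follows exactly the route the paper intends: specialize Theorem~\ref{theorem:MainResultGeneralEdges} to $s^*=0$ (so the $H$ and $h$ correction terms vanish, with SSM supplied by Theorem~\ref{theorem:CorrelationDecayM}) and then use Proposition~\ref{prop:matchingsreduction} to replace the conditional marginals by unconditional ones on $\Z^d_{\prec 0}$ and $\Z^d_{\prec 0,\even}$. Your careful identification of the surviving edge sets with the induced subgraphs, including the bipartite argument for the chess-pattern case, fills in details the paper leaves implicit but is the same argument.
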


\subsection{Free energy and surface pressure on $\Z^d$. Numerical results}
We now obtain bounds  on the free energy and surface pressure for the monomer-dimer
model on $\Z^d$.
Again denote by $\Phi(t)$ and $\Psi(t)$ the values of $\Phi_{\G}(v,t),\Psi_{\G}(v,t)$ when applied
to any graph $\G=\Z^d_{\prec 0}\cap B_n$ in the monomer-dimer context, for sufficiently large $n$ and $v=0$.
The relations (\ref{eq:UpperLowerISLattice1}) and (\ref{eq:UpperLowerISLattice2}) hold as well and the proof
is very similar.

Our algorithm for computing $\mathcal{P}(d,\lambda)$ and $s\mathcal{P}(\Z^d,\lambda)$
is again based on computing $\Phi(t)$ and $\Psi(t)$ and is again called
\emph{Sequential Cavity Algorithm} (SCA).
We now report numerical results on computing $\mathcal{P}(d,\lambda)$.
We have computed values $\Phi(t),\Psi(t)$  for  $d=2,3,4$ and a range of values $\lambda$
using the chessboard pattern method. Our upper and lower bounds for free energy are presented in
Table~\ref{table:pressureMatchings}.
The depth levels $t=14,9,7$ were used for the cases $d=2,3,4$. As expected, our bounds are high quality
for lower $\lambda$ and then degrade as $\lambda\rightarrow\infty$. Each computation run took about
3 minutes on a workstation and we have not made an attempt to obtain very accurate bounds for each value $\lambda$.
However for the case of interest $\lambda=1$ we ran our algorithm for larger depths.
For the case $d=2$ a very accurate rigorous estimate $0.6627989727\pm 0.0000000001$
is due to Friedland and Peled~\cite{FriedlandPeled},
shown non-rigorously earlier by Baxter~\cite{Baxter68}. We have not made an attempt to improve this bound.
However for the case $d=3$ we can significantly improve the best known bound
$0.7850\le  \mathcal{P}(3,1) \le 0.7863$ due to Friedland
et al~\cite{FriedlandPeled},\cite{FriedlandGurvits},\cite{FriedlandKropLundowMarkstrom}. At depth
 $t=19$ we obtained
estimates $0.78595\le \mathcal{P}(3,1) \le 0.78599$, which is two orders of magnitude improvement.
One should also note that the lower bounds $0.7845$ in~\cite{FriedlandGurvits} and $0.7850$ in~\cite{FriedlandKropLundowMarkstrom}
were obtained using Friedland-Tveberg inequality which
provides a bound for general regular graphs. Thus, while highly accurate for the case $\Z^3$, this bound
is not improvable by running some numerical procedure longer or on a faster machine.
The previous best known \emph{numerical}
estimate $\mathcal{P}(3,1)\ge 0.7653$~\cite{FriedlandPeled}, is based on the transfer matrix method is weaker.
We have also obtained bounds for $d=4$ for which no prior computations are available. We obtained
$0.8797\le  \mathcal{P}(4,1) \le 0.8812$. The computations were done at depth $t=14$.

\ignore{
\begin{table}
\begin{center}
\begin{tabular}{|c|c|c|}
\hline
$\lambda$ & $d=2$ lower & $d=2$ upper \\
  \hline
  \hline
    0.1000 & 2.3219  &  2.3219 \\
    0.2000 & 1.6802 & 1.6802 \\
    0.3000 & 1.3451  &  1.3451\\
    0.4000 & 1.1361  &  1.1361 \\
    0.5000 & 0.9934  &  0.9934\\
    0.6000 & 0.8902 & 0.8902\\
    0.7000 & 0.8122 & 0.8122\\
    0.8000 & 0.7513  &  0.7513\\
    0.9000 & 0.7026  &  0.7026\\
    1.0000 & 0.6628  &  0.6628\\
    1.5000 & 0.5389  &  0.5392\\
    2.0000 & 0.4742 & 0.4758\\
    3.0000 & 0.4056  &  0.4139\\
    4.0000 & 0.3665  &  0.3855\\
    5.0000 & 0.3390 & 0.3704\\
    6.0000 & 0.3174  &  0.3614\\
    7.0000 & 0.2993  &  0.3558\\
    8.0000 & 0.2837  &  0.3519\\
    9.0000 & 0.2697  &  0.3493\\
   10.0000 & 0.2570 & 0.3473\\
   15.0000 & 0.2064  &  0.3426\\
   20.0000 & 0.1685  &  0.3409\\
   30.0000 & 0.1151  &  0.3397\\
   40.0000 & 0.0811  &  0.3393\\
   50.0000 & 0.0592 & 0.3391\\
 \hline
\end{tabular}
\caption{}
\end{center}
\end{table}
}

\begin{table}
\begin{center}
\begin{tabular}{|c|c|c|c|c|c|c|}
\hline
$\lambda$ & $d=2$ L & $d=2$ U & $d=3$ L & $d=3$ U & $d=4$ L & $d=4$ U \\
  \hline
  \hline
    0.1000 & 2.3219 & 2.3219 & 2.3311 & 2.3311 & 2.3399 & 2.3399 \\
    0.2000 & 1.6802 & 1.6802 & 1.7096 & 1.7096 & 1.7363 & 1.7363 \\
    0.3000 & 1.3451 & 1.3451 & 1.3959 & 1.3959 & 1.4395 & 1.4395 \\
    0.4000 & 1.1361 & 1.1361 & 1.2050 & 1.2050 & 1.2621 & 1.2624 \\
    0.5000 & 0.9934 & 0.9934 & 1.0770 & 1.0770 & 1.1442 & 1.1453 \\
    0.6000 & 0.8902 & 0.8902 & 0.9853 & 0.9855 & 1.0599 & 1.0629 \\
    0.7000 & 0.8122 & 0.8122 & 0.9164 & 0.9171 & 0.9961 & 1.0025 \\
    0.8000 & 0.7513 & 0.7513 & 0.8627 & 0.8642 & 0.9457 & 0.9568 \\
    0.9000 & 0.7026 & 0.7026 & 0.8196 & 0.8224 & 0.9044 & 0.9215 \\
    1.0000 & 0.6628 & 0.6628 & 0.7840 & 0.7887 & 0.8695 & 0.8937 \\
    1.5000 & 0.5389 & 0.5392 & 0.6677 & 0.6890 & 0.7475 & 0.8163 \\
    2.0000 & 0.4742 & 0.4758 & 0.5982 & 0.6436 & 0.6671 & 0.7840 \\
    3.0000 & 0.4056 & 0.4139 & 0.5079 & 0.6055 & 0.5564 & 0.7586 \\
    4.0000 & 0.3665 & 0.3855 & 0.4455 & 0.5908 & 0.4793 & 0.7492 \\
    5.0000 & 0.3390 & 0.3704 & 0.3972 & 0.5836 & 0.4208 & 0.7448 \\
    6.0000 & 0.3174 & 0.3614 & 0.3575 & 0.5797 & 0.3740 & 0.7423 \\
    7.0000 & 0.2993 & 0.3558 & 0.3239 & 0.5772 & 0.3354 & 0.7408 \\
    8.0000 & 0.2837 & 0.3519 & 0.2947 & 0.5757 & 0.3026 & 0.7399 \\
    9.0000 & 0.2697 & 0.3493 & 0.2691 & 0.5746 & 0.2743 & 0.7392 \\
   10.0000 & 0.2570 & 0.3473 & 0.2465 & 0.5738 & 0.2496 & 0.7387 \\
   15.0000 & 0.2064 & 0.3426 & 0.1643 & 0.5719 & 0.1621 & 0.7376 \\
   20.0000 & 0.1685 & 0.3409 & 0.1147 & 0.5713 & 0.1110 & 0.7372 \\
   30.0000 & 0.1151 & 0.3397 & 0.0627 & 0.5708 & 0.0592 & 0.7369 \\
   40.0000 & 0.0811 & 0.3393 & 0.0386 & 0.5706 & 0.0359 & 0.7368 \\
   50.0000 & 0.0592 & 0.3391 & 0.0259 & 0.5705 & 0.0239 & 0.7368 \\
   \hline
\end{tabular}
\caption{Upper (U) and lower (L) bounds on free energy for $d=2,3,4$}\label{table:pressureMatchings}
\end{center}
\end{table}

The following proposition gives a bound on the numerical complexity of SCA.
\begin{prop}\label{prop:ComputationalComplexityM}
For every $d\ge 2,\lambda>0, \epsilon>0$   SCA produces an $\epsilon$-additive
estimate of $\mathcal{P}(d,\lambda)$ and $s\mathcal{P}(d,\lambda,a)$ in time
$\log(1+2\lambda d)(1/\epsilon)^{O\big((\lambda d)^{1\over 2}\log d\big)}$,
where the constant in $O(\cdot)$ is universal.
\end{prop}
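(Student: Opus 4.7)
The plan is to follow the outline of Proposition \ref{prop:ComputationalComplexityIS}, replacing the hard-core correlation-decay bound with the monomer-dimer bound of Theorem \ref{theorem:CorrelationDecayM} applied with $\Delta = 2d$. First I would invoke Corollary \ref{corollary:M} together with Lemma \ref{prop:UpperLowerM} to reduce the task of producing an $\epsilon$-additive estimate of $\mathcal{P}(d,\lambda)$ to that of running SCA on $\Z^d_{\prec 0}\cap B_n$ (for any $n > t$) at a depth $t$ for which the gap $|\log\Phi(t)-\log\Psi(t)|$ falls below $\epsilon$. By Theorem \ref{theorem:CorrelationDecayM}, this gap is at most $\rho^t \log(1+2\lambda d)$ with $\rho$ given by \eqref{eq:rho}.

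The key calculation is to estimate $\log(1/\rho)$. Setting $x = \sqrt{1+2\lambda d}$, one has $\rho^2 = (x-1)/(x+1)$ and hence $\log(1/\rho) = \tfrac{1}{2}\log((x+1)/(x-1)) \ge 1/(x+1)$, so that $1/\log(1/\rho) = O(\sqrt{\lambda d})$. It therefore suffices to take $t = O(\sqrt{\lambda d}\,\log(\log(1+2\lambda d)/\epsilon))$. By Lemma \ref{lemma:ComputationalComplexityMGeneral} applied with degree bound $2d$, each evaluation of $\Phi(t), \Psi(t)$ costs $\exp(O(t\log d))$ operations. Substituting the bound on $t$ and absorbing the subdominant $\log\log$ prefactor into constants yields a running time of the form $\log(1+2\lambda d)(1/\epsilon)^{O(\sqrt{\lambda d}\log d)}$, which is the claim for the free energy.

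For the surface pressure I would mimic the second half of the proof of Proposition \ref{prop:ComputationalComplexityIS}. Since Theorem \ref{theorem:CorrelationDecayM} in particular asserts exponential SSM at rate $\gamma = \log(1/\rho)$, the infinite series in \eqref{eq:SurfacePressureGeneral} may be truncated after $k_0 = O(\gamma^{-1}\log(1/\epsilon)) = O(\sqrt{\lambda d}\,\log(1/\epsilon))$ terms with total truncation error $O(\epsilon)$. Each retained term is a marginal probability on one of the half-spaces $\Z^d_{j,k,+}$ or $\Z^d_{j,-k,-}$, to which SCA applies in exactly the same way; computing it to additive accuracy $\epsilon/k_0$ requires depth $O(\sqrt{\lambda d}\,\log(k_0/\epsilon)) = O(\sqrt{\lambda d}\,\log(1/\epsilon))$, matching the cost of the free-energy computation. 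Summing over $k_0$ retained terms keeps the total running time within the stated bound.

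The main obstacle is the bookkeeping of constants, specifically verifying that $1/\log(1/\rho) = O(\sqrt{1+\lambda d})$ uniformly across the regime of interest: the estimate $\log((x+1)/(x-1)) \ge 2/(x+1)$ is tight for large $x$ but loose for $x$ close to $1$, so one has to handle small and large $\lambda d$ separately. In both regimes the same order bound on $t$ results, which suffices. A minor additional point is that the SSM rate extracted from Theorem \ref{theorem:CorrelationDecayM} is precisely $\log(1/\rho)$ (up to a multiplicative constant absorbed into the prefactor of $\log(1+2\lambda d)$), so that the same $\gamma$ governs both the SCA depth and the truncation of the surface-pressure series; this ties the two parts of the argument together cleanly.
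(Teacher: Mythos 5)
Your proposal is correct and follows essentially the same route as the paper: apply Theorem~\ref{theorem:CorrelationDecayM} with $\Delta=2d$ to get the depth requirement $t\ge(-\log\rho)^{-1}\big(\log(1/\epsilon)+\log\log(1+2\lambda d)\big)$, bound $(-\log\rho)^{-1}=O(\sqrt{\lambda d})$ from (\ref{eq:rho}), combine with Lemma~\ref{lemma:ComputationalComplexityMGeneral}, and for the surface pressure truncate the series and reuse the argument of Proposition~\ref{prop:ComputationalComplexityIS}. Your explicit estimate $\log(1/\rho)=\tfrac12\log\big((x+1)/(x-1)\big)\ge 1/(x+1)$ with $x=\sqrt{1+2\lambda d}$ simply makes precise the step the paper states without detail.
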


Again for constant $\lambda,d$ we obtain performance $(1/\epsilon)^{O(1)}$, which is
a qualitative improvement over the numerical effort $\exp(O((1/\epsilon)^{d-1}))$ of the
transfer matrix method.

\begin{proof}
We have $\Delta=2d$. Applying Theorem~\ref{theorem:CorrelationDecayM}, an additive error $\epsilon$ is achieved
provided that $\rho^t\log(1+2\lambda d)<\epsilon$ or
\begin{align*}
t\ge (-\log \rho)^{-1}\Big(\log{1\over \epsilon}+\log\log(1+2\lambda d)\Big).
\end{align*}
Applying (\ref{eq:rho}) we have $\log\rho=O(\frac{1}{\sqrt{1+2\lambda d} +1})
=O(\frac{1}{\sqrt{\lambda d} })$. The result for $\mathcal{P}(d,\lambda)$ then follows from this estimate and
Lemma~\ref{lemma:ComputationalComplexityMGeneral}. The result for $s\mathcal{P}(d,\lambda)$ is established
using the same line of reasoning as for Proposition~\ref{prop:ComputationalComplexityIS}.
\end{proof}


\section{Conclusions}\label{section:conclusions}
Several statistical mechanics models besides hard-core and monomer-dimer models
fit our framework, yet were not discussed in this paper. One such model
is Ising model and its generalization, Potts (coloring) model~\cite{SimonLatticeGases}.
The Ising model satisfies our Assumption~\ref{assumption:s*}
by making $s^*=1$ or $-1$. Thus the representation Theorem~\ref{theorem:MainResultGeneral} holds,
provided (exponential) SSM holds.
However, in order to turn it into a useful method for obtaining provably converging bounds, we need an analogue
of Theorems~\ref{theorem:CorrelationDecayIS} and \ref{theorem:CorrelationDecayM}, namely the correlation decay
property on a computation tree. Such a result is indeed established in \cite{GamarnikKatz}, but
for an Ising model with very weak interactions. Thus it is of interest to strengthen the
result in~\cite{GamarnikKatz} and obtain some concrete estimates for the Ising model on $\Z^d$. We are not aware
of earlier benchmark results for this model. The situation with the Potts model is similar with the exception
of hard-core Potts model (proper coloring). In this case the required $s^*$ does not exist, as for every
color $s, H(s,s)=\infty$. However, our method can be extended by considering for example a periodic coloring
of nodes $v\succ 0$ or using the chess-pattern version Theorem~\ref{theorem:MainResultGeneralChessPattern} of
our main result. The required correlation decay result is established in~\cite{GamarnikKatz} for the
case $q>2.86\Delta$ and $q,\Delta$ appropriately large constants. Thus in order to turn this result
into a useful method for computing free energy and surface pressure for coloring
model on $\Z^d$, one needs to deal with these constants
explicitly. Also a convenient monotonicity present in the hard-core and monomer-dimer models is lost
in the Potts model case, which makes  application of Corollary~\ref{coro:MainResultUpperLower} harder.

\section*{Acknowledgements}
The first author wishes to thanks I. Sinai for an inspiring conversation, R. Pemantle and Fa. Y. Wu for providing
several relevant references, S. Friedland for updating us on the state of the art numerical estimates,
J. Propp and Domino server participants for responding to some
of our questions, and F. Martinelli and C. Borgs for enlightening discussions on Strong Spatial Mixing.

\bibliographystyle{amsalpha}

\newcommand{\etalchar}[1]{$^{#1}$}
\providecommand{\bysame}{\leavevmode\hbox to3em{\hrulefill}\thinspace}
\providecommand{\MR}{\relax\ifhmode\unskip\space\fi MR }
\providecommand{\MRhref}[2]{%
  \href{http://www.ams.org/mathscinet-getitem?mr=#1}{#2}
}
\providecommand{\href}[2]{#2}

\end{document}